\begin{document}


\newtheorem{thm}{Theorem}[section]
\newtheorem{lem}[thm]{Lemma}
\newtheorem{cor}[thm]{Corollary}
\newtheorem{pro}[thm]{Proposition}
\theoremstyle{definition}
\newtheorem{defi}[thm]{Definition}
\newtheorem{ex}[thm]{Example}
\newtheorem{rmk}[thm]{Remark}
\newtheorem{pdef}[thm]{Proposition-Definition}
\newtheorem{condition}[thm]{Condition}

\renewcommand{\labelenumi}{{\rm(\alph{enumi})}}
\renewcommand{\theenumi}{\alph{enumi}}

\newcommand {\emptycomment}[1]{} 

\newcommand{\nc}{\newcommand}
\newcommand{\delete}[1]{}
\newcommand{\wuhao}{\fontsize{10pt}{\baselineskip}\selectfont}
\nc{\todo}[1]{\tred{To do:} #1}

\nc{\tred}[1]{\textcolor{red}{#1}}
\nc{\tblue}[1]{\textcolor{blue}{#1}}
\nc{\tgreen}[1]{\textcolor{green}{#1}}
\nc{\tpurple}[1]{\textcolor{purple}{#1}}
\nc{\tgray}[1]{\textcolor{gray}{#1}}
\nc{\torg}[1]{\textcolor{orange}{#1}}
\nc{\tmag}[1]{\textcolor{magenta}}
\nc{\btred}[1]{\textcolor{red}{\bf #1}}
\nc{\btblue}[1]{\textcolor{blue}{\bf #1}}
\nc{\btgreen}[1]{\textcolor{green}{\bf #1}}
\nc{\btpurple}[1]{\textcolor{purple}{\bf #1}}

    \nc{\mlabel}[1]{\label{#1}}  
    \nc{\mcite}[1]{\cite{#1}}  
    \nc{\mref}[1]{\ref{#1}}  
    \nc{\meqref}[1]{\eqref{#1}}  
    \nc{\mbibitem}[1]{\bibitem{#1}} 

\delete{
    \nc{\mlabel}[1]{\label{#1}          { {\small\tgreen{\tt{{\ }(#1)}}}}}
    \nc{\mcite}[1]{\cite{#1}{\small{\tt{{\ }(#1)}}}}  
    \nc{\mref}[1]{\ref{#1}{\small{\tred{\tt{{\ }(#1)}}}}}  
    \nc{\meqref}[1]{\eqref{#1}{{\tt{{\ }(#1)}}}}  
    \nc{\mbibitem}[1]{\bibitem[\bf #1]{#1}} 
}


\nc{\cm}[1]{\textcolor{red}{Chengming:#1}}
\nc{\yy}[1]{\textcolor{blue}{Yanyong: #1}}
\nc{\li}[1]{\textcolor{purple}{#1}}
\nc{\lir}[1]{\textcolor{purple}{Li:#1}}


\nc{\tforall}{\ \ \text{for all }}
\nc{\hatot}{\,\widehat{\otimes} \,}
\nc{\complete}{completed\xspace}
\nc{\wdhat}[1]{\widehat{#1}}

\nc{\ts}{\mathfrak{p}}
\nc{\mts}{c_{(i)}\ot d_{(j)}}

\nc{\NA}{{\bf NA}}
\nc{\LA}{{\bf Lie}}
\nc{\CLA}{{\bf CLA}}

\nc{\cybe}{CYBE\xspace}
\nc{\nybe}{NYBE\xspace}
\nc{\ccybe}{CCYBE\xspace}

\nc{\ndend}{pre-Novikov\xspace}
\nc{\calb}{\mathcal{B}}
\nc{\rk}{\mathrm{r}}
\newcommand{\g}{\mathfrak g}
\newcommand{\h}{\mathfrak h}
\newcommand{\pf}{\noindent{$Proof$.}\ }
\newcommand{\frkg}{\mathfrak g}
\newcommand{\frkh}{\mathfrak h}
\newcommand{\Id}{\rm{Id}}
\newcommand{\gl}{\mathfrak {gl}}
\newcommand{\ad}{\mathrm{ad}}
\newcommand{\add}{\frka\frkd}
\newcommand{\frka}{\mathfrak a}
\newcommand{\frkb}{\mathfrak b}
\newcommand{\frkc}{\mathfrak c}
\newcommand{\frkd}{\mathfrak d}
\newcommand {\comment}[1]{{\marginpar{*}\scriptsize\textbf{Comments:} #1}}

\nc{\vspa}{\vspace{-.1cm}}
\nc{\vspb}{\vspace{-.2cm}}
\nc{\vspc}{\vspace{-.3cm}}
\nc{\vspd}{\vspace{-.4cm}}
\nc{\vspe}{\vspace{-.5cm}}


\nc{\disp}[1]{\displaystyle{#1}}
\nc{\bin}[2]{ (_{\stackrel{\scs{#1}}{\scs{#2}}})}  
\nc{\binc}[2]{ \left (\!\! \begin{array}{c} \scs{#1}\\
    \scs{#2} \end{array}\!\! \right )}  
\nc{\bincc}[2]{  \left ( {\scs{#1} \atop
    \vspace{-.5cm}\scs{#2}} \right )}  
\nc{\ot}{\otimes}
\nc{\sot}{{\scriptstyle{\ot}}}
\nc{\otm}{\overline{\ot}}
\nc{\ola}[1]{\stackrel{#1}{\la}}

\nc{\scs}[1]{\scriptstyle{#1}} \nc{\mrm}[1]{{\rm #1}}

\nc{\dirlim}{\displaystyle{\lim_{\longrightarrow}}\,}
\nc{\invlim}{\displaystyle{\lim_{\longleftarrow}}\,}

\nc{\bfk}{{\bf k}} \nc{\bfone}{{\bf 1}}
\nc{\rpr}{\circ}
\nc{\dpr}{{\tiny\diamond}}
\nc{\rprpm}{{\rpr}}

\nc{\mmbox}[1]{\mbox{\ #1\ }} \nc{\ann}{\mrm{ann}}
\nc{\Aut}{\mrm{Aut}} \nc{\can}{\mrm{can}}
\nc{\twoalg}{{two-sided algebra}\xspace}
\nc{\colim}{\mrm{colim}}
\nc{\Cont}{\mrm{Cont}} \nc{\rchar}{\mrm{char}}
\nc{\cok}{\mrm{coker}} \nc{\dtf}{{R-{\rm tf}}} \nc{\dtor}{{R-{\rm
tor}}}
\renewcommand{\det}{\mrm{det}}
\nc{\depth}{{\mrm d}}
\nc{\End}{\mrm{End}} \nc{\Ext}{\mrm{Ext}}
\nc{\Fil}{\mrm{Fil}} \nc{\Frob}{\mrm{Frob}} \nc{\Gal}{\mrm{Gal}}
\nc{\GL}{\mrm{GL}} \nc{\Hom}{\mrm{Hom}} \nc{\hsr}{\mrm{H}}
\nc{\hpol}{\mrm{HP}}  \nc{\id}{\mrm{id}} \nc{\im}{\mrm{im}}

\nc{\incl}{\mrm{incl}} \nc{\length}{\mrm{length}}
\nc{\LR}{\mrm{LR}} \nc{\mchar}{\rm char} \nc{\NC}{\mrm{NC}}
\nc{\mpart}{\mrm{part}} \nc{\pl}{\mrm{PL}}
\nc{\ql}{{\QQ_\ell}} \nc{\qp}{{\QQ_p}}
\nc{\rank}{\mrm{rank}} \nc{\rba}{\rm{RBA }} \nc{\rbas}{\rm{RBAs }}
\nc{\rbpl}{\mrm{RBPL}}
\nc{\rbw}{\rm{RBW }} \nc{\rbws}{\rm{RBWs }} \nc{\rcot}{\mrm{cot}}
\nc{\rest}{\rm{controlled}\xspace}
\nc{\rdef}{\mrm{def}} \nc{\rdiv}{{\rm div}} \nc{\rtf}{{\rm tf}}
\nc{\rtor}{{\rm tor}} \nc{\res}{\mrm{res}} \nc{\SL}{\mrm{SL}}
\nc{\Spec}{\mrm{Spec}} \nc{\tor}{\mrm{tor}} \nc{\Tr}{\mrm{Tr}}
\nc{\mtr}{\mrm{sk}}

\nc{\ab}{\mathbf{Ab}} \nc{\Alg}{\mathbf{Alg}}

\nc{\BA}{{\mathbb A}} \nc{\CC}{{\mathbb C}} \nc{\DD}{{\mathbb D}}
\nc{\EE}{{\mathbb E}} \nc{\FF}{{\mathbb F}} \nc{\GG}{{\mathbb G}}
\nc{\HH}{{\mathbb H}} \nc{\LL}{{\mathbb L}} \nc{\NN}{{\mathbb N}}
\nc{\QQ}{{\mathbb Q}} \nc{\RR}{{\mathbb R}} \nc{\BS}{{\mathbb{S}}} \nc{\TT}{{\mathbb T}}
\nc{\VV}{{\mathbb V}} \nc{\ZZ}{{\mathbb Z}}


\nc{\calao}{{\mathcal A}} \nc{\cala}{{\mathcal A}}
\nc{\calc}{{\mathcal C}} \nc{\cald}{{\mathcal D}}
\nc{\cale}{{\mathcal E}} \nc{\calf}{{\mathcal F}}
\nc{\calfr}{{{\mathcal F}^{\,r}}} \nc{\calfo}{{\mathcal F}^0}
\nc{\calfro}{{\mathcal F}^{\,r,0}} \nc{\oF}{\overline{F}}
\nc{\calg}{{\mathcal G}} \nc{\calh}{{\mathcal H}}
\nc{\cali}{{\mathcal I}} \nc{\calj}{{\mathcal J}}
\nc{\call}{{\mathcal L}} \nc{\calm}{{\mathcal M}}
\nc{\caln}{{\mathcal N}} \nc{\calo}{{\mathcal O}}
\nc{\calp}{{\mathcal P}} \nc{\calq}{{\mathcal Q}} \nc{\calr}{{\mathcal R}}
\nc{\calt}{{\mathcal T}} \nc{\caltr}{{\mathcal T}^{\,r}}
\nc{\calu}{{\mathcal U}} \nc{\calv}{{\mathcal V}}
\nc{\calw}{{\mathcal W}} \nc{\calx}{{\mathcal X}}
\nc{\CA}{\mathcal{A}}

\nc{\fraka}{{\mathfrak a}} \nc{\frakB}{{\mathfrak B}}
\nc{\frakb}{{\mathfrak b}} \nc{\frakd}{{\mathfrak d}}
\nc{\oD}{\overline{D}}
\nc{\frakF}{{\mathfrak F}} \nc{\frakg}{{\mathfrak g}}
\nc{\frakm}{{\mathfrak m}} \nc{\frakM}{{\mathfrak M}}
\nc{\frakMo}{{\mathfrak M}^0} \nc{\frakp}{{\mathfrak p}}
\nc{\frakS}{{\mathfrak S}} \nc{\frakSo}{{\mathfrak S}^0}
\nc{\fraks}{{\mathfrak s}} \nc{\os}{\overline{\fraks}}
\nc{\frakT}{{\mathfrak T}}
\nc{\oT}{\overline{T}}
\nc{\frakX}{{\mathfrak X}} \nc{\frakXo}{{\mathfrak X}^0}
\nc{\frakx}{{\mathbf x}}
\nc{\frakTx}{\frakT}      
\nc{\frakTa}{\frakT^a}        
\nc{\frakTxo}{\frakTx^0}   
\nc{\caltao}{\calt^{a,0}}   
\nc{\ox}{\overline{\frakx}} \nc{\fraky}{{\mathfrak y}}
\nc{\frakz}{{\mathfrak z}} \nc{\oX}{\overline{X}}

\font\cyr=wncyr10


\title[Deformation families of Novikov bialgebras via differential bialgebras]{
Deformation families of Novikov bialgebras via differential antisymmetric infinitesimal bialgebras}

\author{Yanyong Hong}
\address{School of Mathematics, Hangzhou Normal University,
Hangzhou 311121, PR China}
\email{yyhong@hznu.edu.cn}

\author{Chengming Bai}
\address{Chern Institute of Mathematics \& LPMC, Nankai University, Tianjin 300071, PR China}
\email{baicm@nankai.edu.cn}

\author{Li Guo}
\address{Department of Mathematics and Computer Science, Rutgers University, Newark, NJ 07102, United States}
         \email{liguo@rutgers.edu}

\subjclass[2010]{16T10, 16T25, 16S80, 17B38, 13N10, 16W99, 17A30, 17D25}
\keywords{Novikov algebra, deformation, Novikov bialgebra, Yang-Baxter
equation, differential algebra, antisymmetric infinitesimal bialgebra,  Zinbiel algebra, $\mathcal{O}$-operator}

\begin{abstract}
Generalizing S. Gelfand's classical construction of a Novikov algebra from a commutative differential algebra, a deformation family $(A,\circ_q)$, for scalars $q$, of Novikov algebras is constructed from what we call an admissible commutative differential algebra, by adding a second linear operator to the commutative differential algebra with certain admissibility condition. The case of $(A,\circ_0)$ recovers the construction of S. Gelfand.
This admissibility condition also ensures a bialgebra theory of commutative differential algebras, enriching the antisymmetric infinitesimal bialgebra. This way, a deformation family of Novikov bialgebras is obtained, under the further condition that the two operators are bialgebra derivations. As a special case, we obtain a bialgebra variation of S. Gelfand's construction with an interesting twist: every commutative and cocommutative differential antisymmetric infinitesimal bialgebra gives rise to a Novikov bialgebra whose underlying Novikov algebra is $(A,\circ_{-\frac{1}{2}})$ instead of $(A,\circ_0)$.
The close relations of the classical bialgebra theories with Manin triples, classical Yang-Baxter type equations,
$\mathcal{O}$-operators, and pre-structures are expanded to the
two new bialgebra theories, in a way that is compatible with the
just established connection between the two bialgebras. As an
application, Novikov bialgebras are obtained from admissible
differential Zinbiel algebras.
\end{abstract}

\maketitle

\vspace{-1.2cm}

\tableofcontents

\vspace{-1.4cm}

\allowdisplaybreaks

\section{Introduction}
Novikov algebras were introduced in connection with Hamiltonian
operators in the formal variational calculus~\mcite{GD1, GD2} and
Poisson brackets of hydrodynamic type~\mcite{BN}.  Moreover, there is also a correspondence
between Novikov algebras and a class of Lie conformal algebras
\mcite{X1}, which was introduced by V. Kac to give an axiomatic
description of the singular part of operator product expansion (or
rather its Fourier transform) of chiral fields in conformal field
theory \mcite{K1}. Novikov algebras are also an important subclass of pre-Lie algebras,
which are closely related to many fields in mathematics and physics such as  affine manifolds and affine
structures on Lie groups \mcite{Ko},  convex homogeneous cones \mcite{V}, deformation of
associative algebras \mcite{Ger} and vertex algebras \mcite{BK, BLP}.

A classical construction of a Novikov algebra is the one given by S. Gelfand~\mcite{GD1} from a commutative associative algebra $(A, \cdot)$ with a derivation $D$ (i.e., a commutative differential algebra): define
\vspb
\begin{equation}
a\circ b:= a\cdot D(b), \ a, b\in A.
\mlabel{eq:diffprod}
\end{equation}
Then $(A,\circ)$ is a Novikov algebra.

The notion of admissible commutative differential algebras was introduced in a recent study of a bialgebra theory for commutative differential algebras~\mcite{LLB}. It is defined to be a commutative differential algebra $(A,\cdot, D)$ equipped with another linear operator $Q$ satisfying the admissibility condition $Q(a\cdot b)=Q(a)\cdot b-a\cdot D(b)$ for all $a, b\in A$ (see Definition~\mref{de:addiff}). In this paper, we apply this admissibility condition to give a deformation family of Novikov algebras, and a deformation family of Novikov bialgebras under an additional condition.

First we show that an admissible commutative differential algebra gives a parametric family of Novikov algebras, which can be regarded as an infinitesimal deformation of the above construction of S. Gelfand (see Proposition \mref{constr2}):
\vspace{-.1cm}
{\wuhao
\begin{equation}
    \begin{split}
        \xymatrix{
            \text{admissible commutative differential algebras} \ar[r]     & \text{deformation families of Novikov algebras}} 
        \end{split}
    \mlabel{eq:algdiag}
\end{equation}
}

Our next goal is to lift this construction to the level of
bialgebras. For this purpose, we first note that the desired bialgebra theory
of admissible commutative differential algebras, built on the
existing antisymmetric infinitesimal bialgebras (ASI
bialgebras)~\mcite{A1,A2,Bai}, is already provided by the
bialgebra theory of commutative differential algebras developed
in~\mcite{LLB}. This theory works because, in addition to ensuring
the construction of a deformation family of Novikov algebras, the
above admissibility condition between $D$ and $Q$ on commutative
differential algebras, when paired with its coalgebra counterpart,
also fulfills the requirement of the bialgebra theory of
commutative differential algebras, so that the underlying
(co)commutative differential (co)algebras are admissible (see
Definition~\mref{de:admdifbialg}).

With the bialgebra structure of admissible commutative
differential algebras in place, we move on to establish its
connection with the Novikov bialgebras as developed in our recent
work \mcite{HBG} in the construction of infinite-dimensional Lie
bialgebras. Under the additional conditions that the pair of
linear operators in the admissible commutative differential
algebra are both derivations and coderivations (that is, derivations on the ASI bialgebras \cite{LLB}),  a
commutative and cocommutative differential ASI bialgebra gives
rise to a deformation family of Novikov bialgebras, enriching
Diagram \meqref{eq:algdiag} to the level of bialgebras (see
Theorem \mref{bialgebra-constr}):
\vspace{-.1cm}
{\wuhao
\begin{equation}
\begin{split}
\xymatrix{
\txt{commutative and cocommutative \\
    differential ASI bialgberas} \ar[r]     & \txt{deformation families \\ of Novikov bialgebras}}
\end{split}
    \mlabel{eq:algdiag2}
\end{equation}
}
In the special case when the deformation parameter $q$ is $-\frac{1}{2}$, a commutative and cocommutative differential ASI bialgebra {\it unconditionally} gives rise to a Novikov bialgebra. Here we observe an unexpected departure from the classical construction of S. Gelfand, in the sense that the resulting Novikov bialgebra has its underlying Novikov algebra to be the deformed Novikov algebra $(A,\circ_{-\frac{1}{2}})$, instead of the Novikov algebra $(A,\circ_0)$ from the commutative differential algebra originally constructed by S. Gelfand.

As shown in~\mcite{HBG}, the importance of Novikov bialgebras are also reflected by their characterization by Manin triples of Novikov
algebras. Furthermore, antisymmetric
solutions of Novikov Yang-Baxter equation (NYBE) naturally give
rise to Novikov bialgebras. In turn, such solutions are provided
by $\mathcal{O}$-operators on Novikov algebras. Moreover,
pre-Novikov algebras naturally provide $\mathcal{O}$-operators on the descendent Novikov algebras. These connections are depicted in the  diagram \vspace{-.1cm}
\begin{equation}
    \begin{split}
        \xymatrix{
            \text{pre-Novikov}\atop\text{ algebras} \ar[r]     & \mathcal{O}\text{-operators on}\atop\text{Novikov algebras}\ar[r] &
            \text{solutions of}\atop \text{NYBE} \ar[r]  & \text{Novikov}\atop \text{ bialgebras}  \ar@{<->}[r] &
            \text{Manin triples of} \atop \text{Novikov algebras} }
    \end{split}
    \mlabel{eq:Novdiag}
\end{equation}

To further understand the implication in Diagram~\meqref{eq:algdiag2}, for each structure in Diagram~\meqref{eq:Novdiag}, we recall from~\cite{LLB} its counterpart for commutative and cocommutative differential ASI bialgebras, including their characterization by double constructions of differential Frobenius algebras, their corresponding admissible associative
Yang-Baxter equation, their $\mathcal{O}$-operators and the admissible differential Zinbiel algebras.
Then the analogy of the implication in Diagram ~\meqref{eq:algdiag2} is established for each pair of the structures thus obtained in a consistent manner, culminating in the commutative diagram
\vspace{-.1cm}
\small{
\begin{equation}
    \begin{split}\xymatrix{
       \txt{\tiny admissible\\ \tiny diff. Zinbiel \\ \tiny algebras } \ar[r]\ar@{->}[d]     & \txt{\tiny $\mathcal{O}$-operators on\\  \tiny admissible comm.\\ \tiny diff. algebras} \ar[r]\ar@{->}[d] &
        \txt{ \tiny solutions \\ \tiny of admissible\\ \tiny AYBE}  \ar[r]\ar@{->}[d]  & \text{comm. cocomm. }\atop \text{diff. ASI bialgeras}   \ar@{<->}[r]\ar@{->}[d] &
      \text{double constructions of} \atop \text{diff. Frobenius algebras}\ar@{->}[d]\\
            \text{pre-Novikov}\atop\text{ algebras} \ar[r]       & \mathcal{O}\text{-operators of}\atop\text{Novikov algebras}\ar[r]  &
            \text{solutions of}\atop \text{NYBE} \ar[r]   & \text{Novikov}\atop \text{ bialgebras}  \ar@{<->}[r]   &
        \text{Manin triples of} \atop \text{Novikov algebras}  }
\vspace{-.1cm}
 \end{split}
    \mlabel{eq:bigcommdiag}
\end{equation}}
Here each entry in the bottom row is meant to either be a deformation family in $q\in \bfk$ under certain conditions, or be $q=-\frac{1}{2}$ unconditionally. Here the commutativity of the first (resp. second, resp. third, resp. fourth) square follows from Proposition~\mref{pp:preoop} (resp. Proposition~\mref{pro:o-ybe}, resp. Proposition~\mref{pp:rmatbialg}, resp. Proposition~\mref{corr-bialgebra}).

The first benefit from this commutative diagram is that the complicated argument that warrants the extra condition for the implication between the two types of bialgebras in column four becomes transparent in the perspective of the double constructions and Manin triples in column five (see Remark~\mref{rk:square4}).

This diagram also gives a natural construction of Novikov bialgebras from admissible differential Zinbiel algebras (see Theorem~\mref{Constr-Nov-diff-Zinb-2}).
Furthermore, thanks to the construction of completed Lie bialgebras from Novikov bialgebras in~\mcite{HBG}, we also obtain a construction of completed Lie bialgebras from commutative and cocommutative differential ASI bialgebras (see Theorem~\mref{thm-con-Lie-bi}).

We now give an outline of the paper. In Section \mref{s:bialg}, we generalize S. Gelfand's construction of Novikov algebras to deformation families of Novikov algebras from admissible commutative differential algebras. We further present a construction of deformation families of Novikov bialgebras from commutative and cocommutative differential ASI bialgebras. Several examples are also given. As an application, we give a construction of completed Lie bialgebras from commutative and cocommutative differential ASI bialgebras. Moreover, we give an interpretation of the Novikov bialgebra construction from the viewpoint of Manin triples (or double constructions).

In Section \mref{s:ybe}, we give a construction of
antisymmetric solutions of the Novikov Yang-Baxter equation
from antisymmetric  solutions of
the admissible associative Yang-Baxter equation in an admissible
commutative differential algebra. The relationship between
$\mathcal{O}$-operators on admissible commutative differential
algebras and those on the corresponding Novikov algebras is
established. Moreover, we obtain a construction of pre-Novikov algebras
from admissible differential Zinbiel algebras. Finally, we show
that there are natural constructions of Novikov bialgebras from
admissible differential Zinbiel algebras.

\smallskip
\noindent
{\bf Notations.}
Throughout this paper, we fix a base field ${\bf k}$ of characteristic $0$. All vector spaces and algebras a over $\bfk$. Unless otherwise stated, they  are assumed to be finite-dimensional even though many results still hold in the infinite-dimensional cases. Let
$$\tau:A\otimes A\rightarrow A\otimes A,\quad a\otimes b\mapsto b\otimes a,\;\;\;\; a,b\in A,$$
be the flip operator. The identity map is denoted by $\id$. Let $A$ be a vector space with a binary operation $\circ$.
Define linear maps
$L_{\circ}, R_{\circ}:A\rightarrow {\rm End}_{\bf k}(A)$ by
\begin{eqnarray*}
L_{\circ}(a)b=a\circ b,\;\; R_{\circ}(a)b=b\circ a, \;\;\;a, b\in A.
\vspb
\end{eqnarray*}
\section{Deformation families of Novikov bialgebras via admissible commutative differential bialgebras}
\mlabel{s:bialg}

In this section, we introduce a construction of Novikov algebras
from admissible commutative differential algebras, which can be
seen as an infinitesimal deformation of the classical construction
of S. Gelfand. Then we lift this construction to the level of
bialgebras, showing that there is also a natural construction of
Novikov bialgebras from commutative and cocommutative differential
antisymmetric infinitesimal bialgebras. Moreover, the relationship
between the corresponding Manin triples (or double constructions) is
also investigated.
\vspb
\subsection{Novikov algebras via admissible commutative differential algebras}

Recall that a {\bf Novikov algebra} is a vector space $A$ with a binary operation $\circ$ that satisfies
\begin{eqnarray}
\mlabel{lef}(a\circ b)\circ c-a\circ (b\circ c)&=&(b\circ a)\circ c-b\circ (a\circ c),\\
\mlabel{Nov}(a\circ b)\circ c&=&(a\circ c)\circ b, \;\;\;a, b, c\in A.
\end{eqnarray}
Denote it by $(A,\circ)$. Also a {\bf differential algebra} is an associative algebra $(A,\cdot)$ equipped with a {\bf derivation} $D$, defined by the abstract Leibniz rule
\vspb
\begin{equation}
D(a\cdot b)=D(a)\cdot b+a\cdot D(b),\;\;\;a, b\in A.
\end{equation}
Denote it by $(A, \cdot, D)$. A differential algebra $(A, \cdot, D)$ is called {\bf commutative} if $(A, \cdot)$ is commutative.

There is the following classical construction of Novikov algebras.
\begin{pro} {\rm (\mcite{GD1}, S. Gelfand)} \mlabel{lem:cons} Let $(A, \cdot, D)$ be a commutative differential algebra. Define a binary operation $\circ$ on $A$ by
\vspb
\begin{equation}\mlabel{eq:cons}
a\circ b\coloneqq a\cdot D(b),\;\;\;\; a,b\in A.
\end{equation}
Then $(A,\circ)$ is a Novikov algebra, called the {\bf Novikov algebra induced from $(A, \cdot, D)$}.
\end{pro}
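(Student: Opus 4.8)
The plan is to verify the two defining axioms \eqref{lef} and \eqref{Nov} of a Novikov algebra directly, by expanding everything in terms of the commutative associative product $\cdot$ and the derivation $D$, and then repeatedly using the Leibniz rule $D(a\cdot b)=D(a)\cdot b+a\cdot D(b)$ together with commutativity and associativity of $\cdot$. Thus I would first record, for arbitrary $a,b,c\in A$, the expressions $a\circ b=a\cdot D(b)$, $(a\circ b)\circ c=(a\cdot D(b))\cdot D(c)$, and $a\circ(b\circ c)=a\cdot D(b\cdot D(c))=a\cdot(D(b)\cdot D(c)+b\cdot D^2(c))$, the last equality by the Leibniz rule; here $D^2$ denotes $D\circ D$.

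For the right-Novikov identity \eqref{Nov}, the computation is short: both $(a\circ b)\circ c$ and $(a\circ c)\circ b$ equal $(a\cdot D(b))\cdot D(c)=a\cdot D(b)\cdot D(c)$ after using associativity, and the symmetry of this expression in $b$ and $c$ is immediate from the commutativity of $\cdot$ (the factors $D(b)$ and $D(c)$ can be swapped). So \eqref{Nov} holds with essentially no work beyond unwinding definitions.

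For the left-symmetry identity \eqref{lef}, I would compute the associator $(a\circ b)\circ c-a\circ(b\circ c)$. Using the expansions above and associativity of $\cdot$, this equals $a\cdot D(b)\cdot D(c)-a\cdot D(b)\cdot D(c)-a\cdot b\cdot D^2(c)=-a\cdot b\cdot D^2(c)$. The crucial observation is that this final expression $-a\cdot b\cdot D^2(c)$ is symmetric in $a$ and $b$, again by commutativity of $\cdot$. Hence $(a\circ b)\circ c-a\circ(b\circ c)=(b\circ a)\circ c-b\circ(a\circ c)$, which is exactly \eqref{lef}.

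There is no real obstacle here: the proof is a routine verification, and the only ingredients are the Leibniz rule (used once, to expand $D(b\cdot D(c))$), the commutativity of $\cdot$ (used to establish the symmetries that make both axioms hold), and associativity of $\cdot$ (used to rearrange products freely). The one point worth stating carefully is that the associator collapses to the single term $-a\cdot b\cdot D^2(c)$, since it is precisely the commutativity of this term in $a,b$ that yields left-symmetry; this is the conceptual heart of S. Gelfand's construction, even though it requires no clever manipulation.
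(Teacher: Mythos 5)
Your verification is correct: the right-Novikov identity \eqref{Nov} follows from commutativity alone, and the associator collapses to $-a\cdot b\cdot D^{2}(c)$, whose symmetry in $a,b$ gives \eqref{lef}. The paper gives no proof of this proposition (it is cited from Gelfand--Dorfman), and your argument is exactly the standard direct verification that the citation refers to, so there is nothing to reconcile.
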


We next recall the infinitesimal deformation of Novikov algebras.

\begin{lem}\mcite{BM} \mlabel{inf-defor}
Let $(A, \circ)$ be a Novikov algebra and $\circ_q$ be a binary operation on $A$ defined by
\begin{eqnarray}\mlabel{eq:deform}
a\circ_q b:=a\circ b+qf(a,b), \;\; a, b\in A,
\end{eqnarray}
where $f:A\otimes A\rightarrow A$ is a linear map and $q\in {\bf k}$. Then $(A, \circ_q)$ is a Novikov algebra for all $q\in {\bf k}$ if and only if
\vspb
\begin{eqnarray}
&&\mlabel{eq:defor-1}f(f(a,b),c)-f(a, f(b,c))-f(f(b,a),c)+f(b,f(a,c))=0,\\
&&f(a, b\circ c)-f(a\circ b, c)+f(b\circ a, c)-f(b, a\circ c)+a\circ f(b,c)-f(a,b)\circ c\nonumber\\
&&\mlabel{eq:defor-2}\qquad\qquad\;\;\;\;\;\;\;+f(b,a)\circ c-b\circ f(a, c)=0,\\
&&\mlabel{eq:defor-3}f(f(a,b),c)=f(f(a,c),b),\\
&&\mlabel{eq:defor-4}f(a,b)\circ c-f(a, c)\circ b+f(a\circ b, c)-f(a\circ c, b) =0, \;\;\;a, b, c\in A.
\end{eqnarray}
In this case, $(A,\circ_q)$ is called an {\bf infinitesimal
deformation of $(A,\circ)$}.
\end{lem}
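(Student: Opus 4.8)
The plan is to substitute the deformed product $\circ_q$ into the two defining identities \meqref{lef} and \meqref{Nov} of a Novikov algebra, expand, and sort the result as a polynomial in $q$ whose coefficients lie in $A$ and depend on $a,b,c$. Since $\bfk$ has characteristic $0$, it is infinite, so $(A,\circ_q)$ is a Novikov algebra for \emph{every} $q\in\bfk$ if and only if each such coefficient polynomial vanishes identically; as $(A,\circ)$ itself is already Novikov, the degree-$0$ coefficients drop out, and only the degree-$1$ and degree-$2$ coefficients produce conditions.

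First I would handle the left-symmetry identity \meqref{lef}. Using $x\circ_q y=x\circ y+qf(x,y)$ twice, one computes
\begin{align*}
(a\circ_q b)\circ_q c-a\circ_q(b\circ_q c)
&=\big((a\circ b)\circ c-a\circ(b\circ c)\big)\\
&\quad+q\big(f(a,b)\circ c+f(a\circ b,c)-a\circ f(b,c)-f(a,b\circ c)\big)\\
&\quad+q^2\big(f(f(a,b),c)-f(a,f(b,c))\big),
\end{align*}
and the same expansion with $a$ and $b$ swapped computes the right-hand side of \meqref{lef}. Subtracting, the degree-$0$ part is zero because $(A,\circ)$ satisfies \meqref{lef}; the degree-$2$ part is exactly the left-hand side of \meqref{eq:defor-1}; and the degree-$1$ part is, up to an overall sign, the left-hand side of \meqref{eq:defor-2}.

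Next I would repeat this for the right-commutativity identity \meqref{Nov}: expanding $(a\circ_q b)\circ_q c$ and $(a\circ_q c)\circ_q b$ and subtracting, the degree-$0$ part vanishes by \meqref{Nov} for $(A,\circ)$, the degree-$2$ part is the left-hand side of \meqref{eq:defor-3}, and the degree-$1$ part is the left-hand side of \meqref{eq:defor-4}. To finish the equivalence, note that if \meqref{eq:defor-1}--\meqref{eq:defor-4} hold then both $q$-polynomials above are identically zero, so $(A,\circ_q)$ is Novikov for all $q$; conversely, if $(A,\circ_q)$ is Novikov for all $q\in\bfk$, then each of the two $q$-polynomials vanishes for all $q\in\bfk$, and evaluating at two distinct scalars (say $q=1$ and $q=-1$, using $\mathrm{char}\,\bfk\neq 2$) separates the $q$- and $q^2$-coefficients, forcing all four identities.

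The only point requiring care is bookkeeping: tracking signs when collecting the degree-$1$ terms of \meqref{lef} so that they line up with \meqref{eq:defor-2} exactly as written, and observing that the separation of the conditions on the $q^1$- and $q^2$-coefficients genuinely uses that $\bfk$ is infinite of characteristic different from $2$, which is guaranteed by the standing assumption $\mathrm{char}\,\bfk=0$. There is no deeper obstacle.
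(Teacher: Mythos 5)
Your proof is correct: expanding both defining identities of a Novikov algebra for $\circ_q$ as polynomials in $q$, observing that the constant terms vanish because $(A,\circ)$ is already Novikov, and separating the $q$- and $q^2$-coefficients (valid since $\bfk$ is infinite of characteristic $0$) yields exactly Eqs.~\eqref{eq:defor-1}--\eqref{eq:defor-4}, the overall sign on the degree-one coefficient of the left-symmetry identity being immaterial. The paper states this lemma without proof, citing \cite{BM}, and your computation is the standard argument one would expect there.
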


We now give an infinitesimal deformation of S. Gelfand's construction of Novikov algebras.

\begin{pro}\mlabel{spec-def}
Let $(A, \cdot, D)$ be a commutative differential algebra and
$Q:A\rightarrow A$ be a linear map. Define a binary operation
$\circ_q$ on $A$ by
\vspb
\begin{eqnarray}
a\circ_q b:=a\cdot D(b)+q a\cdot Q(b) (= a\cdot (D+qQ)(b)),\;\;\;\;a, b\in A,
\end{eqnarray}
where $q\in {\bf k}$. Then $(A, \circ_q)$ is a Novikov algebra for all $q\in {\bf k}$ if and only if
\begin{eqnarray}
&&\mlabel{eq:defor-5}(a\cdot Q(b))\cdot Q(c)-a\cdot Q(b\cdot Q(c))=(b\cdot Q(a))\cdot Q(c)-b\cdot Q(a\cdot Q(c)),\\
&&\mlabel{eq:defor-6}(a\cdot Q(b))\cdot D(c)-a\cdot Q(b\cdot D(c))=(b\cdot Q(a))\cdot D(c)-b\cdot Q(a\cdot D(c)),\;\;\;\; a, b, c\in A.
\end{eqnarray}
\end{pro}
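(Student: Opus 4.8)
The plan is to apply Lemma~\mref{inf-defor} with $(A,\circ)$ taken to be the Novikov algebra induced from $(A,\cdot,D)$ via S.~Gelfand's construction, i.e.\ $a\circ b=a\cdot D(b)$ (Proposition~\mref{lem:cons}), and with the linear deformation map $f(a,b):=a\cdot Q(b)$. With these choices, $\circ_q$ as defined in the statement is exactly $a\circ b+qf(a,b)$, so by Lemma~\mref{inf-defor} the assertion ``$(A,\circ_q)$ is Novikov for all $q$'' is equivalent to the conjunction of the four identities \meqref{eq:defor-1}--\meqref{eq:defor-4}. The task is then to show that, given that $(A,\cdot,D)$ is a commutative differential algebra, these four identities are equivalent to the two stated identities \meqref{eq:defor-5} and \meqref{eq:defor-6}.

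First I would substitute $f(a,b)=a\cdot Q(b)$ and $a\circ b=a\cdot D(b)$ into each of \meqref{eq:defor-1}--\meqref{eq:defor-4} and simplify using commutativity and associativity of $\cdot$. Identity \meqref{eq:defor-1} becomes
\[
(a\cdot Q(b))\cdot Q(c)-a\cdot Q(b\cdot Q(c))-(b\cdot Q(a))\cdot Q(c)+b\cdot Q(a\cdot Q(c))=0,
\]
which is precisely \meqref{eq:defor-5}. Identity \meqref{eq:defor-3}, namely $f(f(a,b),c)=f(f(a,c),b)$, reads $(a\cdot Q(b))\cdot Q(c)=(a\cdot Q(c))\cdot Q(b)$, which holds automatically by commutativity and associativity of $\cdot$, so it imposes no condition. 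Identity \meqref{eq:defor-4}: the four terms are $f(a,b)\circ c=(a\cdot Q(b))\cdot D(c)$, $-f(a,c)\circ b=-(a\cdot Q(c))\cdot D(b)$, $f(a\circ b,c)=(a\cdot D(b))\cdot Q(c)$, and $-f(a\circ c,b)=-(a\cdot D(c))\cdot Q(b)$; I would use the Leibniz rule to rewrite, e.g., $a\cdot Q(b)\cdot D(c)$ and $a\cdot D(b)\cdot Q(c)$, and expect the four terms to cancel in pairs after reorganizing, so that \meqref{eq:defor-4} is also automatically satisfied.

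The substantive point is identity \meqref{eq:defor-2}. I would expand all eight terms: $f(a,b\circ c)=a\cdot Q(b\cdot D(c))$, $-f(a\circ b,c)=-(a\cdot D(b))\cdot Q(c)$, $f(b\circ a,c)=(b\cdot D(a))\cdot Q(c)$, $-f(b,a\circ c)=-b\cdot Q(a\cdot D(c))$, $a\circ f(b,c)=(a\cdot D)(b\cdot Q(c))$ — here one must be careful: $a\circ x=a\cdot D(x)$, so $a\circ f(b,c)=a\cdot D(b\cdot Q(c))$ — then $-f(a,b)\circ c=-(a\cdot Q(b))\cdot D(c)$, $f(b,a)\circ c=(b\cdot Q(a))\cdot D(c)$, and $-b\circ f(a,c)=-b\cdot D(a\cdot Q(c))$. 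Applying the Leibniz rule to the two terms containing $D$ of a product, $a\cdot D(b\cdot Q(c))=a\cdot D(b)\cdot Q(c)+a\cdot b\cdot D(Q(c))$ and $b\cdot D(a\cdot Q(c))=b\cdot D(a)\cdot Q(c)+a\cdot b\cdot D(Q(c))$, I expect the $a\cdot b\cdot D(Q(c))$ contributions and the two $D(b)\cdot Q(c)$–type terms to cancel against the terms $-f(a\circ b,c)$, $f(b\circ a,c)$, $a\circ f(b,c)$, $-b\circ f(a,c)$, leaving exactly
\[
a\cdot Q(b\cdot D(c))-b\cdot Q(a\cdot D(c))-(a\cdot Q(b))\cdot D(c)+(b\cdot Q(a))\cdot D(c)=0,
\]
which, after moving terms across, is precisely \meqref{eq:defor-6}. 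Thus \meqref{eq:defor-1}$\Leftrightarrow$\meqref{eq:defor-5}, \meqref{eq:defor-2}$\Leftrightarrow$\meqref{eq:defor-6}, and \meqref{eq:defor-3}, \meqref{eq:defor-4} are vacuous, which gives the claimed equivalence.

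The main obstacle is bookkeeping in the expansion of \meqref{eq:defor-2}: one must track eight terms, apply the Leibniz rule precisely to the correct subterms, and be vigilant that $a\circ f(b,c)$ means $a\cdot D(f(b,c))$ with $D$ landing on the \emph{whole} product $b\cdot Q(c)$, not just on $Q(c)$. Commutativity and associativity of $\cdot$ are used freely throughout to match up terms; no property of $Q$ other than $\bfk$-linearity is needed, which is consistent with the statement imposing conditions \meqref{eq:defor-5}--\meqref{eq:defor-6} on $Q$ rather than assuming them. I would also double-check the degenerate identities \meqref{eq:defor-3} and \meqref{eq:defor-4} carefully, since it is exactly the commutative (as opposed to merely associative) structure that makes them hold identically.
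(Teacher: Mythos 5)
Your proposal is correct and follows exactly the paper's route: apply Lemma~\ref{inf-defor} with $a\circ b=a\cdot D(b)$ and $f(a,b)=a\cdot Q(b)$, observe that \eqref{eq:defor-3} and \eqref{eq:defor-4} hold automatically by commutativity and associativity of $\cdot$ (no Leibniz rule is actually needed for \eqref{eq:defor-4}), and verify that \eqref{eq:defor-1} and \eqref{eq:defor-2} reduce to \eqref{eq:defor-5} and \eqref{eq:defor-6} respectively. In fact your expansion of \eqref{eq:defor-2}, with the cancellation of the $a\cdot b\cdot D(Q(c))$ terms, supplies more detail than the paper's proof, which simply asserts these reductions.
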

\begin{proof}
Take $f(a,b)=a\cdot Q(b)$ and $a\circ b=a\cdot D(b)$ in Lemma \mref{inf-defor}. Then Eqs. (\mref{eq:defor-3}) and (\mref{eq:defor-4}) naturally hold, and Eqs. (\mref{eq:defor-1}) and (\mref{eq:defor-2}) hold if and only if Eqs. (\mref{eq:defor-5}) and (\mref{eq:defor-6}) hold respectively. This completes the proof.
\end{proof}

\begin{ex}
Proposition~\mref{spec-def} has the following special cases.
\begin{enumerate}
\item If $Q$ is also a derivation, then the conclusion also
follows from the fact that the linear combination of two derivations is still a
derivation.
\item Take $Q=\id$ or $Q(b)=x\cdot b, b\in A$ for a
fixed $x\in A$. Then Eqs. (\mref{eq:defor-5}) and
(\mref{eq:defor-6}) naturally hold and hence $(A, \circ_q)$ is a
Novikov algebra. The two constructions are exactly the ones given
in \mcite{F} by Filipov and  \mcite{X2} by Xu respectively.
\end{enumerate}
\end{ex}

Next, we give a new construction of Novikov algebras from admissible commutative differential algebras.

\begin{defi}  \mcite{LLB}
Let $(A, \cdot, D)$ be a commutative differential algebra and $Q: A\rightarrow A$ be a linear map. If
\vspb
\begin{eqnarray}
\mlabel{eq:RSI1} Q(a\cdot b)=Q(a)\cdot b-a\cdot D(b),\;\;\;\;\;a, b\in A,
\end{eqnarray}
then we say that $Q$ is  {\bf admissible to $(A, \cdot, D)$} or that $(A, \cdot, D)$ is {\bf $Q$-admissible}. We also say that the quadruple $(A, \cdot, D, Q)$ is an {\bf admissible commutative differential algebra}.
\mlabel{de:addiff}
\end{defi}

\begin{rmk} By \mcite{LLB}, a linear map $Q:A\rightarrow A$ is admissible
to a commutative differential algebra $(A, \cdot, D)$ if and only
if $(A^*,-L_{\cdot}^*,Q^*)$ is a representation of $(A,\cdot,D)$
(see Definition~\ref{Def:mo}).
\end{rmk}

\begin{ex}
Note that $(A, \cdot, D)$ is always $-D$-admissible. Moreover, if
$(A, \cdot, D, Q)$ is an admissible commutative differential
algebra, then $Q(a\cdot b)=a\cdot Q(b)-D(a)\cdot b$ for all
$a,b\in A$, due to the commutativity of $\cdot$. Therefore, we
obtain
\vspb
\begin{eqnarray}
a\cdot (D+Q)(b)=(D+Q)(a)\cdot b, \;\;\;\; a, b\in A.
\end{eqnarray}
\end{ex}

\begin{pro}\mlabel{constr2}
Let $(A, \cdot, D, Q)$ be an admissible commutative differential
algebra. Define a binary operation $\circ_q$ on $A$ by
\vspb
\begin{eqnarray}\mlabel{eq:k}
a\circ_q b:=a\cdot D(b)+q a\cdot Q(b)=a\cdot (D+qQ) b, \;\; a,
b\in A,
\end{eqnarray}
where $q\in {\bf k}$. Then $(A, \circ_q)$ is a Novikov algebra for all $q\in {\bf k}$.
 We call $(A, \circ_q)$ the {\bf Novikov algebra induced from $(A, \cdot, D, Q)$}.
\end{pro}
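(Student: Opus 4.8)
The plan is to reduce Proposition~\mref{constr2} to Proposition~\mref{spec-def}, which already gives a clean criterion: with $a\circ b = a\cdot D(b)$ and $f(a,b)=a\cdot Q(b)$, the family $(A,\circ_q)$ consists of Novikov algebras for all $q$ precisely when Eqs.~\meqref{eq:defor-5} and~\meqref{eq:defor-6} hold. So the task is to verify these two identities from the admissibility condition~\meqref{eq:RSI1}, $Q(a\cdot b)=Q(a)\cdot b-a\cdot D(b)$, together with commutativity and associativity of $\cdot$ and the Leibniz rule for $D$. The key algebraic tool is the rewritten form noted in the Example before the proposition, namely $Q(a\cdot b)=a\cdot Q(b)-D(a)\cdot b$, which lets one move $Q$ across a product and swap which factor $D$ lands on.

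First I would establish~\meqref{eq:defor-5}. Expand the inner term $a\cdot Q(b\cdot Q(c))$ using admissibility: $Q(b\cdot Q(c)) = Q(b)\cdot Q(c) - b\cdot D(Q(c))$, so $a\cdot Q(b\cdot Q(c)) = a\cdot(Q(b)\cdot Q(c)) - a\cdot(b\cdot D(Q(c)))$. By associativity, $(a\cdot Q(b))\cdot Q(c) - a\cdot Q(b\cdot Q(c)) = a\cdot(b\cdot DQ(c))$. The right-hand side of~\meqref{eq:defor-5} becomes, symmetrically, $b\cdot(a\cdot DQ(c))$. By commutativity and associativity these two are equal, so~\meqref{eq:defor-5} holds. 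The same computation with $D(c)$ in place of $Q(c)$: $Q(b\cdot D(c)) = Q(b)\cdot D(c) - b\cdot D(D(c))$, hence $(a\cdot Q(b))\cdot D(c) - a\cdot Q(b\cdot D(c)) = a\cdot(b\cdot D^2(c))$, and the right side of~\meqref{eq:defor-6} is $b\cdot(a\cdot D^2(c))$; again these agree by commutativity and associativity, giving~\meqref{eq:defor-6}.

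Therefore both conditions of Proposition~\mref{spec-def} are satisfied, and the conclusion follows. I do not anticipate a genuine obstacle here: the argument is a short symmetrization using only commutativity, associativity, and one application of the admissibility rule to peel off $Q$ from a product — the whole point of the admissibility condition is that it makes $Q(x\cdot Q(c)) - Q(x)\cdot Q(c)$ symmetric-in-$x$-after-multiplying-by-the-remaining-factor, which is exactly what Eqs.~\meqref{eq:defor-5}--\meqref{eq:defor-6} demand. The only thing to be careful about is bookkeeping of which factor carries $D$ versus $Q$ when rewriting, but commutativity of $\cdot$ absorbs any such ambiguity. One could alternatively prove the proposition directly by checking~\meqref{lef} and~\meqref{Nov} for $\circ_q$, but routing through Proposition~\mref{spec-def} is cleaner and avoids re-deriving the deformation identities.
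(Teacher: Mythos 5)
Your proposal is correct and follows the same route as the paper: the paper's proof also reduces to Proposition~\ref{spec-def} and simply asserts that Eqs.~\eqref{eq:defor-5} and \eqref{eq:defor-6} are ``straightforward to check,'' which is exactly the verification you carry out (both sides collapse to $a\cdot(b\cdot DQ(c))$, resp.\ $a\cdot(b\cdot D^2(c))$, via one application of admissibility plus associativity and commutativity). Your computation is accurate, so there is nothing to add.
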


\begin{proof}
It is straightforward to check that in this case, Eqs.
(\mref{eq:defor-5}) and (\mref{eq:defor-6}) hold. Then $(A,
\circ_q)$ is a Novikov algebra by Proposition \mref{spec-def}.
\end{proof}

\begin{rmk}
Note that $(A, \circ_0)$ is exactly the Novikov algebra obtained
by S. Gelfand's construction and hence  $(A, \circ_q)$ is an
infinitesimal deformation of $(A, \circ_0)$.
\end{rmk}

\begin{rmk}Let $(A, \cdot, D, Q)$ be an admissible commutative differential algebra. More generally, for each $(p,q)\in \bfk^2$, the product $a \circ_{p,q} b:= a\cdot(pD+qQ)(b)$ also defines a Novikov algebra. The case of
$\circ_{1,q}$ recovers the product $\circ_q$ defined in Eq.~\meqref{eq:k};
while the case of $\circ_{-2,1}$ was observed by Guilai Liu and we thank him for communicating it to us.
\end{rmk}

\subsection{Novikov bialgebras and commutative and cocomutative differential ASI bialgebras}
We start with recalling the related notions.
\begin{defi}\mcite{HBG, KUZ}
A {\bf Novikov coalgebra} is a vector space $A$ with a linear map $\Delta: A\rightarrow A\otimes A$ such
that
\vspb
\begin{eqnarray}
\mlabel{Lc3}(\id\otimes \Delta)\Delta(a)-(\tau\otimes \id)(\id\otimes \Delta)\Delta(a)&=&(\Delta\otimes \id)\Delta(a)-(\tau\otimes \id)(\Delta\otimes \id)\Delta(a),\\
\mlabel{Lc4}(\tau\otimes \id)(\id\otimes \Delta)\tau
\Delta(a)&=&(\Delta\otimes \id)\Delta(a),\;\;a\in A.
\end{eqnarray}
\end{defi}

Let $\langle \cdot, \cdot \rangle$ be the usual pairing between a vector space and its dual vector space. Let $V$ and $W$ be vector spaces. For a linear map $\varphi: V\rightarrow W$, let $\varphi^\ast: W^\ast\rightarrow V^\ast$ be the transpose map, characterized by
\begin{eqnarray*}
\langle \varphi^\ast(f),v\rangle=\langle f, \varphi(v)\rangle,\;\;\; f\in W^\ast, v\in V.
\end{eqnarray*}
Note that $(A, \Delta)$ is a Novikov coalgebra if and only if $(A^\ast, \Delta^\ast)$ is a Novikov algebra.

Let $(A, \circ)$ be a Novikov algebra. Throughout the paper, we set
\vspa
$$a\star b:=a\circ b+b\circ a, \quad a, b\in A.
\vspa
$$

\begin{defi}\mcite{HBG}
Let $(A,\circ)$ be a Novikov algebra and $(A, \Delta)$ be a
Novikov coalgebra. The triple $(A,\circ,\Delta)$ is called a {\bf Novikov
bialgebra} if the following compatibility conditions hold.  {\small\begin{eqnarray}
&\mlabel{Lb5}\Delta(a\circ b)=(R_{\circ} (b)\otimes \id)\Delta (a)+(\id\otimes L_{\star}(a))(\Delta(b)+\tau\Delta(b)),&\\
&\mlabel{Lb6}(L_{\star}(a)\otimes \id)\Delta(b)-(\id\otimes L_{\star}(a))\tau\Delta (b)=(L_{\star}(b)\otimes \id)\Delta(a)-(\id\otimes L_{\star}(b))\tau\Delta(a),&\\
&\mlabel{Lb7}(\id\otimes R_{\circ}(a)-R_{\circ}(a)\otimes
\id)(\Delta(b)+\tau\Delta(b))=(\id\otimes
R_{\circ}(b)-R_{\circ}(b)\otimes
\id)(\Delta(a)+\tau\Delta(a)), \ a, b\in A.&
\end{eqnarray}}
\end{defi}

Recall that a {\bf coassociative coalgebra} is a vector space $A$ with a linear map $\delta:A\rightarrow A\otimes A$ satisfying
\begin{eqnarray*}
(\delta\otimes \id)\delta(a)=(\id\otimes \delta)\delta(a),\;\;\; a\in A.
\end{eqnarray*}
A coassociative coalgebra $(A, \delta)$ is called {\bf cocommutative} if $\delta=\tau\delta$.

For a coassociative coalgebra $(A, \delta)$, a linear
map $Q: A\rightarrow A$ is called a {\bf coderivation} on $(A,
\delta)$ if
\begin{eqnarray}
\delta (Q(a))=(\id \otimes Q+Q\otimes
\id)\delta(a),\;\;\;a\in A.
\end{eqnarray}
A {\bf cocommutative differential (coassociative) coalgebra} is a triple $(A,\delta,Q)$ where $(A,\delta)$ is a cocommutative coassociative coalgebra and $Q$ is a coderivation.
Obviously, $(A,\delta,Q)$ is a cocommutative differential
coalgebra if and only if $(A^*,\delta^*,Q^*)$ is a commutative differential algebra.

Let $(A, \cdot)$ be an associative algebra and $(A, \delta)$ be a coassociative coalgebra.
If moreover,
\begin{eqnarray}
&&\mlabel{ASI1}\delta(a\cdot b)=(\id\otimes L_{\cdot}(a))\delta(b)+(R_{\cdot}(b)\otimes \id)\delta(a),\\
&&\mlabel{ASI2}(L_{\cdot}(b)\otimes \id-\id\otimes
R_{\cdot}(b))\delta(a)+\tau((L_{\cdot}(a)\otimes
\id-\id\otimes R_{\cdot}(a))\delta(b))=0,\;\;\;\;a, b\in A,
\end{eqnarray}
then $(A, \cdot, \delta)$ is called an {\bf antisymmetric
infinitesimal bialgebra} or simply an {\bf ASI bialgebra} (see
\mcite{Bai}). Note that when $(A, \cdot)$ is commutative and $(A,
\delta)$ is cocommutative, Eq.~(\mref{ASI2}) automatically holds.

\begin{defi}
Let $(A,\delta,Q)$ be a cocommutative differential
coalgebra. Let $D:A\rightarrow A$ be a linear map satisfying
\vspb
\begin{equation}\mlabel{eq:RSI2}  (D\otimes \id-\id \otimes Q)\delta(a)=\delta (D(a)),\;\;\;\;\; a\in A.
\end{equation}
We call $(A,\delta, Q,D)$ an {\bf admissible cocommutative differential
coalgebra}.
\end{defi}

Then the following result is obvious.

\begin{pro}\mlabel{pro:dual-adm}
$(A,\delta, Q,D)$ is an admissible cocommutative differential
coalgebra if and only if $(A^*$, $\delta^*$, $Q^*,$ $D^*)$ is an admissible commutative differential algebra.
\end{pro}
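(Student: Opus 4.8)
The plan is to unravel the definitions on both sides of the claimed equivalence and observe that they are transposes of one another, clause by clause. Recall that for any linear map $\varphi:V\to W$ between finite-dimensional spaces, $\varphi^{**}=\varphi$ under the canonical identification $V\cong V^{**}$, and $(\varphi\otimes\psi)^*=\varphi^*\otimes\psi^*$, together with the fact that $\tau^*=\tau$. So the whole argument is a formal dualization, and the finite-dimensionality hypothesis (already standing in the Notations) is what makes it an "if and only if" rather than a one-directional implication.

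First I would spell out what each side means. By definition, $(A,\delta,Q,D)$ being an admissible cocommutative differential coalgebra requires: (i) $(A,\delta)$ is a cocommutative coassociative coalgebra; (ii) $Q$ is a coderivation on $(A,\delta)$, i.e.\ $\delta\circ Q=(\id\otimes Q+Q\otimes\id)\delta$; and (iii) the admissibility relation \eqref{eq:RSI2}, namely $(D\otimes\id-\id\otimes Q)\delta=\delta\circ D$. On the other side, $(A^*,\delta^*,Q^*,D^*)$ being an admissible commutative differential algebra requires: (i$'$) $(A^*,\delta^*)$ is a commutative associative algebra; (ii$'$) $D^*$ is a derivation on $(A^*,\delta^*)$; and (iii$'$) $Q^*$ is admissible to $(A^*,\delta^*,D^*)$ in the sense of Definition~\ref{de:addiff}, i.e.\ $Q^*(x\cdot y)=Q^*(x)\cdot y-x\cdot D^*(y)$ for all $x,y\in A^*$, where $\cdot=\delta^*$.

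Next I would match the three pairs. The equivalence (i)$\Leftrightarrow$(i$'$) is the standard duality between cocommutative coassociative coalgebras and commutative associative algebras, and is already invoked in the excerpt ("$(A,\Delta)$ is a Novikov coalgebra iff $(A^*,\Delta^*)$ is a Novikov algebra"; "$(A,\delta,Q)$ is a cocommutative differential coalgebra iff $(A^*,\delta^*,Q^*)$ is a commutative differential algebra"). In fact (i)--(ii)$\Leftrightarrow$(i$'$)--(ii$'$) is exactly the statement, already recorded in the excerpt, that $(A,\delta,Q)$ is a cocommutative differential coalgebra iff $(A^*,\delta^*,Q^*)$ is a commutative differential algebra. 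So the only genuine content left is (iii)$\Leftrightarrow$(iii$'$). Here I would take transposes of \eqref{eq:RSI2}: applying $(-)^*$ to $(D\otimes\id-\id\otimes Q)\delta=\delta D$ gives $\delta^*(D^*\otimes\id-\id\otimes Q^*)=D^*\delta^*$ on $A^*\otimes A^*$, using $(\delta)^*=\delta^*$ as the multiplication, $(D\otimes\id)^*=D^*\otimes\id$, $(\id\otimes Q)^*=\id\otimes Q^*$. Reading $\delta^*$ as the product $\cdot$, this says $(D^*(x))\cdot y - x\cdot(Q^*(y)) = D^*(x\cdot y)$, i.e.\ $D^*(x\cdot y)=D^*(x)\cdot y - x\cdot Q^*(y)$. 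By commutativity of $\cdot$ this is precisely the relation in the Example following Definition~\ref{de:addiff} that is equivalent to admissibility $Q^*(x\cdot y)=Q^*(x)\cdot y-x\cdot D^*(y)$; alternatively one transposes \eqref{eq:RSI1} directly and matches. Conversely, since $A$ is finite-dimensional, $(-)^*$ applied twice is the identity, so the converse implication is obtained by transposing back. This closes the equivalence.

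I do not expect a real obstacle here; the statement is declared "obvious" in the excerpt and the proof is a bookkeeping exercise in transposition. The one point deserving a sentence of care is the correct handling of the minus sign and the asymmetry between $D$ and $Q$ in \eqref{eq:RSI2} versus \eqref{eq:RSI1}: one must use commutativity of $\cdot=\delta^*$ (equivalently cocommutativity of $\delta$) to pass between the "$Q(a)\cdot b - a\cdot D(b)$" form and the "$a\cdot Q(b)-D(a)\cdot b$" form, exactly as in the Example after Definition~\ref{de:addiff}. So the proof I would write is simply: "Immediate from the definitions by taking transposes, using $\varphi^{**}=\varphi$ for finite-dimensional $A$, $(\varphi\otimes\psi)^*=\varphi^*\otimes\psi^*$, $\tau^*=\tau$, and the duality between cocommutative differential coalgebras and commutative differential algebras; the admissibility condition \eqref{eq:RSI2} transposes to \eqref{eq:RSI1} after invoking commutativity of $\delta^*$."
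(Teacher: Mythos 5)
Your overall strategy --- formal dualization using $\varphi^{**}=\varphi$ and $(\varphi\otimes\psi)^*=\varphi^*\otimes\psi^*$ --- is exactly what the paper intends (it offers no proof, declaring the result obvious), and your transpose computation of \eqref{eq:RSI2} is correct. However, there is a concrete bookkeeping error in how you unpack the right-hand side of the equivalence, and the step you insert to repair it is invalid. In the quadruple $(A^*,\delta^*,Q^*,D^*)$, matched against Definition~\ref{de:addiff}, whose quadruple $(A,\cdot,D,Q)$ has the \emph{third} slot the derivation and the \emph{fourth} slot the admissible operator, it is $Q^*$ that must be the derivation of $(A^*,\delta^*)$ and $D^*$ that must be admissible to $(A^*,\delta^*,Q^*)$. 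This is consistent with the sentence you quote from the paper: $(A,\delta,Q)$ is a cocommutative differential coalgebra iff $(A^*,\delta^*,Q^*)$ is a commutative differential algebra, i.e.\ iff $Q^*$ is a derivation. Your conditions (ii$'$) and (iii$'$) have these roles swapped.

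The good news is that your transposed identity $D^*(x\cdot y)=D^*(x)\cdot y-x\cdot Q^*(y)$ is already \emph{exactly} condition \eqref{eq:RSI1} for the quadruple $(A^*,\delta^*,Q^*,D^*)$ (substitute $Q\mapsto D^*$ and $D\mapsto Q^*$), so once the labels are corrected no further step is needed. The step you do insert --- that commutativity of $\delta^*$ converts $D^*(x\cdot y)=D^*(x)\cdot y-x\cdot Q^*(y)$ into $Q^*(x\cdot y)=Q^*(x)\cdot y-x\cdot D^*(y)$ --- is false: commutativity only rewrites $Q(a\cdot b)=Q(a)\cdot b-a\cdot D(b)$ as $Q(a\cdot b)=a\cdot Q(b)-D(a)\cdot b$ (the Example after Definition~\ref{de:addiff}); both are equations for $Q(a\cdot b)$, and neither interchanges which of the two operators is applied to the product. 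In other words, ``$D^*$ is admissible to $(A^*,\delta^*,Q^*)$'' and ``$Q^*$ is admissible to $(A^*,\delta^*,D^*)$'' are genuinely different conditions. With the roles restored --- $Q$ coderivation $\leftrightarrow$ $Q^*$ derivation, and \eqref{eq:RSI2} $\leftrightarrow$ \eqref{eq:RSI1} for $D^*$ relative to the derivation $Q^*$ --- your dualization argument closes correctly, with the converse following from $\varphi^{**}=\varphi$ in finite dimensions as you say.
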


Next, we present a construction of Novikov coalgebras from admissible cocommutative differential
coalgebras.
\begin{pro}\mlabel{pro:co-cons}
Let $(A,\delta, Q, D)$ be an admissible cocommutative differential
coalgebra. Define a coproduct $\Delta_q$ on $A$ by
\vspb
\begin{equation} \mlabel{eq:cons2}
\Delta_q(a):=(\id\otimes (Q+qD))\delta(a),\;\;\;\;a\in A,
\end{equation}
where $q\in {\bf k}$. Then $(A,\Delta_q)$ is a Novikov coalgebra for all $q\in {\bf k}$. We say that $(A, \Delta_q)$ is  the {\bf Novikov coalgebra induced from $(A,\delta, Q, D)$}.
\end{pro}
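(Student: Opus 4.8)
The statement is the dual of Proposition~\mref{constr2}, so the cleanest route is dualization rather than a fresh coalgebra computation. First I would recall Proposition~\mref{pro:dual-adm}: $(A,\delta,Q,D)$ is an admissible cocommutative differential coalgebra if and only if $(A^*,\delta^*,Q^*,D^*)$ is an admissible commutative differential algebra. Then apply Proposition~\mref{constr2} to $(A^*,\delta^*,Q^*,D^*)$: for every $q\in\bfk$ the binary operation $f\circ_q g:=f\cdot_{\delta^*}(D^*+qQ^*)(g)$ makes $(A^*,\circ_q)$ a Novikov algebra, where $\cdot_{\delta^*}=\delta^*$ denotes the multiplication dual to $\delta$. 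Finally I would invoke the remark made earlier in the excerpt that $(A,\Delta)$ is a Novikov coalgebra if and only if $(A^*,\Delta^*)$ is a Novikov algebra, so it suffices to check that the dual of the coproduct $\Delta_q$ defined by Eq.~\meqref{eq:cons2} coincides with the product $\circ_q$ on $A^*$ (identifying $A^{**}$ with $A$ in the finite-dimensional setting).

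So the one nontrivial point is the identification $(\Delta_q)^* = \circ_q$. Here $\Delta_q=(\id\otimes(Q+qD))\delta$, hence $(\Delta_q)^* = \delta^*\circ(\id\otimes(Q+qD))^* = \delta^*\circ(\id\otimes(Q^*+qD^*))$ as a map $A^*\otimes A^*\to A^*$, using $(\varphi\otimes\psi)^*=\varphi^*\otimes\psi^*$ and $(gh)^*=h^*g^*$ for composites. Unwinding the pairing, this says $\langle (\Delta_q)^*(f\otimes g), a\rangle = \langle f\otimes g, (\id\otimes(Q+qD))\delta(a)\rangle$, which is exactly $\langle f\cdot_{\delta^*}(Q^*+qD^*)(g), a\rangle$, i.e. $(\Delta_q)^*(f\otimes g) = f\cdot_{\delta^*}(Q^*+qD^*)(g)$. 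Comparing with $\circ_q$ on $A^*$ from Proposition~\mref{constr2}, where the admissible commutative differential algebra is $(A^*,\delta^*,D^*,Q^*)$ — note the roles: the "derivation" is $D^*$ and the "admissible operator" is $Q^*$ — the induced product is $f\cdot_{\delta^*}(D^*+qQ^*)(g)$. These two expressions differ by a swap of the roles of $D^*$ and $Q^*$, so strictly I should apply Proposition~\mref{constr2} with the parameter relabeled, or note that $f\cdot_{\delta^*}(Q^*+qD^*)(g) = f\cdot_{\delta^*}(q(D^* + q^{-1}Q^*))(g)$ for $q\neq 0$ and handle $q=0$ separately; cleaner still is to observe that Proposition~\mref{spec-def} / Proposition~\mref{constr2} is symmetric in the sense that $a\circ_{p,q}b = a\cdot(pD+qQ)(b)$ is Novikov for all $(p,q)$ (the final remark before this subsection), so $f\cdot_{\delta^*}(Q^*+qD^*)(g)$ is automatically Novikov. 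I will use that symmetric formulation to avoid the relabeling nuisance.

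The main obstacle, such as it is, is purely bookkeeping: keeping straight which operator plays the role of "$D$" and which plays "$Q$" under dualization, and correctly propagating transposes through $\delta^*$ (a multiplication) versus $\delta$ (a comultiplication). There is no analytic or combinatorial difficulty; once the identification $(\Delta_q)^*=\circ_q^{A^*}$ is in place, the result is immediate from Propositions~\mref{constr2} and~\mref{pro:dual-adm} together with the Novikov algebra/coalgebra duality. I would write the proof in three short sentences: cite Proposition~\mref{pro:dual-adm} to pass to $A^*$, cite Proposition~\mref{constr2} (in the $\circ_{p,q}$ form) to get a Novikov algebra on $A^*$, and verify that this Novikov product is $(\Delta_q)^*$, so dualizing back gives that $(A,\Delta_q)$ is a Novikov coalgebra.

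\begin{proof}
By Proposition~\mref{pro:dual-adm}, $(A^*,\delta^*,Q^*,D^*)$ is an admissible commutative differential algebra, with derivation $D^*$ and admissible operator $Q^*$. By Proposition~\mref{constr2} (in the form $a\circ_{p,q}b:=a\cdot(pD+qQ)(b)$ noted in the remark following it), for every $q\in\bfk$ the binary operation
\begin{equation*}
f\bullet g:=f\cdot_{\delta^*}(Q^*+qD^*)(g),\qquad f,g\in A^*,
\end{equation*}
makes $(A^*,\bullet)$ a Novikov algebra. On the other hand, for all $f,g\in A^*$ and $a\in A$,
\begin{equation*}
\langle (\Delta_q)^*(f\otimes g),a\rangle=\langle f\otimes g,(\id\otimes(Q+qD))\delta(a)\rangle=\langle f\cdot_{\delta^*}(Q^*+qD^*)(g),a\rangle,
\end{equation*}
so $(\Delta_q)^*=\bullet$ as a map $A^*\otimes A^*\to A^*$. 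Hence $(A^*,(\Delta_q)^*)$ is a Novikov algebra, and therefore $(A,\Delta_q)$ is a Novikov coalgebra by the duality between Novikov algebras and Novikov coalgebras. This holds for all $q\in\bfk$.
\end{proof}
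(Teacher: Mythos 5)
Your proof is correct and follows essentially the same route as the paper's: dualize via Proposition~\mref{pro:dual-adm}, apply Proposition~\mref{constr2} to $(A^*,\delta^*,Q^*,D^*)$, and identify $(\Delta_q)^*$ with the resulting product. The only quibble is that the ``swap of roles'' you worried about does not occur: in the dual quadruple $(A^*,\delta^*,Q^*,D^*)$, matching the pattern $(A,\cdot,D,Q)$, the derivation is $Q^*$ and the admissible operator is $D^*$ (as one checks by dualizing Eq.~\meqref{eq:RSI2}), so $\delta^*(f\otimes(Q^*+qD^*)g)$ is exactly the product $\circ_q$ of Proposition~\mref{constr2} and the detour through the $\circ_{p,q}$ remark is unnecessary.
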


\begin{proof}
Let $f$, $g\in A^\ast$ and $a\in A$. Then we have
\vspb
\begin{eqnarray*}
&\langle f\otimes g, \Delta_q(a) \rangle = \langle f\otimes g, (\id\otimes (Q+qD))\delta(a)\rangle
= \langle \delta^\ast(f\otimes (Q^\ast+qD^\ast)g), a\rangle.&
\end{eqnarray*}
Let $f\circ_q g:=\delta^\ast(f\otimes (Q^\ast+qD^\ast)g)$ for all $f$, $g\in A^\ast$. By Proposition \mref{pro:dual-adm}, $(A^*,\delta^*, Q^*,$ $D^*)$ is an admissible commutative differential algebra. Then by Proposition \mref{constr2}, $(A^\ast, \circ_q)$ is a Novikov algebra. Therefore,
$(A, \Delta_q)$ is a Novikov coalgebra.
\end{proof}
\vspb
\begin{rmk}
This result can be also proved by a direct
checking. Therefore, it holds when $A$
is infinite-dimensional.
\vspb
\end{rmk}
\begin{rmk}
The special case that $(A, \Delta_0)$ is a Novikov coalgebra was obtained in \mcite{KUZ}. On the other hand, let $(A, \Delta)$ be a Novikov coalgebra and $\Delta_q$ be a coproduct on $A$ defined by
\begin{eqnarray}\mlabel{eq:deformc}
\Delta_q(a):=\Delta(a)+q \Theta(a), \;\; a\in A,
\end{eqnarray}
where $\Theta: A\rightarrow A\otimes A$ is a linear map and $q\in {\bf k}$. Then $(A,\Delta_q)$ is called an {\bf infinitesimal deformation} of $(A,\Delta)$ if $(A, \Delta_q)$ is a Novikov coalgebra for all $q\in {\bf k}$.
In this sense,  the Novikov coalgebra $(A, \Delta_q)$ defined by Eq.~(\mref{eq:cons2}) is an infinitesimal deformation of $(A, \Delta_0)$.
\end{rmk}

Finally, we recall the notion of commutative and cocommutative
differential ASI bialgebras, reformulated from the original form introduced in~\mcite{LLB}.

\begin{defi} \mcite{LLB}
A quintuple $(A,\cdot, \delta,D,Q)$ is called a commutative and cocommutative {\bf differential antisymmetric infinitesimal bialgebra} or simply a commutative and cocommutative  {\bf differential ASI bialgebra} if
\begin{enumerate}
    \item $(A,\cdot,\delta)$ is an ASI bialgebra,
    \item  $(A,\cdot, D, Q)$ is an
    admissible commutative differential algebra, and
    \item  $(A,\delta, Q, D)$ is an admissible cocommutative differential coalgebra.
\end{enumerate}
\mlabel{de:admdifbialg}
\vspb
\end{defi}

\subsection{Construction of Novikov bialgebras from commutative and cocommutative differential ASI bialgebras}
Now we construct Novikov bialgebras from commutative and cocommutative differential ASI bialgebras.

\begin{pro}\mlabel{prostr1}
Let $(A, \cdot, \delta, D, Q)$ be a commutative and cocommutative
differential ASI bialgebra and $q\in {\bf k}$. Define a binary
operation $\circ_q$ and a coproduct $\Delta_q$ on $A$ by
Eqs.~\meqref{eq:k} and \meqref{eq:cons2} respectively. Then $(A,
\circ_q, \Delta_q)$ is a Novikov bialgebra if and only if, using the notation $\delta(a)=\sum a_{(1)}\otimes
a_{(2)}$, the following equalities hold for all $a, b\in A$.
{\wuhao
\begin{equation}
\begin{split}
&\sum\hspace{-.1cm} a_{(1)}\otimes \Big((q^2-q-1)D(b)\!\cdot\! (Q+D)a_{(2)}\!-\!(D+Q)(b)\!\cdot\! Q(a_{(2)})+q^2(D(b)\!\cdot\! Q(a_{(2)})-D(a_{(2)})\!\cdot\! Q(b))\\
&\quad+(q^2-2q-1)b\!\cdot\! D(D+Q)a_{(2)}+(q^2-q)(b\!\cdot\! DQ(a_{(2)})-b\!\cdot\! QD(a_{(2)}))-2q b\!\cdot\! Q(Q+D)a_{(2)}\Big) \\
&\quad+(1-2q^2+q)\sum D(a_{(1)})\otimes b\cdot (Q+D)a_{(2)}=0,
\end{split}
\mlabel{eq:bialg1}
\end{equation}
}
{\wuhao
\begin{equation}
    \begin{split}
&(1+2q)\sum\Big((D+Q)(a)\cdot b_{(1)}\otimes (Q+qD)b_{(2)}-(Q+qD)b_{(1)}\otimes (D+Q)(a)\cdot b_{(2)}\Big)\\
&\quad=(1+2q)\sum\Big((D+Q)(b)\cdot a_{(1)}\otimes (Q+qD)a_{(2)}-(Q+qD)a_{(1)}\otimes (D+Q)(b)\cdot a_{(2)}\Big),
\end{split}
\mlabel{eq:bialg2}
\end{equation}}
{\wuhao\begin{equation}
\begin{split}
&(1+2q)(\id \otimes L_{\cdot}((D+qQ)a)-L_{\cdot}((D+qQ)a)\otimes \id)\sum b_{(1)}\otimes (D+Q)b_{(2)}\\
&\quad=(1+2q)(\id \otimes L_{\cdot}((D+qQ)b)-L_{\cdot}((D+qQ)b)\otimes \id)\sum a_{(1)}\otimes (D+Q)a_{(2)}.
\end{split}
\mlabel{eq:bialg3}
\end{equation}}
\end{pro}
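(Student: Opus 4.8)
The plan is to verify that the triple $(A,\circ_q,\Delta_q)$ satisfies the three Novikov bialgebra compatibility conditions \meqref{Lb5}, \meqref{Lb6}, \meqref{Lb7}, and to read off precisely which residual identities are forced. We already know from Proposition~\mref{constr2} that $(A,\circ_q)$ is a Novikov algebra and from Proposition~\mref{pro:co-cons} that $(A,\Delta_q)$ is a Novikov coalgebra, so it only remains to treat the three mixed axioms. The strategy for each is the same: expand $\circ_q$ via $a\circ_q b=a\cdot(D+qQ)(b)$ and $\Delta_q$ via $\Delta_q(a)=(\id\otimes(Q+qD))\delta(a)$ in Sweedler notation $\delta(a)=\sum a_{(1)}\otimes a_{(2)}$, then push everything through the ASI bialgebra compatibility \meqref{ASI1} (note \meqref{ASI2} is automatic here by commutativity and cocommutativity), the two admissibility relations \meqref{eq:RSI1} and \meqref{eq:RSI2}, the fact that $D$ and $Q$ are derivations and coderivations, and the cocommutativity $\delta=\tau\delta$. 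After collecting terms, group them by the "symmetric in $a,b$" part — which must cancel identically using that $(A,\cdot)$ is commutative and $\delta$ cocommutative — and the remaining part, which is exactly the stated obstruction.

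More concretely: for \meqref{Lb5} one computes $\Delta_q(a\circ_q b)=(\id\otimes(Q+qD))\delta\big(a\cdot(D+qQ)(b)\big)$, applies \meqref{ASI1} to split $\delta(a\cdot(D+qQ)(b))$, then uses \meqref{eq:RSI2} to convert $\delta\circ D$ and $\delta\circ Q$ into expressions involving $D\otimes\id$, $\id\otimes Q$ acting on $\delta$, and uses the (co)derivation property of $D,Q$ to move the outer $Q+qD$ inside. On the right-hand side of \meqref{Lb5}, expand $R_{\circ_q}(b)$ and $L_{\star_q}(a)$ where $a\star_q b=a\cdot(D+qQ)(b)+b\cdot(D+qQ)(a)$; here $\tau\Delta_q(b)=(( Q+qD)\otimes\id)\delta(b)$ by cocommutativity, so $\Delta_q(b)+\tau\Delta_q(b)=(\id\otimes(Q+qD)+(Q+qD)\otimes\id)\delta(b)$, and the coderivation property identifies this with $\delta((Q+qD)(b))$ — this is the key simplification that makes the computation tractable. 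Subtracting the two sides and repeatedly applying $Q(a\cdot b)=Q(a)\cdot b-a\cdot D(b)$ (and its commuted form $Q(a\cdot b)=a\cdot Q(b)-D(a)\cdot b$) to normalize every term into the shape $x_{(1)}\otimes (\text{monomial in }D,Q\text{ applied to }x_{(2)}\text{ and to }a\text{ or }b)$ yields, after collecting the coefficients of the independent tensor monomials, exactly Eq.~\meqref{eq:bialg1}. The same procedure applied to \meqref{Lb6} and \meqref{Lb7} produces \meqref{eq:bialg2} and \meqref{eq:bialg3}; in these two the overall factor $(1+2q)$ emerges naturally because the "unconditional" part of the identity (the part not carrying this factor) cancels by commutativity/cocommutativity, mirroring why \meqref{ASI2} was automatic.

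The main obstacle will be purely bookkeeping: each of the three axioms, once fully expanded, involves on the order of a dozen tensor monomials built from two-fold compositions of $D$ and $Q$, and the cancellations rely on systematically rewriting every occurrence of $Q$ applied to a product via \meqref{eq:RSI1} (choosing the right one of the two equivalent forms so that terms pair up) and every occurrence of $\delta\circ D$ or $\delta\circ Q$ via \meqref{eq:RSI2}. A secondary subtlety is ensuring the "symmetric" parts genuinely vanish rather than contributing: this is where one uses that $\delta$ is cocommutative to replace $\tau$ by the identity at the appropriate moment and that $\cdot$ is commutative to merge a term with its flip. Once the normalization conventions are fixed, the verification is a finite check, and the "only if" direction is immediate since each compatibility condition \meqref{Lb5}--\meqref{Lb7} is equivalent, after this normalization, to the vanishing of the corresponding expression \meqref{eq:bialg1}--\meqref{eq:bialg3}.
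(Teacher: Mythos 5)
Your overall strategy coincides with the paper's: expand $\circ_q$ and $\Delta_q$, push through \meqref{ASI1} and the admissibility relations, and collect the residual obstruction for each of \meqref{Lb5}--\meqref{Lb7}. However, there is a genuine error in the hypotheses you allow yourself. In Proposition~\mref{prostr1} the data only give that $D$ is a derivation on $(A,\cdot)$ and $Q$ is a coderivation on $(A,\delta)$; the statements that $Q$ is a derivation on $(A,\cdot)$ and that $D$ is a coderivation on $(A,\delta)$ are \emph{not} hypotheses here --- by Proposition~\mref{pp:bialgder0} they are equivalent to the extra conditions \meqref{eq:conda} and \meqref{eq:condb} imposed only later in Theorem~\mref{bialgebra-constr}\,\meqref{it:const1}. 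Your "key simplification"
$$\Delta_q(b)+\tau\Delta_q(b)=(\id\otimes(Q+qD)+(Q+qD)\otimes\id)\delta(b)\stackrel{?}{=}\delta((Q+qD)(b))$$
uses that $Q+qD$ is a coderivation, hence (for $q\neq 0$) that $D$ is a coderivation, which is false in general. The correct identity, obtained from \meqref{eq:RSI2} together with cocommutativity (which yields $\sum (D+Q)b_{(1)}\otimes b_{(2)}=\sum b_{(1)}\otimes (D+Q)b_{(2)}$), is
$$\Delta_q(b)+\tau\Delta_q(b)=-\delta((D+qQ)b)+(2q+1)\sum b_{(1)}\otimes (D+Q)b_{(2)},$$
and the correction term $(2q+1)\sum b_{(1)}\otimes (D+Q)b_{(2)}$ is precisely the source of the $(1+2q)$ factors and the $(D+Q)$ expressions in \meqref{eq:bialg2} and \meqref{eq:bialg3}.

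Concretely, under the assumptions you import, $a\cdot (D+Q)(b)=0$ and $\sum b_{(1)}\otimes (D+Q)b_{(2)}=0$, so \meqref{eq:bialg1}--\meqref{eq:bialg3} all become trivially true and your computation would show that \meqref{Lb5}--\meqref{Lb7} hold unconditionally. That proves (a special case of) Theorem~\mref{bialgebra-constr}\,\meqref{it:const1}, but not the stated equivalence of Proposition~\mref{prostr1}, which must hold for a general commutative and cocommutative differential ASI bialgebra where these extra identities fail (see Example~\mref{examp-Novikov-2}, where the obstruction equations are genuinely nontrivial). To repair the argument you must carry the correction terms throughout, exactly as the paper does.
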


\begin{proof}
By Propositions \mref{constr2} and \mref{pro:co-cons},
$(A,\circ_q)$ is a Novikov algebra and $(A, \Delta_q)$ is a
Novikov coalgebra. Let $a,b\in A$.
Then we have
{\wuhao
\begin{eqnarray*}
&&\Delta_q(a\circ_{q} b)-(R_{\circ_{q}}(b)\otimes \id)\Delta_q(a)-(\id\otimes L_{\star_q}(a))(\Delta_q(b)+\tau\Delta_q(b))\\
&&\quad=\Delta_q(a\cdot (D+qQ)b)-(R_{\circ_q}(b)\otimes \id)\Big(\sum a_{(1)}\otimes(Q+qD) a_{(2)}\Big)\\
&&\qquad-(\id\otimes L_{\star_q}(a))\Big(\sum b_{(1)}\otimes (Q+qD)b_{(2)}+(Q+qD)b_{(1)}\otimes b_{(2)}\Big)\\
&&\quad=(\id\otimes (Q+qD))\delta(a\cdot (D+qQ)b)-\sum a_{(1)}\circ_{q} b\otimes (Q+qD)a_{(2)}\\
&&\qquad-\sum b_{(1)}\otimes (a\circ_{q}(Q+qD)b_{(2)}+((Q+qD)b_{(2)})\circ_{q}a)\\
&&\qquad-\sum (Q+qD)b_{(1)}\otimes(a\circ_{q}b_{(2)}+b_{(2)}\circ_{q}a)\\
&&\quad=(\id\otimes (Q+qD))((\id\otimes L_{\cdot}(a))\delta((D+qQ)b)+(L_{\cdot}((D+qQ)b)\otimes \id)\delta(a))\\
&&\qquad-\sum a_{(1)}\cdot (D+qQ)b\otimes (Q+qD)a_{(2)}-\sum b_{(1)}\otimes a\cdot (D+qQ)(Q+qD)b_{(2)}\\
&&\qquad-\sum b_{(1)}\otimes(Q+qD)b_{(2)}\cdot (D+qQ)a-\sum (Q+qD)b_{(1)}\otimes (a\cdot (D+qQ)b_{(2)}+b_{(2)}\cdot (D+qQ)a)\\
&&\quad=(\id\otimes (Q+qD))((\id\otimes L_{\cdot}(a))((D\otimes \id-\id\otimes Q)\delta(b)+q(\id\otimes Q+Q\otimes \id)\delta(b)))\\
&&\qquad+\sum (D+qQ)(b)\cdot a_{(1)}\otimes(Q+qD) a_{(2)}-\sum a_{(1)}\cdot (D+qQ)b\otimes (Q+qD)a_{(2)}\\
&&\qquad-\sum b_{(1)}\otimes a\cdot (D+qQ)(Q+qD)b_{(2)}-\sum b_{(1)}\otimes ((Q+qD)b_{(2)})\cdot (D+qQ)a\\
&&\qquad-\sum (Q+qD)b_{(1)}\otimes (a\cdot (D+qQ)b_{(2)}+b_{(2)}\cdot (D+qQ)a)\\
&&\quad=\sum b_{(1)}\otimes (-(Q+qD)(a\cdot Q(b_{(2)}))+q(Q+qD)(a\cdot Q(b_{(2)}))\\
&&\qquad-a\cdot (D+qQ)(Q+qD)b_{(2)}-((Q+qD)b_{(2)})\cdot (D+qQ)a)\\
&&\qquad+\sum D(b_{(1)})\otimes ((Q+qD)(a\cdot b_{(2)})-qa\cdot (D+qQ)b_{(2)}-qb_{(2)}\cdot (D+qQ)a)\\
&&\qquad+\sum Q(b_{(1)})\otimes (q(Q+qD)(a\cdot b_{(2)})-a\cdot (D+qQ)b_{(2)}-b_{(2)}\cdot (D+qQ)a).
\end{eqnarray*}
}
One can directly check that
{\wuhao \begin{eqnarray*}
&&(Q+qD)(a\cdot b_{(2)})-qa\cdot (D+qQ)b_{(2)}-qb_{(2)}\cdot (D+qQ)a\\
&&\quad=(1-q^2)Q(a)\cdot b_{(2)}-a\cdot D(b_{(2)})-q^2a\cdot Q(b_{(2)}),\\
&&q(Q+qD)(a\cdot b_{(2)})-a\cdot (D+qQ)b_{(2)}-b_{(2)}\cdot (D+qQ)a\\
&&\quad =(q^2-q-1)a\cdot D(b_{(2)})+(q^2-1)D(a)\cdot b_{(2)}-qa\cdot Q(b_{(2)}),\\
\end{eqnarray*}}
and
{\wuhao\begin{eqnarray*}
&&(1-q^2)Q(a)\cdot b_{(2)}-a\cdot D(b_{(2)})-q^2a\cdot Q(b_{(2)})\\
&&\qquad-((q^2-q-1)a\cdot D(b_{(2)})+(q^2-1)D(a)\cdot b_{(2)}-qa\cdot Q(b_{(2)}))\\
&&\quad=(1-2q^2+q)a\cdot (Q+D)b_{(2)}.
\end{eqnarray*}}
Note that $(D\otimes \id-\id\otimes Q)\delta(b)=(\id\otimes D-Q\otimes \id)\delta(b)$. Therefore, we get
\begin{eqnarray}\mlabel{eq:adm-cocond}
\sum (D+Q)b_{(1)}\otimes b_{(2)}=\sum b_{(1)}\otimes (D+Q)b_{(2)}.
\end{eqnarray}
Then one obtains
{\wuhao
\begin{eqnarray*}
&&\Delta_q(a\circ_{q} b)-(R_{\circ_{q}}(b)\otimes \id)\Delta_q(a)-(\id\otimes L_{\star_q})(\Delta_q(b)+\tau\Delta_q(b))\\
&&\quad=\sum b_{(1)}\otimes (-(Q+qD)(a\cdot Q(b_{(2)}))+q(Q+qD)(a\cdot Q(b_{(2)}))\\
&&\qquad-a\cdot (D+qQ)(Q+qD)b_{(2)}-((Q+qD)b_{(2)})\cdot (D+qQ)a)\\
&&\qquad+\sum (D+Q)(b_{(1)})\otimes ((q^2-q-1)a\cdot D(b_{(2)})+(q^2-1)D(a)\cdot b_{(2)}-qa\cdot Q(b_{(2)}))\\
&&\qquad+(1-2q^2+q)\sum D(b_{(1)})\otimes a\cdot (Q+D)b_{(2)}\\
&&\quad= \sum b_{(1)}\otimes (-(Q+qD)(a\cdot Q(b_{(2)}))+q(Q+qD)(a\cdot Q(b_{(2)}))-a\cdot (D+qQ)(Q+qD)b_{(2)}\\
&&\qquad-((Q+qD)b_{(2)})\cdot (D+qQ)a +(q^2-q-1)a\cdot D(D+Q)b_{(2)}+(q^2-1)D(a)\cdot (D+Q)b_{(2)}\\
&&\qquad-qa\cdot Q(D+Q)b_{(2)})+(1-2q^2+q)\sum D(b_{(1)})\otimes a\cdot (Q+D)b_{(2)}\\
&&\quad=\sum b_{(1)}\otimes ((q^2-q-1)D(a)\cdot (Q+D)b_{(2)}-(D+Q)a\cdot Q(b_{(2)})+q^2(D(a)\cdot Q(b_{(2)})-D(b_{(2)})\cdot Q(a))\\
&&\qquad+(q^2-2q-1)a\cdot D(D+Q)(b_{(2)})+(q^2-q)(a\cdot DQ(b_{(2)})-a\cdot QD(b_{(2)}))-2q a\cdot Q(Q+D)b_{(2)})\\
&&\qquad+(1-2q^2+q)\sum D(b_{(1)})\otimes a\cdot (Q+D)b_{(2)}\\
&&\quad=0.
\end{eqnarray*}
}
Therefore, Eq. (\mref{Lb5}) holds if and only if Eq.
(\mref{eq:bialg1}) holds.

Similarly, we have
\vspb
{\wuhao \begin{eqnarray*}
&&(L_{ \star_q}(a)\otimes \id)\Delta_q(b)-(\id\otimes L_{\star_q}(a))\tau \Delta_q(b)\\
&&\quad=\sum ((a\cdot (D+qQ)b_{(1)}+b_{(1)}\cdot (D+qQ)a)\otimes (Q+qD)b_{(2)}\\
&&\qquad-(Q+qD)b_{(1)}\otimes (a\cdot (D+qQ)b_{(2)}+b_{(2)}\cdot (D+qQ)a))\\
&&\quad=\sum (-(Q+qD)(a\cdot b_{(1)})+(q+1)(D+Q)(a)\cdot b_{(1)}+qa\cdot (D+Q)b_{(1)})\otimes (Q+qD)b_{(2)}\\
&&\qquad-\sum (Q+qD)b_{(1)}\otimes (-(Q+qD)(a\cdot b_{(2)})+(q+1)(D+Q)(a)\cdot b_{(2)}+qa\cdot (D+Q)b_{(2)}))\\
&&\quad=\sum (-(Q+qD)(a\cdot b_{(1)})+(2q+1)(D+Q)(a)\cdot b_{(1)})\otimes (Q+qD)b_{(2)}\\
&&\qquad-\sum (Q+qD)b_{(1)}\otimes (-(Q+qD)(a\cdot b_{(2)})+(2q+1)(D+Q)(a)\cdot b_{(2)})).
\end{eqnarray*}}
Since $\sum a_{(1)}\cdot b\otimes a_{(2)}-a_{(1)}\otimes
b\cdot a_{(2)}=\sum b_{(1)}\cdot a\otimes b_{(2)}-b_{(1)}\otimes
a\cdot b_{(2)}$, we have
{\wuhao\begin{eqnarray*}
&&-\sum (Q+qD)(a\cdot b_{(1)})\otimes (Q+qD)b_{(2)}+\sum (Q+qD)b_{(1)}\otimes (Q+qD)(a\cdot b_{(2)})\\
&&\quad=-\sum (Q+qD)(b\cdot a_{(1)})\otimes (Q+qD)a_{(2)}+\sum (Q+qD)a_{(1)}\otimes (Q+qD)(b\cdot a_{(2)}).
\end{eqnarray*}}
Therefore, Eq. (\mref{Lb6}) holds if and only if Eq. (\mref{eq:bialg2}) holds.

Note that
\vspb
{\wuhao\begin{eqnarray*}
\Delta_q(b)+\tau\Delta_q(b)&=& \sum (b_{(1)}\otimes (Q+qD)b_{(2)}+(Q+qD)b_{(1)}\otimes b_{(2)})\\
&=& -\delta((D+qQ)b)+q\sum b_{(1)}\otimes (D+Q)b_{(2)}+(q+1)\sum (D+Q)b_{(1)}\otimes b_{(2)}\\
&=& -\delta((D+qQ)b)+(2q+1)\sum b_{(1)}\otimes (D+Q)b_{(2)}.
\end{eqnarray*}}
Therefore,
\vspb
{\wuhao\begin{eqnarray*}
&&(\id\otimes R_{\circ_{q}}(a)-R_{\circ_{q}}(a)\otimes \id)(\Delta_q(b)+\tau\Delta_q(b))\\
&&\quad=(\id\otimes L_{\cdot}((D+qQ)a)-L_{\cdot}((D+qQ)a)\otimes \id)(-\delta((D+qQ)b)+(2q+1)\sum b_{(1)}\otimes (D+Q)b_{(2)}).
\end{eqnarray*}}
By Eq.~(\mref{ASI2}), we conclude that Eq. (\mref{Lb7}) holds if and only if Eq. (\mref{eq:bialg3})
holds.

Now the proof is completed.
\vspb
\end{proof}

\begin{thm}
Let $(A,\cdot, \delta,D,Q)$ be a commutative and cocommutative
differential ASI bialgebra. Let $q\in \bfk$ be given. Define a binary operation $\circ_q$ and a
coproduct $\Delta_q$ on $A$ by Eqs.~\meqref{eq:k} and \meqref{eq:cons2} respectively.
\begin{enumerate}
\item \label{it:aaa} When $q=-\frac{1}{2}$, then $(A,
\circ_{-\frac{1}{2}}, \Delta_{-\frac{1}{2}})$ is a Novikov
bialgebra without any condition. \mlabel{it:const2}
\item\label{it:bbb}  For other $q\in \bfk$, if
\vspb
\begin{eqnarray}
&a\cdot Q(b)=-a\cdot D(b),\;\;a, b\in A,
\mlabel{eq:conda}
\\
&(\id\otimes
Q)\delta =-(\id \otimes D)\delta,  \mlabel{eq:condb}
\end{eqnarray}
then $(A, \circ_q, \Delta_q)$ is a Novikov bialgebra.
In particular,
if $Q=-D$, then $(A, \circ_q, \Delta_q)$ is a Novikov bialgebra.
\mlabel{it:const1}
\end{enumerate}
\mlabel{bialgebra-constr}
\end{thm}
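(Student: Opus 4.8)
The plan is to apply Proposition~\ref{prostr1}, which reduces each assertion to verifying the three identities \eqref{eq:bialg1}, \eqref{eq:bialg2} and \eqref{eq:bialg3} for the given value of $q$. The structural feature that makes this feasible is that \eqref{eq:bialg2} and \eqref{eq:bialg3} each carry an overall scalar factor $(1+2q)$, while the last line of \eqref{eq:bialg1} carries the factor $(1-2q^2+q)$.

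For the case $q=-\frac{1}{2}$: here $1+2q=0$, so \eqref{eq:bialg2} and \eqref{eq:bialg3} hold automatically, and $1-2q^2+q=0$, so the last line of \eqref{eq:bialg1} also drops out. It then remains to show that the sum $\sum a_{(1)}\otimes(\cdots)$ in \eqref{eq:bialg1} vanishes. After substituting $q=-\frac{1}{2}$ and collecting coefficients, the bracketed expression becomes a fixed bilinear expression $X(b,a_{(2)})$ in its two inputs, built from $\cdot$, $D$, $Q$ and the second-order operators $D^2$, $Q^2$, $DQ$, $QD$. I would prove that $X(b,c)=0$ for all $b,c\in A$; this uses only the commutativity of $\cdot$ together with the identity $x\cdot(D+Q)(y)=(D+Q)(x)\cdot y$ valid in any admissible commutative differential algebra (the Example following Definition~\ref{de:addiff}), applied with $y$ replaced by $D(c)$ and by $Q(c)$ so as to convert $b\cdot D^2(c)$ and $b\cdot Q^2(c)$ into first-order terms, after which the remaining monomials cancel in pairs. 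In fact $X$ vanishes pointwise, so $\delta$ plays no role in this case beyond what Proposition~\ref{prostr1} has already absorbed.

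For general $q$ under the hypotheses \eqref{eq:conda} and \eqref{eq:condb}: the first says exactly that $x\cdot(D+Q)(y)=0$ for all $x,y\in A$ (equivalently $(D+Q)(x)\cdot y=0$, by commutativity), and the second says $(\id\otimes(D+Q))\delta=0$. From \eqref{eq:conda} I would first extract two consequences: applying the derivation $D$ gives $x\cdot D\big((D+Q)(y)\big)=0$, and applying $Q$ through the admissibility relation $Q(x\cdot z)=x\cdot Q(z)-D(x)\cdot z$ with $z=(D+Q)(y)$ gives $x\cdot Q\big((D+Q)(y)\big)=0$. Now every monomial of \eqref{eq:bialg1}, including the term $\sum D(a_{(1)})\otimes b\cdot(D+Q)a_{(2)}$ in its last line, except $q^2\big(D(b)\cdot Q(a_{(2)})-D(a_{(2)})\cdot Q(b)\big)$, is visibly of one of the forms $x\cdot(D+Q)(y)$, $x\cdot D((D+Q)(y))$, $x\cdot Q((D+Q)(y))$ (for the term $b\cdot DQ(a_{(2)})-b\cdot QD(a_{(2)})$ one first rewrites it as $b\cdot D((D+Q)a_{(2)})-b\cdot(D+Q)(Da_{(2)})$), hence vanishes; the exceptional term vanishes since $D(b)\cdot Q(a_{(2)})=-D(b)\cdot D(a_{(2)})=-D(a_{(2)})\cdot D(b)=D(a_{(2)})\cdot Q(b)$ by \eqref{eq:conda} and commutativity. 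In \eqref{eq:bialg2} every term contains a factor $(D+Q)(a)\cdot b_{(i)}$ or $(D+Q)(b)\cdot a_{(i)}$ and vanishes by \eqref{eq:conda}; in \eqref{eq:bialg3} the inner sums $\sum b_{(1)}\otimes(D+Q)b_{(2)}$ and $\sum a_{(1)}\otimes(D+Q)a_{(2)}$ vanish by \eqref{eq:condb}. Hence all three identities hold and Proposition~\ref{prostr1} gives the Novikov bialgebra; the special case $Q=-D$ is immediate, since then $D+Q=0$ and both hypotheses hold trivially.

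The step I expect to be the main obstacle is the termwise identity $X(b,c)=0$ in the case $q=-\frac{1}{2}$: one must substitute the fraction carefully into the six monomials of \eqref{eq:bialg1}, keep track of the rational coefficients, and choose exactly the right specializations of the balanced identity so that the degree-two terms are eliminated and everything cancels. This is the delicate cancellation that becomes transparent, later in the paper, through the double-construction and Manin-triple description; at this stage it is a direct computation. The general-$q$ case is routine by comparison, once one recognizes that \eqref{eq:conda} amounts to $A\cdot(D+Q)(A)=0$ together with its images under $D$ and $Q$.
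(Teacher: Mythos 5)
Your proposal is correct and follows essentially the same route as the paper's own proof: reduce everything to Proposition~\ref{prostr1} and verify Eqs.~\eqref{eq:bialg1}--\eqref{eq:bialg3}, using the factor $1+2q$ (and $1-2q^2+q$) together with the balanced identity $x\cdot(D+Q)(y)=(D+Q)(x)\cdot y$ for $q=-\frac{1}{2}$, and Eqs.~\eqref{eq:conda}--\eqref{eq:condb} for general $q$. Your pointwise consequences $x\cdot D\big((D+Q)(y)\big)=0$ and $x\cdot Q\big((D+Q)(y)\big)=0$ of Eq.~\eqref{eq:conda} kill the second-order terms of Eq.~\eqref{eq:bialg1} slightly more directly than the paper's reduction to Eq.~\eqref{eq:casea2}, but this is the same argument in substance, and the advertised cancellation for $q=-\frac{1}{2}$ does go through exactly as you describe.
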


\begin{proof}
By Proposition \mref{prostr1}, we only need to check that Eqs. (\mref{eq:bialg1}), (\mref{eq:bialg2}) and (\mref{eq:bialg3}) hold.

\noindent
\meqref{it:const2} Obviously, Eqs. (\mref{eq:bialg2}) and (\mref{eq:bialg3}) hold when $q=-\frac{1}{2}$.
Also note that
{\wuhao\begin{eqnarray*}
    &&-\frac{1}{4}D(b)\cdot (D+Q)a_{(2)}-(D+Q)(b)\cdot Q(a_{(2)})+\frac{1}{4}(D(b)\cdot Q(a_{(2)})-D(a_{(2)})\cdot Q(b))\\
    &&\quad +\frac{1}{4}b\cdot D(D+Q)a_{(2)}+\frac{3}{4}(b\cdot DQ(a_{(2)})-b\cdot QD(a_{(2)}))+b\cdot Q(D+Q)a_{(2)}\\
    &=&-\frac{1}{4}(D+Q)(b)\cdot D(a_{(2)})-(D+Q)(b)\cdot Q(a_{(2)})+\frac{1}{4}b\cdot D^2(a_{(2)})\\
    &&\quad +b\cdot DQ(a_{(2)})-\frac{3}{4}b\cdot QD(a_{(2)})+b\cdot Q(D+Q)a_{(2)}\\
    &=&-\frac{1}{4}b\cdot (D+Q)D(a_{(2)})-b\cdot (D+Q)Q(a_{(2)})+\frac{1}{4}b\cdot D^2(a_{(2)})\\
    &&\quad +b\cdot DQ(a_{(2)})-\frac{3}{4}b\cdot QD(a_{(2)})+b\cdot Q(D+Q)a_{(2)}\\
    &=&0.
\end{eqnarray*}}
Then Eq. (\mref{eq:bialg1}) holds. Therefore, $(A, \circ_{-\frac{1}{2}}, \Delta_{-\frac{1}{2}})$ is a Novikov bialgebra.

\smallskip

\noindent
\meqref{it:const1} Note that in this case we have
\begin{eqnarray}
\mlabel{eq:casea1}a\cdot (D+Q)b=0,\;\;\;\sum a_{(1)}\otimes (D+Q)a_{(2)}=0, \;\; a, b\in A,
 \end{eqnarray}
where $\delta(a)=\sum a_{(1)}\otimes a_{(2)}$. 

It is easy to see that Eqs. (\mref{eq:bialg2}) and
(\mref{eq:bialg3}) hold.
 Moreover,  by Eq. (\mref{eq:casea1}), one can directly obtain that Eq. (\mref{eq:bialg1})
 holds if and only if
\begin{eqnarray}
&&\mlabel{eq:casea2}\sum a_{(1)}\otimes (q^2(D(b)\cdot
Q(a_{(2)})-D(a_{(2)})\cdot Q(b))+(q^2-q)(b\cdot DQ(a_{(2)})-b\cdot
QD(a_{(2)})))=0,\;\;a,b\in A.
\end{eqnarray}
Note that
\begin{eqnarray*}
&&D(b)\cdot Q(a_{(2)})-D(a_{(2)})\cdot Q(b)=D(b)\cdot Q(a_{(2)})+Q(a_{(2)})\cdot Q(b)=((D+Q)b)\cdot Q(a_{(2)})=0,\\
&&b\cdot DQ(a_{(2)})-b\cdot QD(a_{(2)})=b\cdot DQ(a_{(2)})+b\cdot
D^2(a_{(2)})=b\cdot D(D+Q)a_{(2)}.
\end{eqnarray*}
Then the left hand side of Eq. (\mref{eq:casea2}) becomes
$$ (q^2-q) \sum a_{(1)}\otimes b\cdot D(D+Q)a_{(2)}=(q^2-q)(\id \otimes L_\cdot(b))(\id\otimes D)\Big(\sum  a_{(1)}\otimes (D+Q)a_{(2)}\Big).$$
Hence Eq. (\mref{eq:casea2}) follows from
Eq.~\meqref{eq:casea1}. Therefore, $(A, \circ_q, \Delta_q)$ is a
Novikov bialgebra.

In particular, if $Q=-D$, then Eqs. (\mref{eq:conda}) and (\mref{eq:condb}) naturally hold. Therefore, $(A, \circ_q, \Delta_q)$ is a Novikov bialgebra.
\end{proof}

\begin{rmk}
By Theorem \mref{bialgebra-constr} \meqref{it:const1}, if Eqs. (\mref{eq:conda}) and (\mref{eq:condb}) are satisfied, then $(A, \circ_0,\Delta_0)$ is a Novikov bialgebra. On the other hand, we can define an infinitesimal deformation of a Novikov bialgebra as follows.
Let $(A,\circ,\Delta)$ be a Novikov bialgebra. Define a binary operation $\circ_q$ and a coproduct $\Delta_q$ for all $q\in \bfk$ by Eqs.~(\mref{eq:deform}) and (\mref{eq:deformc}) respectively through the linear maps $f:A\otimes A\rightarrow A$
and $\Theta :A\rightarrow A\otimes A$. Then $(A,\circ_q,\Delta_q)$ is called an {\bf infinitesimal deformation} of $(A,\circ, \Delta)$ if $(A, \circ_q, \Delta_q)$ is a Novikov bialgebra for all $q\in {\bf k}$.
Therefore, in this sense, $(A, \circ_q,\Delta_q)$ given in Theorem \mref{bialgebra-constr} \meqref{it:const1} is an infinitesimal deformation of the Novikov bialgebra  $(A, \circ_0,\Delta_0)$.
\end{rmk}

As introduced in~\cite{LLB}, a {\bf derivation} on an ASI bialgebra $(A,\cdot, \delta)$ is defined to be a linear operator on $A$ that is both a derivation on
$(A,\cdot)$ and a coderivation on $(A,\delta)$.

\begin{pro}\label{pro:cond}
\begin{enumerate}
\item \label{itemd1}Let $(A,\cdot,D,Q)$ be an admissible
commutative differential algebra. Then $Q$ is a derivation on
$(A,\cdot)$ if and only if  Eq.~\meqref{eq:conda} holds. \item
\label{itemd2}Let $(A,\delta,Q,D)$ be an
admissible cocommutative differential coalgebra. Then $D$ is a
coderivation on the coalgebra $(A,\delta)$ if and only if Eq.~
\meqref{eq:condb} holds. \item \label{itemd3}Let $(A,\cdot,
\delta,D,Q)$ be a commutative and    cocommutative differential
ASI bialgebra. Then $D$ and $Q$ are derivations on the ASI
bialgebra $(A,\cdot, \delta)$ if and only if
Eqs.~\meqref{eq:conda} and \meqref{eq:condb} hold.
\end{enumerate}
\mlabel{pp:bialgder0}
\end{pro}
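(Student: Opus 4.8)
The plan is to dispatch part~\meqref{itemd1} by a two-line comparison of defining identities, to obtain part~\meqref{itemd2} from part~\meqref{itemd1} by dualizing through Proposition~\mref{pro:dual-adm}, and finally to read off part~\meqref{itemd3} from the definition of a derivation on an ASI bialgebra. None of the steps is technically difficult; the only spot demanding genuine care is keeping straight which of $D^\ast, Q^\ast$ is the ``derivation'' and which is the ``admissible operator'' of the dual algebra in part~\meqref{itemd2}.

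For part~\meqref{itemd1}: admissibility of $Q$ to $(A,\cdot,D)$ is the identity $Q(a\cdot b)=Q(a)\cdot b-a\cdot D(b)$, while $Q$ being a derivation on $(A,\cdot)$ is $Q(a\cdot b)=Q(a)\cdot b+a\cdot Q(b)$; subtracting, the Leibniz rule for $Q$ holds for all $a,b$ if and only if $a\cdot Q(b)=-a\cdot D(b)$ for all $a,b$, which is exactly Eq.~\meqref{eq:conda}. Conversely, feeding Eq.~\meqref{eq:conda} into the admissibility identity recovers the Leibniz rule, so the equivalence is immediate.

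For part~\meqref{itemd2}: by Proposition~\mref{pro:dual-adm}, $(A,\delta,Q,D)$ is an admissible cocommutative differential coalgebra if and only if $(A^\ast,\delta^\ast,Q^\ast,D^\ast)$ is an admissible commutative differential algebra, in which the product is $\delta^\ast$, the derivation is $Q^\ast$, and the admissible operator is $D^\ast$; this last assignment is read off by transposing the defining relation $(D\otimes\id-\id\otimes Q)\delta(a)=\delta(D(a))$ of Eq.~\meqref{eq:RSI2}. Moreover $D$ is a coderivation on $(A,\delta)$ precisely when $D^\ast$ is a derivation on $(A^\ast,\delta^\ast)$. Now apply part~\meqref{itemd1} to $(A^\ast,\delta^\ast,Q^\ast,D^\ast)$, with the admissible operator $D^\ast$ playing the role of the ``$Q$'' there: the condition that $D^\ast$ be a derivation on $(A^\ast,\delta^\ast)$ becomes $f\cdot D^\ast(g)=-f\cdot Q^\ast(g)$ for all $f,g\in A^\ast$, with $\cdot=\delta^\ast$. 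Since the $f\otimes g$ span $A^\ast\otimes A^\ast$, this is the operator identity $\delta^\ast(\id\otimes D^\ast)=-\delta^\ast(\id\otimes Q^\ast)$, and transposing it gives $(\id\otimes D)\delta=-(\id\otimes Q)\delta$, i.e. Eq.~\meqref{eq:condb}. (Alternatively one may prove~\meqref{itemd2} directly, by the same elementary manipulation as in~\meqref{itemd1} but using Eq.~\meqref{eq:RSI2} in place of Eq.~\meqref{eq:RSI1}.)

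For part~\meqref{itemd3}: a derivation on the ASI bialgebra $(A,\cdot,\delta)$ is a linear map that is simultaneously a derivation on $(A,\cdot)$ and a coderivation on $(A,\delta)$. By Definition~\mref{de:admdifbialg}, in a commutative and cocommutative differential ASI bialgebra $(A,\cdot,\delta,D,Q)$ the map $D$ is automatically a derivation on $(A,\cdot)$ (since $(A,\cdot,D)$ is a differential algebra) and $Q$ is automatically a coderivation on $(A,\delta)$ (since $(A,\delta,Q)$ is a cocommutative differential coalgebra). Hence ``$D$ and $Q$ are both derivations on the ASI bialgebra $(A,\cdot,\delta)$'' reduces to the two leftover requirements, namely that $Q$ be a derivation on $(A,\cdot)$ and $D$ a coderivation on $(A,\delta)$; by parts~\meqref{itemd1} and~\meqref{itemd2} these are respectively equivalent to Eqs.~\meqref{eq:conda} and~\meqref{eq:condb}, which finishes the argument. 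The only real obstacle anywhere is the bookkeeping of duals in part~\meqref{itemd2}; the mathematical content is minimal.
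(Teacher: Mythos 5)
Your proof is correct and follows essentially the same route as the paper: part (a) is the identical two-line comparison of the admissibility identity with the Leibniz rule, and the paper simply declares (b) and (c) ``similar,'' which your dualization argument for (b) (with the correct identification of $D^\ast$ as the admissible operator of $(A^\ast,\delta^\ast,Q^\ast)$) and your reduction in (c) fill in faithfully. The only remark worth making is that the dualization route for (b) relies on the standing finite-dimensionality assumption, whereas the direct manipulation via Eq.~\meqref{eq:RSI2} that you mention parenthetically works in general.
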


\begin{proof}
(\mref{itemd1}) Since $(A,\cdot,D,Q)$ is an admissible commutative differential algebra,
Eq. (\mref{eq:RSI1}) holds. Note that $Q$ is a derivation on $(A,\cdot)$ if and only if $Q(a\cdot b)=Q(a)\cdot b+a\cdot Q(b)$ for all $a$, $b\in A$. Therefore, by Eq. (\mref{eq:RSI1}), $Q$ is a derivation on $(A,\cdot)$ if and only if  Eq.~\meqref{eq:conda} holds.

Items (\mref{itemd2}) and (\mref{itemd3}) can be proved similarly.
\end{proof}

Now Theorem \mref{bialgebra-constr} \meqref{it:const1} can be restated as follows.
\begin{cor}
Let $(A,\cdot, \delta,D,Q)$ be a commutative and    cocommutative differential ASI bialgebra.
If both $D$ and $Q$ are derivations on the
ASI bialgebra $(A,\cdot, \delta)$, then $(A, \circ_q, \Delta_q)$
is a Novikov bialgebra for all $q\in {\bf k}$, where $\circ_q$ and $\Delta_q$ are defined by Eqs.~\meqref{eq:k} and \meqref{eq:cons2} respectively.
\mlabel{pp:bialgder}
\end{cor}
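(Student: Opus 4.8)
The plan is to deduce the corollary directly from Theorem~\mref{bialgebra-constr}~\meqref{it:const1} together with Proposition~\mref{pp:bialgder0}~\meqref{itemd3}, so that essentially no new computation is needed. By hypothesis $(A,\cdot,\delta,D,Q)$ is a commutative and cocommutative differential ASI bialgebra in which $D$ and $Q$ are both derivations on the ASI bialgebra $(A,\cdot,\delta)$; by definition this means each of $D$ and $Q$ is simultaneously a derivation on the commutative associative algebra $(A,\cdot)$ and a coderivation on the cocommutative coassociative coalgebra $(A,\delta)$.

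First I would invoke Proposition~\mref{pp:bialgder0}~\meqref{itemd3}: since $(A,\cdot,\delta,D,Q)$ is a commutative and cocommutative differential ASI bialgebra and $D,Q$ are derivations on the ASI bialgebra $(A,\cdot,\delta)$, that proposition yields exactly Eqs.~\meqref{eq:conda} and \meqref{eq:condb}, namely $a\cdot Q(b)=-a\cdot D(b)$ for all $a,b\in A$ and $(\id\otimes Q)\delta=-(\id\otimes D)\delta$. (If one prefers to see this concretely rather than citing the proposition: $Q$ being a derivation gives $Q(a\cdot b)=Q(a)\cdot b+a\cdot Q(b)$, while admissibility of $Q$ to $(A,\cdot,D)$ gives $Q(a\cdot b)=Q(a)\cdot b-a\cdot D(b)$; subtracting produces \meqref{eq:conda}, and the dual argument using that $D$ is a coderivation and the admissibility condition \meqref{eq:RSI2} produces \meqref{eq:condb}.)

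Next, with Eqs.~\meqref{eq:conda} and \meqref{eq:condb} now established, I would apply Theorem~\mref{bialgebra-constr}~\meqref{it:const1} verbatim: for any $q\in\bfk$, the binary operation $\circ_q$ and coproduct $\Delta_q$ defined by Eqs.~\meqref{eq:k} and \meqref{eq:cons2} make $(A,\circ_q,\Delta_q)$ a Novikov bialgebra. This immediately gives the stated conclusion for all $q\in\bfk$, completing the proof.

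There is really no main obstacle here — the corollary is a reformulation, packaging the somewhat opaque analytic conditions \meqref{eq:conda}–\meqref{eq:condb} into the conceptually cleaner hypothesis "$D$ and $Q$ are derivations on the ASI bialgebra." The only point requiring a moment of care is making sure the equivalences in Proposition~\mref{pp:bialgder0} are applied in the correct direction (from "derivations" to the displayed identities, which is the direction needed), and noting that the hypothesis that \emph{both} $D$ and $Q$ are derivations is what feeds Proposition~\mref{pp:bialgder0}~\meqref{itemd3}; a proof could thus be written in two sentences citing Proposition~\mref{pp:bialgder0} and Theorem~\mref{bialgebra-constr}.
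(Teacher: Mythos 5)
Your proposal is correct and is exactly the paper's route: the paper explicitly presents Corollary~\ref{pp:bialgder} as a restatement of Theorem~\ref{bialgebra-constr}\,(\ref{it:const1}) obtained by translating the derivation hypothesis into Eqs.~(\ref{eq:conda}) and (\ref{eq:condb}) via Proposition~\ref{pp:bialgder0}\,(\ref{itemd3}). The direction of the equivalence is applied correctly and no further argument is needed.
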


In the following, we apply Theorem~\mref{bialgebra-constr} to construct two Novikov bialgebras.

\begin{ex}\mlabel{ex:Nov1}
Let $(A={\bf k}e_1\oplus {\bf k}e_2, \cdot)$ be the commutative
associative algebra whose
product is given by
\begin{eqnarray*}
e_1\cdot e_1=e_1,\;\;e_1\cdot e_2=e_2\cdot e_1=e_2,\;\;e_2\cdot e_2=0.
\end{eqnarray*}
Let $D$, $Q: A\rightarrow A$ be the linear maps given by
\begin{eqnarray*}
&&D(e_1)=0,\;\;D(e_2)=e_2,\;\;Q(e_1)=e_1,\;\;Q(e_2)=0.
\end{eqnarray*}
Define $\delta: A\rightarrow A\otimes A$ to be the linear map given by
\begin{eqnarray*}
\delta(e_1)=0,\;\;\delta(e_2)=e_2\otimes e_2.
\end{eqnarray*}
One checks that $(A, \cdot, \delta, D, Q)$ is a commutative and cocommutative differential ASI bialgebra.
Let $(A, \circ_{-\frac{1}{2}})$ be the Novikov algebra induced from $(A, \cdot, D, Q)$ whose product is given by
\begin{eqnarray*}
e_1\circ_{-\frac{1}{2}}e_1=-\frac{1}{2}e_1,\;\;e_1\circ_{-\frac{1}{2}}e_2=e_2,\;\;e_2\circ_{-\frac{1}{2}}e_1=-\frac{1}{2}e_2,\;\;e_2\circ_{-\frac{1}{2}}e_2=0.
\end{eqnarray*}
Let $(A, \Delta_{-\frac{1}{2}})$ be the Novikov coalgebra induced from $(A,\delta, Q, D)$ whose coproduct is given by
\begin{eqnarray*}
&&\Delta_{-\frac{1}{2}}(e_1)=0,\;\;\Delta_{-\frac{1}{2}}(e_2)=-\frac{1}{2}e_2\otimes e_2.
\end{eqnarray*}
By Theorem \mref{bialgebra-constr}, $(A, \circ_{-\frac{1}{2}}, \Delta_{-\frac{1}{2}})$ is a Novikov bialgebra.
\end{ex}

\begin{ex}\mlabel{conex1}
Let $({\bf k}[x],\cdot)$ be the polynomial algebra. Define a linear map $\delta: {\bf k}[x]\rightarrow {\bf k}[x]\otimes {\bf k}[x]$ by
\begin{eqnarray*}
\delta(1)=0,\;\;\delta(x^n)=x^{n-1}\otimes 1+x^{n-2}\otimes x+\cdots+x\otimes x^{n-2}+1\otimes x^{n-1}, \;\;\;\; n\geq 1.
\end{eqnarray*}
By  \cite[Example 2.3]{A1}, $({\bf k}[x], \cdot, \delta)$ is a commutative and cocommutative ASI bialgebra. Let $D=\frac{d}{dx}$. Obviously, $D$ is also a coderivation of $({\bf k}[x], \delta)$. Therefore, $({\bf k}[x], \cdot, \delta, D, -D)$ is a commutative and cocommutative differential ASI bialgebra. Then by Theorem \mref{bialgebra-constr},
there is a Novikov bialgebra $({\bf k}[x], \circ_q, \Delta_q)$ given by
{\wuhao\begin{eqnarray*}
&&x^m\circ_q x^n=(1-q)x^m \cdot D(x^n)=(1-q)nx^{m+n-1},\;\;\Delta_q(1)=(q-1)(\id\otimes D)\delta(1)=0, \;\;m, n\geq 0,\\
&&\Delta_q(x)=(q-1)(\id \otimes D)\delta(x)=0,\;\;\Delta_q(x^n)=(q-1)(\id\otimes D)\delta(x^n)=(q-1)\sum_{i=1}^{n-1}ix^{n-1-i}\otimes x^{i-1}, \;\; n\geq 2.
\end{eqnarray*}}
\end{ex}

Finally, we give a construction of infinite-dimensional Lie bialgebras from commutative and cocommutative differential ASI bialgebras.
First recall the construction of infinite-dimensional Lie bialgebras from Novikov bialgebras, following the notations in \mcite{HBG}.

Let $C=\bigoplus_{i\in \ZZ}C_i$ and $D=\bigoplus_{i\in \ZZ}D_i$ be $\ZZ$-graded vector spaces with all $C_i$ and $D_i$ finite-dimensional. Define the {\bf completed tensor product} of $C$ and $D$ to be the vector space
\begin{eqnarray*}
C\widehat{\otimes}D:=\prod_{i,j\in \ZZ}C_i\otimes D_j.
\vspb
\end{eqnarray*}
Thus a general term of $C\hatot  D$ is a possibly infinite sum
\vspace{-.2cm}
\begin{equation}\mlabel{eq:ssum}
 \sum_{i,j,\alpha} c_{i\alpha}\ot d_{j\alpha},
 \vspb
 \end{equation}
where $i,j\in \ZZ$ and $\alpha$ is in a finite index set (which might depend on $i$ and $j$).

With these notations, for linear maps $f:C\to C'$ and $g:D\to D'$, define
\vspb
$$ f\hatot g: C\hatot D \to C' \hatot D',
\quad \sum_{i,j,\alpha} c_{i\alpha}\ot d_{j\alpha}\mapsto  \sum_{i,j,\alpha} f(c_{i\alpha})\ot g(d_{j\alpha}).
\vspace{-.2cm}
$$
Also the twist map $\tau$ has its completion
\vspb
$$ \widehat{\tau}: C\hatot C \to C\hatot C, \quad
\sum_{i,j,\alpha} c_{i\alpha} \ot d_{j\alpha} \mapsto \sum_{i,j,\alpha} d_{j\alpha}\ot c_{i\alpha}.
\vspb
$$

\begin{defi}\mcite{HBG}
A {\bf \complete Lie coalgebra}
 is a pair $(L, \delta)$, where
$L=\oplus_{i\in \ZZ} L_i$
  is a $\ZZ$-graded vector space with all $L_i$ finite-dimensional and $\delta: L\rightarrow L\hatot L$ is a linear map satisfying
    \begin{eqnarray}
        \mlabel{lia1} \notag
        &&\delta(a)=-\widehat{\tau} \delta(a), \\
        \mlabel{lia2} \notag
        &&(\id\hatot \delta)\delta(a)-(\widehat{\tau}\hatot  \id)(\id\hatot  \delta)\delta(a)=(\delta\hatot  \id)\delta(a),\;\;\; a\in L.
    \end{eqnarray}

A {\bf \complete Lie bialgebra}
is a triple $(L, [\cdot,\cdot],\delta)$ such that
$(L, [\cdot, \cdot])$ is a Lie algebra, $(L,
\delta)$ is a \complete Lie coalgebra, and the following
compatibility condition holds.
\vspa
\begin{eqnarray} \mlabel{lia3}
\notag \delta([a,b])=(\ad_a\hatot  \id+\id \hatot
\ad_a)\delta(b)-(\ad_b\hatot  \id+\id \hatot
\ad_b)\delta(a),\;\;\;\;  a, b\in L,
\vspb
\end{eqnarray}
where $\ad_a (b)=[a,b]$ for all $a, b\in
L$.
\end{defi}
\begin{pro}\mlabel{Liebi-constr}\cite[Theorem 2.23]{HBG}
Let $(A, \circ, \Delta)$ be a Novikov bialgebra and $L=A\otimes {\bf k}[t,t^{-1}]$ be the Lie algebra defined by
\vspb
\begin{eqnarray*}
[at^m,bt^n]=m(a\circ b)t^{m+n-1}-n(b\circ a)t^{m+n-1}, \;\;\; a, b\in A, m, n\in \ZZ,
\end{eqnarray*}
where $at^m:=a\otimes t^m$. Define the linear map $\delta_L:L\rightarrow L\widehat{\otimes} L$ by
\begin{eqnarray}
\delta_L(at^m)=\sum_{i\in \ZZ}\sum (i+1)(a_{(1)}t^{-i-2}\otimes a_{(2)}t^{m+i}-a_{(2)}t^{m+i}\otimes a_{(1)}t^{-i-2}),\;\;a\in A, m\in \ZZ,
\end{eqnarray}
where $\Delta(a)=\sum a_{(1)}\otimes a_{(2)}$. Then $(L, [\cdot,\cdot], \delta_L)$ is a completed Lie bialgebra.
\end{pro}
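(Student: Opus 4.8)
The plan is to verify, one at a time, the three conditions defining a completed Lie bialgebra: that $(L,[\cdot,\cdot])$ is a Lie algebra, that $(L,\delta_L)$ is a completed Lie coalgebra, and that the cocycle compatibility condition holds. For the first, one recognizes the bracket as the classical Gelfand--Dorfman (Balinskii--Novikov) affinization: writing $f,g\in\bfk[t,t^{-1}]$ and $f'=\frac{d}{dt}f$, the bracket is $[a\otimes f,b\otimes g]=(a\circ b)\otimes f'g-(b\circ a)\otimes fg'$, and specializing $f=t^m,\ g=t^n$ recovers the stated formula. Antisymmetry is immediate; for the Jacobi identity one expands $[[a\otimes f,b\otimes g],c\otimes h]$ together with its two cyclic permutations, uses $(f'g)'=f''g+f'g'$ to regroup, and collects for each monomial in $t$ a coefficient that is an expression in $A$; one then checks that this expression vanishes by the left-symmetry axiom \eqref{lef} and the right-commutativity axiom \eqref{Nov}. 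This is a routine computation.

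For the coalgebra condition, anti-cocommutativity $\delta_L=-\widehat{\tau}\,\delta_L$ is evident from the manifestly skew shape of the defining formula. For the co-Jacobi identity, observe first that everything is automatically well defined: the two $t$-degrees occurring in any term of $\delta_L(at^m)$ sum to $m-2$, so for each fixed triple of $t$-degrees exactly one finite-dimensional contribution appears in each of $(\id\hatot\delta_L)\delta_L(at^m)$, $(\widehat{\tau}\hatot\id)(\id\hatot\delta_L)\delta_L(at^m)$ and $(\delta_L\hatot\id)\delta_L(at^m)$, which is exactly the kind of sum the completed tensor product $\prod_{i,j}L_i\otimes L_j$ is designed to absorb. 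One could now extract the coefficient of each triple of $t$-monomials---a quadratic expression in the two summation indices weighted by the $(i+1)(j+1)$-factors---and check directly that the identity reduces to the Novikov coalgebra axioms \eqref{Lc3} and \eqref{Lc4}. It is cleaner, though, to argue by duality: grading $L$ by $t$-degree identifies its graded dual with $A^*\otimes\bfk[t,t^{-1}]$, on which $\Delta^*$ defines a Novikov algebra, and a short coefficient check shows that, after a shift of the $t$-grading, $\delta_L$ is precisely the transpose of the Gelfand--Dorfman bracket of $A^*$ on $A^*\otimes\bfk[t,t^{-1}]$; hence the co-Jacobi identity for $\delta_L$ is the transpose of the Jacobi identity already proved in the first step.

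The remaining and hardest step is the cocycle condition $\delta_L([x,y])=(\ad_x\hatot\id+\id\hatot\ad_x)\delta_L(y)-(\ad_y\hatot\id+\id\hatot\ad_y)\delta_L(x)$ for $x,y\in L$. My approach is to expand both sides into series in $L\hatot L$, again using the support observation both to guarantee well-definedness and to index the terms, and to match the coefficient of each pair of $t$-monomials; the bookkeeping is eased by noting that each bracket lowers total $t$-degree by $1$ and each application of $\delta_L$ by $2$, so the two sides visibly lie in the same graded component. The resulting family of identities in $A\otimes A$ follows from the three Novikov-bialgebra compatibility relations \eqref{Lb5}, \eqref{Lb6} and \eqref{Lb7}: the piece coming from $\Delta(a\circ b)$ is governed by \eqref{Lb5}, but since the Lie cobracket $\delta_L$ is antisymmetrized whereas the Novikov coproduct $\Delta$ is not, the $\widehat{\tau}$-symmetrization forces one to invoke \eqref{Lb6} and \eqref{Lb7} to reorganize the $\tau\Delta$-contributions and the $L_\star$ and $R_\circ$ terms. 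Carrying out this matching while tracking which of \eqref{Lb5}--\eqref{Lb7} kills which coefficient is the main obstacle. An alternative that sidesteps the bookkeeping is to pass through the Manin-triple description of Novikov bialgebras from \cite{HBG}: the double construction $A\oplus A^*$ is a Novikov algebra carrying a nondegenerate invariant bilinear form $\mathcal{B}$, and affinizing it---equipping $(A\oplus A^*)\otimes\bfk[t,t^{-1}]$ with the Gelfand--Dorfman bracket and a residue-type pairing---produces a Manin triple of completed Lie algebras, from which the completed Lie bialgebra structure on $L=A\otimes\bfk[t,t^{-1}]$ can be read off; here the only real point to check is that the residue pairing is invariant, which follows from the invariance of $\mathcal{B}$ on $A\oplus A^*$ together with $\mathrm{Res}(k')=0$.
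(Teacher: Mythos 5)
This proposition is imported verbatim from \cite[Theorem 2.23]{HBG}; the present paper supplies no proof of its own, so there is no in-paper argument to compare against, and your proposal must be judged against the proof in the cited source, which is a direct verification of the same three axioms you list. Your outline follows that route. The Lie-algebra step (Gelfand--Dorfman affinization, Jacobi from Eqs.~\eqref{lef} and \eqref{Nov}) is standard and correct. Your duality argument for the co-Jacobi identity is also sound and is a genuinely cleaner observation than brute force: with the pairing $\langle at^m, ft^n\rangle=\langle a,f\rangle\delta_{m+n,-1}$ one checks that $\langle\delta_L(at^m),ft^p\otimes gt^q\rangle=\delta_{m+p+q,0}\bigl(p\langle f\circ^* g,a\rangle-q\langle g\circ^* f,a\rangle\bigr)$, i.e.\ $\delta_L$ is exactly the transpose of the affinization bracket of $(A^*,\Delta^*)$, so co-Jacobi for $\delta_L$ is Jacobi for the dual Novikov algebra. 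The weakness is the cocycle condition, which is where the entire content of the theorem sits: you correctly predict that the coefficient identities must be discharged by \eqref{Lb5}, \eqref{Lb6} and \eqref{Lb7} and correctly note that the antisymmetrization of $\delta_L$ versus the unsymmetrized $\Delta$ is what forces \eqref{Lb6} and \eqref{Lb7} into play, but you do not carry out the matching, and it is precisely this computation that constitutes the proof in \cite{HBG}. As written, the claim that the identities ``follow from'' the three compatibility relations is an assertion of the theorem rather than a derivation of it. Your alternative via Manin triples also understates the difficulties: the dual copy $A^*\otimes\bfk[t,t^{-1}]$ must be completed for the residue pairing to be nondegenerate on the double, and extracting a completed Lie bialgebra from a pair of isotropic subalgebras requires re-proving the coadjoint-compatibility step in the completed setting, not merely checking invariance of the pairing. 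So the strategy is right and matches the source, but the decisive step is sketched rather than proved.
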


\begin{thm}\mlabel{thm-con-Lie-bi}
Let $(A,\cdot, \delta,D,Q)$ be a commutative and cocommutative
differential ASI bialgebra and $L=A\otimes {\bf k}[t,t^{-1}]$.
Define a binary operation on $L$ for $q\in {\bf k}$ as
follows.
 \begin{eqnarray}
 [at^m,bt^n]_q=m(a\cdot (D+qQ) b)t^{m+n-1}-n(b\cdot (D+qQ)a)t^{m+n-1}, \;\;\; a, b\in A, m, n\in \ZZ.
 \end{eqnarray}
 Let $\delta_{L,q}: L\rightarrow L\widehat{\otimes} L$ be the linear map defined by
 \begin{eqnarray}
\delta_{L,q}(at^m)=\sum_{i\in \ZZ}\sum (i+1)(a_{(1)}t^{-i-2}\otimes (Q+qD)a_{(2)}t^{m+i}-(Q+qD)a_{(2)}t^{m+i}\otimes a_{(1)}t^{-i-2}),
\vspb
\end{eqnarray}
for all $a\in A$ and $m\in \ZZ$, where $\delta(a)=\sum
a_{(1)}\otimes a_{(2)}$. Then we have the following
conclusions.
\begin{enumerate}
\item $(L, [\cdot,\cdot]_{-\frac{1}{2}}, \delta_{L,-\frac{1}{2}})$ is a completed Lie bialgebra.
\item If Eqs.~\meqref{eq:conda} and \meqref{eq:condb} hold,
then $(L, [\cdot,\cdot]_q, \delta_{L,q})$ is a completed Lie bialgebra for any $q\in {\bf k}$.
\end{enumerate}
\end{thm}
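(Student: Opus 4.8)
The plan is to reduce Theorem~\ref{thm-con-Lie-bi} to Theorem~\ref{bialgebra-constr} together with Proposition~\ref{Liebi-constr}. The key observation is that the operations $[\cdot,\cdot]_q$ and $\delta_{L,q}$ on $L=A\otimes{\bf k}[t,t^{-1}]$ are precisely the ones obtained from Proposition~\ref{Liebi-constr} by \emph{feeding in} the Novikov bialgebra $(A,\circ_q,\Delta_q)$ in place of the generic $(A,\circ,\Delta)$. Indeed, unwinding Eq.~\meqref{eq:k}, $a\circ_q b = a\cdot(D+qQ)(b)$, so the bracket $[at^m,bt^n]_q = m(a\circ_q b)t^{m+n-1}-n(b\circ_q a)t^{m+n-1}$ is exactly the Lie bracket of Proposition~\ref{Liebi-constr} built on $(A,\circ_q)$. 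Likewise, writing $\Delta_q(a)=\sum a_{(1)}\otimes(Q+qD)a_{(2)}$ from Eq.~\meqref{eq:cons2}, if we denote $\Delta_q(a)=\sum \tilde a_{(1)}\otimes \tilde a_{(2)}$ then $\delta_{L,q}(at^m)=\sum_{i\in\ZZ}\sum(i+1)(\tilde a_{(1)}t^{-i-2}\otimes\tilde a_{(2)}t^{m+i}-\tilde a_{(2)}t^{m+i}\otimes\tilde a_{(1)}t^{-i-2})$, which is exactly $\delta_L$ of Proposition~\ref{Liebi-constr} for the Novikov coalgebra $(A,\Delta_q)$.

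With this identification in hand, the argument is short. For part~(1), Theorem~\ref{bialgebra-constr}\meqref{it:const2} tells us that $(A,\circ_{-\frac12},\Delta_{-\frac12})$ is a Novikov bialgebra \emph{without any hypothesis}; applying Proposition~\ref{Liebi-constr} to it yields that $(L,[\cdot,\cdot]_{-\frac12},\delta_{L,-\frac12})$ is a completed Lie bialgebra. For part~(2), under Eqs.~\meqref{eq:conda} and \meqref{eq:condb}, Theorem~\ref{bialgebra-constr}\meqref{it:const1} gives that $(A,\circ_q,\Delta_q)$ is a Novikov bialgebra for every $q\in{\bf k}$; again Proposition~\ref{Liebi-constr} upgrades this to a completed Lie bialgebra structure on $L$. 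So the proof is essentially a matching of notation followed by two invocations of earlier results.

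The one point that needs genuine (if routine) verification is that the $\ZZ$-grading required by Proposition~\ref{Liebi-constr} is actually present here: there $C\hatot D$ and the notion of completed Lie (bi)algebra are set up for $\ZZ$-graded vector spaces with finite-dimensional components. Here $L=A\otimes{\bf k}[t,t^{-1}]$ carries the standard grading $L_i=A\otimes{\bf k}t^i$, each $L_i\cong A$ finite-dimensional since $A$ is finite-dimensional by the standing assumption; the bracket has degree $-1$ and $\delta_{L,q}$ lands in $L\hatot L$ because for fixed $m$ the sum over $i\in\ZZ$ distributes one tensor factor over all degrees $-i-2$ (resp. $m+i$) with only finitely many $\alpha$'s in each bidegree, matching the shape \meqref{eq:ssum}. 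I would state this compatibility in one or two sentences rather than belabor it.

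I do not anticipate a real obstacle: the entire content of the theorem has been pre-packaged into Theorem~\ref{bialgebra-constr} and Proposition~\ref{Liebi-constr}, and the only thing to be careful about is making the substitution $(\circ,\Delta)\rightsquigarrow(\circ_q,\Delta_q)$ explicit and checking it reproduces the displayed formulas for $[\cdot,\cdot]_q$ and $\delta_{L,q}$ verbatim. Thus the proof I would write is: observe the identification, invoke Theorem~\ref{bialgebra-constr}\meqref{it:const2} and Proposition~\ref{Liebi-constr} for~(1), invoke Theorem~\ref{bialgebra-constr}\meqref{it:const1} and Proposition~\ref{Liebi-constr} for~(2).
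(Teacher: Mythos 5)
Your proposal is correct and coincides with the paper's own proof, which simply cites Theorem~\mref{bialgebra-constr} and Proposition~\mref{Liebi-constr}; your explicit matching of $[\cdot,\cdot]_q$ and $\delta_{L,q}$ with the constructions for $(A,\circ_q,\Delta_q)$, and the remark on the $\ZZ$-grading, just spell out what the paper leaves implicit.
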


\begin{proof}
The conclusions are direct consequences of Theorem \mref{bialgebra-constr} and Proposition \mref{Liebi-constr}.
\end{proof}

\begin{ex}
Let $(A, \cdot, \delta, D, Q)$ be the commutative and cocommutative differential ASI bialgebra given in Example \mref{ex:Nov1}. Then by Theorem \mref{thm-con-Lie-bi}, there is a completed Lie bialgebra $(L=A\otimes {\bf k}[t,t^{-1}], [\cdot,\cdot]_{-\frac{1}{2}}, \delta_{L,-\frac{1}{2}})$ given by
\begin{eqnarray*}
&[e_1t^m,e_1t^n]_{-\frac{1}{2}}=-\frac{1}{2}(m-n)e_1t^{m+n-1},\;\;[e_1t^m,e_2t^n]_{-\frac{1}{2}}=(m+\frac{n}{2})e_2t^{m+n-1},\;\;[e_2t^m,e_2t^n]_{-\frac{1}{2}}=0,\;\;m, n\in \ZZ,&\\
&\delta_{L,-\frac{1}{2}}(e_1t^m)=0,\;\;\delta_{L,-\frac{1}{2}}(e_2t^m)=\sum_{i\in \ZZ}(-i-1-\frac{m}{2})e_2t^{-i-2}\otimes e_2t^{m+i},\;\; m\in \ZZ.&
\end{eqnarray*}
\end{ex}

\subsection{Characterizations in terms of Manin triples }
We recall the definition of a quadratic Novikov algebra.
\begin{defi}\mcite{HBG}\mlabel{Novbilinear}
Let $(A,\circ)$ be a Novikov algebra. A bilinear form
$\mathcal{B}(\cdot,\cdot)$  on $A$ is called {\bf invariant} if it satisfies
\vspb
\begin{eqnarray}\mlabel{bilinear1}
\mathcal{B}(a\circ b,c)=-\mathcal{B}(b, a\star c),\;\;\;
a,b,c\in A.
\end{eqnarray}
A {\bf quadratic Novikov algebra}, denoted by $(A,
\circ,\mathcal{B}(\cdot,\cdot))$, is a Novikov algebra $(A,\circ)$
together with a nondegenerate symmetric invariant bilinear form
$\mathcal{B}(\cdot,\cdot)$.
\end{defi}

Recall that a bilinear form $\mathcal B(\cdot,\cdot)$ on an associative algebra $(A, \cdot)$ is {\bf
invariant} if
\begin{eqnarray*}
\mathcal B(a \cdot b,c)=\mathcal B(a, b\cdot c),\;\;\; a, b, c\in A.
\end{eqnarray*}
A {\bf commutative Frobenius algebra} $(A, \cdot, \mathcal B(\cdot,\cdot))$ is a commutative associative algebra $(A, \cdot)$ with a non-degenerate invariant bilinear form $\mathcal B(\cdot,\cdot)$. A commutative Frobenius algebra $(A, \cdot, \mathcal B(\cdot,\cdot))$ is called {\bf symmetric} if $\mathcal B(\cdot,\cdot)$ is symmetric.

\begin{defi}\mcite{LLB}
Let $(A, \cdot, D)$ be a commutative differential algebra. A {\bf commutative differential Frobenius algebra} on $(A, \cdot, D)$ is $(A, \cdot, D, \mathcal B(\cdot,\cdot))$ such that $(A, \cdot, \mathcal B(\cdot,\cdot))$ is a commutative symmetric Frobenius algebra.
\end{defi}

\begin{lem} \cite[Proposition 3.3]{LLB}\mlabel{admissible}
Let $(A,\cdot, D, \mathcal B(\cdot,\cdot))$ be a commutative differential Frobenius algebra.
Suppose that $\hat D$ is
the adjoint operator of $D$ with respect to $\mathcal
B(\cdot,\cdot)$ in the sense that
\begin{equation}
\mathcal B(D(a),b)=\mathcal B(a, \hat D(b)),\;\;\;\;\; a,b\in A.
\end{equation}
Then $(A, \cdot, D, \hat D )$ is an admissible commutative differential algebra.
\end{lem}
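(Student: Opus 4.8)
The claim is that for a commutative differential Frobenius algebra $(A,\cdot,D,\mathcal B(\cdot,\cdot))$, the adjoint operator $\hat D$ of $D$ makes $(A,\cdot,D,\hat D)$ an admissible commutative differential algebra; that is, $\hat D$ satisfies the admissibility relation $\hat D(a\cdot b)=\hat D(a)\cdot b-a\cdot D(b)$ for all $a,b\in A$. Since $(A,\cdot,D)$ is already a commutative differential algebra, all that must be verified is Eq.~\meqref{eq:RSI1} with $Q=\hat D$. The plan is to prove this identity by testing both sides against an arbitrary $c\in A$ using the nondegenerate invariant symmetric bilinear form $\mathcal B(\cdot,\cdot)$, converting everything into statements about $D$ (which we control via the Leibniz rule) rather than $\hat D$.

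First I would fix $a,b,c\in A$ and compute $\mathcal B(\hat D(a\cdot b),c)$. By definition of the adjoint, this equals $\mathcal B(a\cdot b, D(c))$. Using invariance of $\mathcal B$ on $(A,\cdot)$, rewrite $\mathcal B(a\cdot b,D(c))=\mathcal B(b, a\cdot D(c))$; alternatively, $=\mathcal B(a, b\cdot D(c))$, and commutativity gives flexibility here. Next I would compute the pairing of the right-hand side: $\mathcal B(\hat D(a)\cdot b - a\cdot D(b),\,c)$. For the first term, invariance gives $\mathcal B(\hat D(a)\cdot b,c)=\mathcal B(\hat D(a), b\cdot c)=\mathcal B(a, D(b\cdot c))$, and now the Leibniz rule for $D$ expands $D(b\cdot c)=D(b)\cdot c + b\cdot D(c)$, so this term becomes $\mathcal B(a, D(b)\cdot c)+\mathcal B(a, b\cdot D(c))$. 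For the second term, $-\mathcal B(a\cdot D(b),c)=-\mathcal B(a, D(b)\cdot c)$ by invariance. Adding these, the two occurrences of $\mathcal B(a,D(b)\cdot c)$ cancel, leaving exactly $\mathcal B(a, b\cdot D(c))$, which matches the left-hand side pairing computed above. Since $c$ was arbitrary and $\mathcal B$ is nondegenerate, Eq.~\meqref{eq:RSI1} follows.

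The argument is essentially a one-line bookkeeping computation once the adjointness and invariance identities are set up, so there is no serious obstacle; the only point requiring a little care is keeping track of which argument of the commutative product one slides things into when applying invariance, and confirming that symmetry of $\mathcal B$ is not actually needed for this particular identity (it is needed elsewhere, e.g.\ to match up with the Manin triple picture, but here invariance and nondegeneracy suffice, with commutativity of $\cdot$ used to move $D(c)$ between slots). One should also note at the end that $\hat D$ is well-defined as a linear operator precisely because $\mathcal B$ is nondegenerate, so the quadruple $(A,\cdot,D,\hat D)$ genuinely makes sense; then invoking Definition~\mref{de:addiff} concludes that it is an admissible commutative differential algebra.
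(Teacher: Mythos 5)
Your computation is correct and complete: pairing the admissibility identity $\hat D(a\cdot b)=\hat D(a)\cdot b-a\cdot D(b)$ against an arbitrary $c$ via $\mathcal B$, using invariance, the Leibniz rule for $D$, and nondegeneracy, is exactly the right argument. The paper itself states this lemma as an imported result (Proposition 3.3 of \mcite{LLB}) and gives no proof, so there is nothing to compare against; your proof is the natural one and fills that gap correctly. One small internal inconsistency worth fixing: you remark at the end that symmetry of $\mathcal B$ is not needed, but your computation does use it, since the steps $\mathcal B(\hat D(a\cdot b),c)=\mathcal B(a\cdot b, D(c))$ and $\mathcal B(\hat D(a),b\cdot c)=\mathcal B(a,D(b\cdot c))$ amount to asserting that $D$ is the adjoint of $\hat D$, which follows from the defining relation $\mathcal B(D(x),y)=\mathcal B(x,\hat D(y))$ only after invoking symmetry. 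The identity can indeed be proved without symmetry by instead pairing in the slot where $\hat D$ appears on the right (i.e.\ computing $\mathcal B(c,\hat D(a\cdot b))$ etc.), but as written your argument relies on it; since symmetry is part of the definition of a commutative differential Frobenius algebra here, this costs you nothing, and you should either drop the remark or rearrange the pairing to justify it.
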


Next, we present a construction of quadratic Novikov algebras from commutative differential Frobenius algebras.

\begin{pro}\mlabel{cor:constr}
Let $(A,\cdot, D, \mathcal B(\cdot,\cdot))$ be a commutative differential Frobenius algebra.
Let $\hat D$ be the adjoint operator of $D$ with respect to $\mathcal B(\cdot,\cdot)$. Define $a\circ_q b:=a\cdot (D+q \hat D)b$ for all $a$, $b\in A$. Then $(A,\circ_q,\mathcal B(\cdot,\cdot) )$ is a quadratic Novikov
algebra if and only if $q=-\frac{1}{2}$ or $\hat D$ is a derivation of $(A,\cdot)$.
\end{pro}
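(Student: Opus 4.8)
The plan is to reduce the statement to a Frobenius-algebra version of Proposition~\mref{constr2} together with a careful analysis of when the bilinear form $\mathcal B(\cdot,\cdot)$ is invariant for $\circ_q$. First I would apply Lemma~\mref{admissible} to conclude that $(A,\cdot,D,\hat D)$ is an admissible commutative differential algebra, so that by Proposition~\mref{constr2} the product $a\circ_q b=a\cdot(D+q\hat D)b$ already defines a Novikov algebra for every $q\in\bfk$. Thus the only thing left to check is the invariance condition~\meqref{bilinear1}, i.e.\ that $\mathcal B(a\circ_q b,c)=-\mathcal B(b,a\star_q c)$ for all $a,b,c\in A$, where $a\star_q c=a\circ_q c+c\circ_q a$; the symmetry and nondegeneracy of $\mathcal B$ are given.

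The computational heart is to expand both sides using the Frobenius property $\mathcal B(xy,z)=\mathcal B(x,yz)$, the commutativity of $\cdot$, the Leibniz rule for $D$, and the adjointness $\mathcal B(D(x),y)=\mathcal B(x,\hat D(y))$. On the left, $\mathcal B(a\circ_q b,c)=\mathcal B(a\cdot(D+q\hat D)b,\,c)=\mathcal B((D+q\hat D)b,\,a\cdot c)=\mathcal B(b,\,\hat D(a\cdot c))+q\,\mathcal B(b,\,D(a\cdot c))$ after moving $D+q\hat D$ across via adjointness (note $\widehat{D}$ has adjoint $D$, and $\widehat{\hat D}=D$). On the right, $-\mathcal B(b,a\circ_q c+c\circ_q a)=-\mathcal B(b,\,a\cdot(D+q\hat D)c)-\mathcal B(b,\,c\cdot(D+q\hat D)a)$. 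Using the Leibniz rule $D(a\cdot c)=D(a)\cdot c+a\cdot D(c)$ and the admissibility identity $\hat D(a\cdot c)=\hat D(a)\cdot c-a\cdot D(c)$ (from Eq.~\meqref{eq:RSI1} for the quadruple $(A,\cdot,D,\hat D)$), one rewrites $\hat D(a\cdot c)$ and $D(a\cdot c)$ and collects terms. The difference between the two sides should collapse, after using commutativity, to an expression of the shape $(1+2q)\,\mathcal B(b,\,a\cdot\hat D(c))$ or, more symmetrically, to a multiple of $(1+2q)$ times a single nonvanishing bilinear expression — plus a term proportional to $\mathcal B(b, a\cdot(\hat D(c)+?))$ that vanishes precisely when $\hat D$ is a derivation. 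I expect the upshot to be an identity of the form
\[
\mathcal B(a\circ_q b,c)+\mathcal B(b,a\star_q c)=(1+2q)\,\mathcal B\big(b,\,a\cdot(\hat D(c)- \text{(derivation defect)})\big),
\]
so that invariance holds for all $a,b,c$ iff either $1+2q=0$, i.e.\ $q=-\tfrac12$, or the derivation defect $\hat D(a\cdot c)-\hat D(a)\cdot c-a\cdot\hat D(c)$ vanishes identically, i.e.\ $\hat D$ is a derivation of $(A,\cdot)$; nondegeneracy of $\mathcal B$ is what lets us pass from "the pairing against $b$ vanishes for all $b$" to "the element itself vanishes".

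The main obstacle is the bookkeeping in the middle step: one must be careful that the adjoint of $\hat D$ is $D$ (not $\hat D$), that the admissibility identity for $(A,\cdot,D,\hat D)$ reads $\hat D(ab)=\hat D(a)b-aD(b)$ rather than with $D$ and $\hat D$ swapped, and that moving operators across $\mathcal B$ introduces the right signs; a sign error here would hide the clean factor $(1+2q)$. A secondary subtlety is confirming that the residual term genuinely measures the derivation defect of $\hat D$ and does not, say, also require $D$ to be a derivation — this is where one uses that $D$ \emph{is} already a derivation by hypothesis, so only the $\hat D$-side contributes an obstruction. Once the factorization $(1+2q)\cdot(\text{derivation defect})$ is in hand, the "if and only if" follows immediately from nondegeneracy, and for $q=-\tfrac12$ the defect term is killed by the scalar factor regardless of whether $\hat D$ is a derivation, matching part~\meqref{it:const2} of Theorem~\mref{bialgebra-constr} and explaining the recurring role of $q=-\tfrac12$.
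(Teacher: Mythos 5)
Your proposal is correct and follows essentially the same route as the paper: invoke Lemma~\mref{admissible} and Proposition~\mref{constr2} to get the Novikov structure for free, then compute $\mathcal B(a\circ_q b,c)+\mathcal B(b,a\star_q c)$ using the Frobenius property, adjointness, the Leibniz rule and the admissibility identity, arriving at $(1+2q)$ times a pairing against $(D+\hat D)(\cdot)\cdot(\cdot)$, which by nondegeneracy and Proposition~\mref{pro:cond} vanishes identically iff $q=-\tfrac12$ or $\hat D$ is a derivation. The computation does collapse exactly as you anticipate (the key last step is $a\cdot(D+\hat D)(c)=(D+\hat D)(a)\cdot c$, which merges the residual terms into the single $(1+2q)$-factored expression), so there is no gap.
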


\begin{proof}
By Lemma \mref{admissible}, $(A, \cdot, D, \hat D)$ is an
admissible commutative differential algebra. Hence by
Proposition~\ref{constr2}, $(A,\circ_q)$ is a Novikov algebra for
any $q\in {\bf k}$. Furthermore, for all $a,b,c\in A$, we have
{\wuhao\begin{eqnarray*}
&&\mathfrak{B}(a\circ_{q} b,c)+\mathfrak{B}(b, a\circ_q c+c\circ_q a)\\
&=&\mathfrak{B}(a\cdot (D+q\widehat{D})b,c)+\mathfrak{B}(b, a\cdot (D+q\widehat{D})c+c\cdot (D+q\widehat{D})a)\\
&=&\mathfrak{B}(a, (1+q)(D(b)+\widehat{D}(b))\cdot c+qb\cdot (D+\widehat{D})(c))\\
&=& \mathfrak{B}(a, (1+2q)(D(b)+\widehat{D}(b))\cdot c).
\end{eqnarray*}}
Then the conclusion follows directly by Proposition \ref{pro:cond}. 
\end{proof}

\begin{defi}
\begin{enumerate}
  \item  \mcite{HBG} A {\bf (standard) Manin triple of Novikov algebras} is a triple $(B,(A,\circ_{A}),(A^\ast,$ $\circ_{A^\ast}))$ of Novikov algebras for which
\begin{itemize}
    \item as a vector space, $B$ is the direct sum of $A$ and $A^\ast$;
    \item $(A,\circ_{A})$ and $(A^\ast,\circ_{A^\ast})$ are Novikov subalgebras of $B$;
    \item the bilinear form on $B=A\oplus A^\ast$ defined by
\begin{eqnarray}\mlabel{bilinear}
\mathcal{B}(a+f,b+g)=\langle f, b\rangle+\langle g, a\rangle,
\;\;\;a,~~b\in A,~~f,~~g\in A^\ast,
\end{eqnarray}
is invariant.
\end{itemize}
  \item \mcite{Bai} A {\bf double construction of a Frobenius algebra} is a triple $((B, \cdot), (A, \cdot_A), (A^\ast, \cdot_{A^\ast}))$ of commutative associative algebras satisfying the following conditions:
\begin{itemize}
\item as a vector space, $B$ is the direct sum of $A$ and $A^\ast$;
\item $(A, \cdot_A)$ and $(A^\ast, \cdot_{A^\ast})$ are  subalgebras of $(B, \cdot)$;
\item the bilinear form on $B=A\oplus A^\ast$ given by Eq. (\mref{bilinear}) is invariant.
\end{itemize}
\end{enumerate}
\end{defi}

\begin{defi} \mcite{LLB}
Let $(A, \cdot_A, D)$ and $(A^\ast, \cdot_{A^\ast}, Q^\ast)$ be  commutative differential algebras.
 A {\bf double construction of a commutative differential Frobenius algebra} associated with  $(A, \cdot_A, D)$ and $(A^\ast, \cdot_{A^\ast}, Q^\ast)$ is a double construction
   of a commutative Frobenius algebra $((B, \cdot), (A, \cdot_A), (A^\ast, \cdot_{A^\ast}))$
such that $(B, \cdot, D+Q^\ast)$ is a commutative differential
algebra.
  We denote it by $((B, \cdot, D+Q^\ast), (A, \cdot_A, D), (A^\ast, \cdot_{A^\ast}, Q^\ast))$.
\end{defi}

\begin{rmk}\mlabel{equi-doub-const}
Let $((B, \cdot, D+Q^\ast), (A, \cdot_A, D), (A^\ast,
\cdot_{A^\ast}, Q^\ast))$ be a double construction of a
commutative differential Frobenius algebra. By
\cite[Lemma 3.5]{LLB}, we have $\widehat{D+Q^\ast}=Q+D^\ast$, and
$(A, \cdot_A, D, Q)$ and $(A^\ast, \cdot_{A^\ast}, Q^\ast,
D^\ast)$ are admissible commutative differential algebras. As a
consequence, the triple $((B, \cdot, D+Q^\ast), (A, \cdot_A, D),
(A^\ast, \cdot_{A^\ast}, Q^\ast))$ is also a double construction
in the context of admissible commutative differential algebras.
\end{rmk}

We give a construction of Manin triples of Novikov algebras from double constructions of commutative differential Frobenius algebras as follows.

\begin{pro}\mlabel{double construction}
Let $(A, \cdot_A, D)$ and $(A^\ast, \cdot_{A^\ast}, Q^\ast)$ be
commutative differential algebras. Suppose that $((B, \cdot,
D+Q^\ast), (A, \cdot_A, D), (A^\ast, \cdot_{A^\ast}, Q^\ast))$ is
a double construction of a commutative differential Frobenius
algebra. Then with the Novikov algebra structure on
$B$ induced from $(B, \cdot, D+Q^\ast, Q+D^\ast)$ and with the
abbreviations $\circ:=\circ_A$ and $\circ^*:=\circ_{A^*}$, the
triple $(B, (A, {\circ}_q), (A^\ast, {\circ^\ast}_{\!\!q}))$ is a
Manin triple of Novikov algebras if and only if $q=-\frac{1}{2}$
or $Q$ is a derivation on $(A, \cdot_A)$ and $D^\ast$ is a
derivation on $(A^\ast, \cdot_{A^\ast})$.
\end{pro}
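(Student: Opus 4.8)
The plan is to strip away the parts of the Manin triple axioms that hold automatically, reduce the one remaining axiom (invariance of the pairing) to a quadratic Novikov algebra statement so that Proposition~\ref{cor:constr} applies, and then prove a lemma translating ``$Q+D^*$ is a derivation of $(B,\cdot)$'' into the asserted conditions on $Q$ and $D^*$. First I would invoke Remark~\ref{equi-doub-const}: $(B,\cdot,D+Q^*,Q+D^*)$ is an admissible commutative differential algebra with $\widehat{D+Q^*}=Q+D^*$, and $(A,\cdot_A,D,Q)$, $(A^*,\cdot_{A^*},Q^*,D^*)$ are admissible commutative differential algebras. Setting $S:=D+Q^*$ and $\widehat S:=Q+D^*$, the Novikov product on $B$ furnished by Proposition~\ref{constr2} is $x\circ_q y=x\cdot(S+q\widehat S)(y)$. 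Since $S$ acts on $B=A\oplus A^*$ as $D$ on $A$ and $Q^*$ on $A^*$, and $\widehat S$ as $Q$ on $A$ and $D^*$ on $A^*$, a direct check gives $a\circ_q b=a\cdot_A(D+qQ)(b)$ and $f\circ^*_q g=f\cdot_{A^*}(Q^*+qD^*)(g)$ for $a,b\in A$, $f,g\in A^*$; hence $(A,\circ_q)$ and $(A^*,\circ^*_q)$ are Novikov subalgebras of $(B,\circ_q)$, and $B=A\oplus A^*$ as a vector space by the double construction. Therefore $(B,(A,\circ_q),(A^*,\circ^*_q))$ is a Manin triple of Novikov algebras if and only if $\mathcal B$ (see Eq.~\eqref{bilinear}) is invariant for $\circ_q$, that is, if and only if $(B,\circ_q,\mathcal B)$ is a quadratic Novikov algebra, the form being already symmetric and nondegenerate.

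Since $(B,\cdot)$ is commutative, $\mathcal B$ is a nondegenerate symmetric invariant bilinear form for $\cdot$, and $(B,\cdot,S)$ is a commutative differential algebra, the quadruple $(B,\cdot,S,\mathcal B)$ is a commutative differential Frobenius algebra for which $\widehat S=Q+D^*$ is the adjoint operator of $S$ with respect to $\mathcal B$. Proposition~\ref{cor:constr} then applies verbatim and yields that $(B,\circ_q,\mathcal B)$ is a quadratic Novikov algebra if and only if $q=-\frac12$ or $Q+D^*$ is a derivation of $(B,\cdot)$. So it remains to prove the lemma: \emph{$Q+D^*$ is a derivation of $(B,\cdot)$ if and only if $Q$ is a derivation of $(A,\cdot_A)$ and $D^*$ is a derivation of $(A^*,\cdot_{A^*})$.}

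The forward implication is immediate, since $Q+D^*$ preserves $A$ and $A^*$ and $\cdot$ restricts to $\cdot_A$ and $\cdot_{A^*}$, so restricting the Leibniz rule gives the two statements. For the converse, combining the admissibility identity $\widehat S(x\cdot y)=\widehat S(x)\cdot y-x\cdot S(y)$ with the Leibniz rule for $S$ shows that $\widehat S$ is a derivation of $(B,\cdot)$ exactly when $x\cdot(S+\widehat S)(y)=0$ for all $x,y\in B$, which by the identity $x\cdot(S+\widehat S)(y)=(S+\widehat S)(x)\cdot y$ (valid in any admissible commutative differential algebra) is a two-sided annihilation of $\mathrm{Im}(S+\widehat S)$. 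Now $S+\widehat S$ preserves $A$ and $A^*$, acting by $D+Q$ on $A$ and by $Q^*+D^*$ on $A^*$, and by item~(a) of Proposition~\ref{pro:cond}, applied to $(A,\cdot_A,D,Q)$ and to $(A^*,\cdot_{A^*},Q^*,D^*)$ respectively, the hypotheses amount to $a\cdot_A(D+Q)(b)=0$ and $f\cdot_{A^*}(Q^*+D^*)(g)=0$ for all $a,b\in A$, $f,g\in A^*$. Expanding $x\cdot(S+\widehat S)(y)$ bilinearly in the $A$- and $A^*$-components of $x$ and $y$, the two pure cases are precisely these identities; for a mixed case, say $x=a\in A$, $y=g\in A^*$, one tests against arbitrary $z\in B$ and uses invariance and commutativity of $\mathcal B$ for $\cdot$, the self-adjointness of $S+\widehat S$ (as $\widehat S$ is the adjoint of $S$), and the identity $(S+\widehat S)(w\cdot v)=(S+\widehat S)(w)\cdot v$ to get $\mathcal B(a\cdot(S+\widehat S)(g),z)=\mathcal B(g,(S+\widehat S)(z)\cdot a)$; evaluating this separately for $z\in A$ and for $z\in A^*$, together with commutativity, reduces the right side to one of the two vanishing identities, so $a\cdot(S+\widehat S)(g)=0$ by nondegeneracy of $\mathcal B$. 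The case $x\in A^*$, $y\in A$ is symmetric, proving the lemma and hence the proposition.

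The main obstacle is the converse of this lemma, and within it the mixed products $A\cdot A^*$: unlike the pure cases, these cannot be handled by restriction alone, and must be forced to vanish on $\mathrm{Im}(S+\widehat S)$ using the admissibility relations together with the self-adjointness of the adjoint of $D+Q^*$ and the nondegeneracy of $\mathcal B$. Everything else — the explicit forms of $\circ_q$ on $A$, $A^*$ and $B$, and the verification of the subalgebra and vector-space-sum axioms — is routine bookkeeping.
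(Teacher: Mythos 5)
Your proof is correct and follows the same route as the paper: the paper's entire proof of this proposition is the single line ``It follows directly from Proposition~\ref{cor:constr}.'' What you add, and what the paper leaves implicit, is the lemma that $Q+D^{*}$ is a derivation of $(B,\cdot)$ if and only if $Q$ is a derivation of $(A,\cdot_{A})$ and $D^{*}$ is a derivation of $(A^{*},\cdot_{A^{*}})$; the only non-routine part of this is exactly the mixed products $A\cdot A^{*}$ that you isolate, and your treatment of them via the identity $(S+\widehat S)(x\cdot y)=(S+\widehat S)(x)\cdot y$, the self-adjointness of $S+\widehat S$, and nondegeneracy of $\mathcal B$ is sound. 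One cosmetic slip: in the case $z=h\in A^{*}$ the reduction of $\mathcal B(g,(S+\widehat S)(h)\cdot a)$ to $\mathcal B(g\cdot_{A^{*}}(Q^{*}+D^{*})(h),a)=0$ uses invariance of $\mathcal B$ once more, not just commutativity, but you have already placed invariance in your toolkit, so the argument goes through as written.
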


\begin{proof}
It follows directly from Propositions \mref{cor:constr}.
\end{proof}

We recall the characterizations of Novikov bialgebras and commutative and cocommutative differential ASI bialgebras by Manin triples and double constructions respectively.

\begin{pro}\cite[Corollary 3.13] {HBG}\mlabel{cc1}
Let $(A,\circ_A)$ be a Novikov algebra. Suppose that there is also
a Novikov algebra structure $\circ_{A^\ast}$ on its dual space
$A^\ast$ induced from $\Delta: A\rightarrow A\otimes A$. Then
there is a Manin triple of Novikov algebras associated with
$(A,\circ_A)$ and $(A^\ast, \circ_{A^\ast})$ if and only if  $(A,
\circ_A, \Delta)$ is a Novikov bialgebra.
\end{pro}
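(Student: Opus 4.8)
The plan is to run the Drinfeld-double argument in the Novikov setting: reconstruct the Novikov multiplication on $B=A\oplus A^\ast$ out of $\circ_A$, $\circ_{A^\ast}$ and the canonical pairing $\mathcal{B}$, and then observe that requiring $(B,\circ_B)$ to obey the Novikov axioms \meqref{lef}--\meqref{Nov} is the same as requiring $(A,\circ_A,\Delta)$ to obey \meqref{Lb5}--\meqref{Lb7}. The first task is to show that a Manin triple structure, if one exists, is unique and is entirely encoded by $\circ_A$ and $\Delta$ (equivalently $\circ_{A^\ast}=\Delta^\ast$). Suppose $(B,(A,\circ_A),(A^\ast,\circ_{A^\ast}))$ is a Manin triple with Novikov product $\circ_B$ restricting to $\circ_A$ on $A$ and $\circ_{A^\ast}$ on $A^\ast$. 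For $a\in A$ and $f\in A^\ast$, decompose $a\circ_B f$ and $f\circ_B a$ into their $A$- and $A^\ast$-parts. Substituting the various mixed choices of arguments into the invariance identity $\mathcal{B}(x\circ_B y,z)=-\mathcal{B}(y,x\star_B z)$ and using that $\mathcal{B}$ vanishes on $A\times A$ and on $A^\ast\times A^\ast$, each of these four component maps is forced to be an explicit transpose built out of $L_{\star_A},R_{\circ_A}$ on $A$ and $L_{\star_{A^\ast}},R_{\circ_{A^\ast}}$ on $A^\ast$; for example, pairing with $b\in A$ in $\mathcal{B}(a\circ_B f,b)=-\mathcal{B}(f,a\star_A b)$ shows at once that the $A^\ast$-part of $a\circ_B f$ is $-L_{\star_A}(a)^\ast f$. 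Thus the only freedom in a Manin triple is the pair of cross actions of $A$ and $A^\ast$ on one another, and these are completely prescribed.

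Next I would substitute into the two Novikov identities \meqref{lef} and \meqref{Nov} for $\circ_B$ every mixed combination of arguments --- one entry in $A^\ast$ and two in $A$, together with the mirror combinations having two entries in $A^\ast$. The purely-$A$ instances hold because $(A,\circ_A)$ is a Novikov algebra, the purely-$A^\ast$ instances because $(A^\ast,\circ_{A^\ast})$ is a Novikov algebra --- equivalently $(A,\Delta)$ satisfies the Novikov coalgebra axioms \meqref{Lc3}--\meqref{Lc4}, which is exactly the standing hypothesis --- and $A$, $A^\ast$ are subalgebras by construction. Each surviving mixed instance is an equality in $A$ or in $A^\ast$; transposing it back to operators on $A$, or more cleanly reading it as a tensor identity in $A\otimes A$ via $\Delta$, one checks that the mixed instances coming from \meqref{lef} amount to \meqref{Lb5} together with \meqref{Lb6}, and those coming from \meqref{Nov} amount to \meqref{Lb7}, once the symmetrizations already built into those three equations are accounted for. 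This yields the forward direction. For the converse, given a Novikov bialgebra $(A,\circ_A,\Delta)$ one defines $\circ_B$ on $B$ by the formulas isolated above; then $\mathcal{B}$ is invariant and $A,A^\ast$ are subalgebras by construction, and the same translation shows $(B,\circ_B)$ satisfies \meqref{lef}--\meqref{Nov}, so $(B,(A,\circ_A),(A^\ast,\circ_{A^\ast}))$ is a Manin triple of Novikov algebras.

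The substance of the proof, and the step I expect to be the main obstacle, is the bookkeeping in the middle paragraph: enumerating the mixed instances of \meqref{lef} and \meqref{Nov} without omission, dualizing each term with the correct signs and transposes, and verifying that the resulting system of identities is exactly $\{\meqref{Lb5},\meqref{Lb6},\meqref{Lb7}\}$, neither weaker nor stronger. A related delicate point is that the cross action of $A$ on $A^\ast$ encoded in $\circ_B$ is not the naive dual of the left-regular representation of $(A,\circ_A)$ but the representation assembled from $L_{\star_A}$ and $R_{\circ_A}$ together; getting this identification right is precisely why the operators $L_\star$ and $R_\circ$, rather than $L_\circ$, are the ones appearing in \meqref{Lb5}--\meqref{Lb7}, and it is what makes the correspondence close up.
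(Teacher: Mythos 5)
This proposition is quoted from \cite[Corollary 3.13]{HBG} and the present paper gives no proof of it, so the only comparison available is with the argument in that reference; your outline is essentially that argument: the invariance of $\mathcal{B}$ forces the four cross-component maps (so the double carries the dual-representation actions $L_{\star_A}^\ast$, $-R_{\circ_A}^\ast$ and their $A^\ast$-counterparts), and the Novikov axioms \meqref{lef}--\meqref{Nov} on $B=A\oplus A^\ast$, evaluated on mixed arguments, are then equivalent to \meqref{Lb5}--\meqref{Lb7}. Two small cautions on the part you defer: determining $\pi_{A^\ast}(f\circ_B a)$ requires the invariance instance $\mathcal{B}(a\circ_B b,f)=-\mathcal{B}(b,a\star_B f)$ (the unknown sitting inside the $\star$), not only the easy instance you display; and in the converse direction one must also verify that the so-defined products satisfy \emph{all} remaining instances of invariance, a consistency check that is routine but not literally ``by construction.''
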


\begin{pro}\mlabel{diff-bialg-equ}\cite[Theorem 3.14]{LLB} Let
$(A, \cdot_A, D)$ be a commutative differential algebra. Suppose
that there is a commutative differential algebra structure
$(A^\ast,\cdot_{A^\ast}, Q^\ast)$ on $A^\ast$. Let $\delta:
A\rightarrow A\otimes A$ be the linear dual of the binary
operation $\cdot_{A^\ast}: A^\ast\otimes A^\ast\rightarrow
A^\ast$. Then $(A,\cdot_A, \delta,D,Q)$ is a commutative and
cocommutative differential ASI bialgebra if and only if there is a
double construction of a commutative differential Frobenius
algebra $((B, \cdot, D+Q^\ast), (A, $ $\cdot_A, D),$
$ (A^\ast,$ $ \cdot_{A^\ast}, Q^\ast))$.
\end{pro}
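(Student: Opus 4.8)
The plan is to factor the equivalence through the classical correspondence between commutative ASI bialgebras and double constructions of commutative symmetric Frobenius algebras, and then to match up the extra differential data on the two sides. Since $\delta$ is the linear dual of $\cdot_{A^\ast}$, the theory of ASI bialgebras \mcite{Bai} (in the commutative and cocommutative case) tells us that $(A,\cdot_A,\delta)$ is a commutative and cocommutative ASI bialgebra if and only if there is a double construction of a commutative Frobenius algebra $((B,\cdot),(A,\cdot_A),(A^\ast,\cdot_{A^\ast}))$; here $B=A\oplus A^\ast$, the bilinear form is the symmetric form \meqref{bilinear} (so the Frobenius algebra is automatically symmetric), and the product on $B$ is forced by invariance of \meqref{bilinear} to be
\[
a\cdot b=a\cdot_A b,\qquad f\cdot g=f\cdot_{A^\ast}g,\qquad a\cdot f=f\cdot a=(f\ot\id)\delta(a)+L^\ast_{\cdot_A}(a)f,
\]
for $a,b\in A$ and $f,g\in A^\ast$ (with the sign conventions of \mcite{Bai}; recall from the remark after Definition~\mref{de:addiff} that $A^\ast$ carries the $A$-module structure $-L^\ast_{\cdot}$). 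Granting this, and noting that Definition~\mref{de:admdifbialg} asks in addition precisely for Eqs.~\meqref{eq:RSI1} and \meqref{eq:RSI2} (the conditions ``$(A,\cdot_A,D)$ is a commutative differential algebra'' and ``$(A,\delta,Q)$ is a cocommutative differential coalgebra'' being part of the hypothesis, the latter via $(A^\ast,\cdot_{A^\ast},Q^\ast)$), it remains to show that, given this double construction, Eqs.~\meqref{eq:RSI1} and \meqref{eq:RSI2} hold if and only if the operator $D+Q^\ast$ on $B$ (acting as $D$ on $A$ and as $Q^\ast$ on $A^\ast$) is a derivation of $(B,\cdot)$.

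I would prove this last equivalence by checking the Leibniz rule for $D+Q^\ast$ summand by summand on $B=A\oplus A^\ast$. On $A\times A$ it reads $D(a\cdot_A b)=D(a)\cdot_A b+a\cdot_A D(b)$, which is the hypothesis that $(A,\cdot_A,D)$ is a commutative differential algebra; on $A^\ast\times A^\ast$ it reads $Q^\ast(f\cdot_{A^\ast}g)=Q^\ast(f)\cdot_{A^\ast}g+f\cdot_{A^\ast}Q^\ast(g)$, which holds because $(A^\ast,\cdot_{A^\ast},Q^\ast)$ is a commutative differential algebra. The only substantive case is the mixed one $(D+Q^\ast)(a\cdot f)=D(a)\cdot f+a\cdot Q^\ast(f)$ (the case $f\cdot a$ being identical by commutativity of $B$). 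Splitting $a\cdot f$ into its $A$-part $(f\ot\id)\delta(a)$ and its $A^\ast$-part $L^\ast_{\cdot_A}(a)f$ and using $(D+Q^\ast)|_A=D$, $(D+Q^\ast)|_{A^\ast}=Q^\ast$, this identity decouples into two: its $A$-valued part is $(f\ot\id)(\id\ot D)\delta(a)=(f\ot\id)\delta(D(a))+(f\ot\id)(Q\ot\id)\delta(a)$ for all $f\in A^\ast$, i.e. $(\id\ot D-Q\ot\id)\delta(a)=\delta(D(a))$, which by cocommutativity of $\delta$ is Eq.~\meqref{eq:RSI2}; and its $A^\ast$-valued part is $Q^\ast(L^\ast_{\cdot_A}(a)f)=L^\ast_{\cdot_A}(D(a))f+L^\ast_{\cdot_A}(a)Q^\ast(f)$, which upon pairing with $b\in A$ and using commutativity of $\cdot_A$ becomes $Q(a\cdot_A b)=Q(a)\cdot_A b-a\cdot_A D(b)$, i.e. Eq.~\meqref{eq:RSI1}. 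Reading these two equivalences in both directions, and combining with the reduction above, proves the proposition.

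The ``only if'' half also admits a more conceptual derivation: once $(B,\cdot,D+Q^\ast,\mathcal B)$ is a commutative differential Frobenius algebra, a short computation with the explicit products shows $\widehat{D+Q^\ast}=Q+D^\ast$, so Lemma~\mref{admissible} makes $(B,\cdot,D+Q^\ast,Q+D^\ast)$ an admissible commutative differential algebra, and restricting its admissibility identity to $A$ (respectively, dualizing it on $A^\ast$) recovers Eq.~\meqref{eq:RSI1} (respectively, Eq.~\meqref{eq:RSI2}); this is essentially Remark~\mref{equi-doub-const}. The main obstacle in the plan is not conceptual but bookkeeping: one has to pin down the exact signs and form of the coadjoint actions in the product on $B$, track how the transposes $D,Q,D^\ast,Q^\ast$ interact with them, and then confirm that the mixed Leibniz identity genuinely decouples, with its $A$-valued part capturing exactly the coalgebra admissibility \meqref{eq:RSI2} and its $A^\ast$-valued part exactly the algebra admissibility \meqref{eq:RSI1}, leaving no residual cross terms. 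Once that decoupling is established, the rest is routine.
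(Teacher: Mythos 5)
The paper does not prove this proposition: it is imported verbatim from \mcite{LLB} (Theorem 3.14 there), so there is no in-paper argument to compare yours against. Judged on its own, your proof is correct and follows the natural route: Bai's correspondence between commutative and cocommutative ASI bialgebras and double constructions of commutative Frobenius algebras disposes of condition (a) of Definition~\mref{de:admdifbialg}, and since the hypotheses already give that $(A,\cdot_A,D)$ and $(A^\ast,\cdot_{A^\ast},Q^\ast)$ are commutative differential algebras, the remaining content is exactly the equivalence between the Leibniz rule for $D+Q^\ast$ on $B=A\oplus A^\ast$ and the two admissibility identities \meqref{eq:RSI1} and \meqref{eq:RSI2}. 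Your decoupling of the mixed Leibniz identity checks out: the $A$-valued component gives $(\id\ot D-Q\ot\id)\delta(a)=\delta(D(a))$, which is \meqref{eq:RSI2} after applying $\tau$ and using cocommutativity, and the $A^\ast$-valued component, paired against $b\in A$, gives $Q(a\cdot_A b)=a\cdot_A Q(b)-D(a)\cdot_A b$, which is \meqref{eq:RSI1} by commutativity of $\cdot_A$. The only blemish is the sign of the coadjoint action in your formula for $a\cdot f$: with the paper's convention $\langle\varphi^\ast(a)f,v\rangle=-\langle f,\varphi(a)v\rangle$ the $A^\ast$-component is $-L_{\cdot_A}^\ast(a)f$, not $+L_{\cdot_A}^\ast(a)f$; but the Leibniz identity for that component is homogeneous in the action, so the overall sign cancels and the equivalence is unaffected — a bookkeeping point you already flag. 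Your closing remark that the ``only if'' direction can also be extracted from $\widehat{D+Q^\ast}=Q+D^\ast$ and Lemma~\mref{admissible} is consistent with Remark~\mref{equi-doub-const}.
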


Now we give such a
characterization for the induction of Novikov bialgebras from
commutative and cocommutative differential ASI bialgebras.

\begin{pro}\mlabel{corr-bialgebra}
With the conditions in Theorem \mref{bialgebra-constr}, we
obtain the following conclusions, using the same abbreviations as in Proposition~\mref{double construction}.
\begin{enumerate}
\item The Novikov bialgebra $(A, \circ_{-\frac{1}{2}},
\Delta_{-\frac{1}{2}})$ in Theorem \mref{bialgebra-constr}
\meqref{it:const2} is precisely
 the one obtained from the Manin triple
of Novikov algebras $(B, (A, {\circ}_{-\frac{1}{2}}), (A^\ast,
{\circ^\ast}_{\!\!-\frac{1}{2}}))$ in Proposition \mref{double
construction} when $q=-\frac{1}{2}$. \mlabel{it:corr2}
\item If $Q$ is a derivation on $(A, \cdot_A)$ and $D^\ast$ is a derivation on $(A^\ast, \cdot_{A^\ast})$, then the
Novikov bialgebra $(A, \circ_q, \Delta_q)$ in Theorem
\mref{bialgebra-constr} \meqref{it:const1} is precisely the one obtained from the Manin triple
of Novikov algebras $(B, ( A, {\circ}_q), (A^\ast,
{\circ^\ast}_{\!\!q}))$ in Proposition \mref{double construction}. \mlabel{it:corr1}
    \end{enumerate}
\end{pro}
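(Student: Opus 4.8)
The plan is to verify Proposition~\ref{corr-bialgebra} by tracing through the three constructions that feed into it and checking that they assemble consistently, rather than by a direct computation with structure constants. The key observation is that everything in sight is governed by the single product $a \cdot (D+qQ)(b)$ on $A$ (and dually $\Delta_q = (\id\otimes(Q+qD))\delta$), so the two routes to a Novikov bialgebra --- ``via Proposition~\ref{prostr1}/Theorem~\ref{bialgebra-constr}'' and ``via the Manin triple of Proposition~\ref{double construction}'' --- must coincide once we confirm that the Novikov algebra on $A^\ast$ induced by $\Delta_q$ is the same as the one obtained by dualizing $\circ^\ast_q := \cdot_{A^\ast}\circ((Q^\ast + q D^\ast)\otimes\id)$.

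First I would set up the dictionary. By Proposition~\ref{diff-bialg-equ}, the hypothesis that $(A,\cdot_A,\delta,D,Q)$ is a commutative and cocommutative differential ASI bialgebra is equivalent to having a double construction $((B,\cdot,D+Q^\ast),(A,\cdot_A,D),(A^\ast,\cdot_{A^\ast},Q^\ast))$ of a commutative differential Frobenius algebra; and by Remark~\ref{equi-doub-const} we have $\widehat{D+Q^\ast} = Q + D^\ast$, with both $(A,\cdot_A,D,Q)$ and $(A^\ast,\cdot_{A^\ast},Q^\ast,D^\ast)$ admissible. So Proposition~\ref{double construction} applies and produces a Manin triple $(B,(A,\circ_q),(A^\ast,\circ^\ast_q))$ whenever $q = -\frac12$ (part~\eqref{it:corr2}) or $Q$ is a derivation on $(A,\cdot_A)$ and $D^\ast$ is a derivation on $(A^\ast,\cdot_{A^\ast})$ (part~\eqref{it:corr1}). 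Next I would invoke Proposition~\ref{cc1}: that Manin triple corresponds to a Novikov bialgebra on $A$ whose coproduct $\widetilde\Delta$ is, by definition, the linear dual of the product $\circ^\ast_q$ on $A^\ast$. So the whole proposition reduces to the single identity $\widetilde\Delta = \Delta_q$, i.e.\ the claim that dualizing $f\circ^\ast_q g = f\cdot_{A^\ast}(Q^\ast + qD^\ast)(g)$ gives exactly $\Delta_q(a) = (\id\otimes(Q+qD))\delta(a)$.

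That last identity is the computational heart, and it is essentially the content of the proof of Proposition~\ref{pro:co-cons} read in reverse: there one shows that $\langle f\otimes g,\Delta_q(a)\rangle = \langle \delta^\ast(f\otimes(Q^\ast+qD^\ast)g),a\rangle$, and $\delta^\ast$ is precisely the product $\cdot_{A^\ast}$ dual to $\delta$. So $f\circ^\ast_q g = \delta^\ast(f\otimes(Q^\ast+qD^\ast)g)$ is exactly the product whose dual coproduct is $\Delta_q$ --- which matches the $\circ^\ast_q$ appearing in Proposition~\ref{double construction} (there written as the Novikov product on $A^\ast$ induced from the admissible commutative differential algebra $(A^\ast,\cdot_{A^\ast},Q^\ast,D^\ast)$ via Eq.~\eqref{eq:k}, i.e.\ $f\circ^\ast_q g = f\cdot_{A^\ast}(Q^\ast+qD^\ast)(g)$, since $D^\ast$ plays the role of ``$D$'' and $Q^\ast$ the role of ``$Q$''... wait, one must be careful: the roles are $D \leftrightarrow Q^\ast$ in the pairing). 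I would settle this book-keeping by writing out, for $f,g\in A^\ast$ and $a\in A$, the chain $\langle f\otimes g, \Delta_q(a)\rangle = \langle f\otimes(Q+qD)^\ast$-adjusted..., confirming which of $D,Q$ dualizes to which of $Q^\ast,D^\ast$ using Eq.~\eqref{eq:cons2} and the transpose conventions, and then observing the $\circ_q$ on $A$ induced by Eq.~\eqref{eq:k} is likewise the dual picture.

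Finally, for the compatibility of the Novikov algebra $\circ_q$ on $A\subseteq B$ with the one named in Theorem~\ref{bialgebra-constr}: the Novikov structure on $B$ is induced from the admissible commutative differential algebra $(B,\cdot,D+Q^\ast,Q+D^\ast)$ via Eq.~\eqref{eq:k}, so for $a,b\in A$ it restricts to $a\cdot(D+Q^\ast)(b) + q\, a\cdot(Q+D^\ast)(b)$; since $A$ is a subalgebra and $D$ (resp.\ $Q$) is the restriction to $A$ of $D+Q^\ast$ (resp.\ part of $Q+D^\ast$) --- here again I would cite Remark~\ref{equi-doub-const} and \cite[Lemma 3.5]{LLB} for exactly how $D+Q^\ast$ and $Q+D^\ast$ restrict/project --- this collapses to $a\cdot_A D(b) + q\,a\cdot_A Q(b) = a\circ_q b$, matching Eq.~\eqref{eq:k}. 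The main obstacle I anticipate is purely notational: keeping straight the four operators $D, Q, D^\ast, Q^\ast$, which lives on which side of the double construction, and which adjoint/transpose identity ($\widehat{D+Q^\ast} = Q+D^\ast$, and the various $\varphi^\ast$) is being used at each step; once that dictionary is pinned down, both parts \eqref{it:corr2} and \eqref{it:corr1} follow by assembling Propositions~\ref{cor:constr}, \ref{double construction}, \ref{cc1}, \ref{diff-bialg-equ} and the identity $\widetilde\Delta = \Delta_q$, with the case split on $q=-\frac12$ versus the derivation hypotheses handled exactly as in Proposition~\ref{double construction}.
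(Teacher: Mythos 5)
Your proposal is correct and follows essentially the same route as the paper: Proposition~\ref{diff-bialg-equ} to pass to the double construction, Proposition~\ref{double construction} to get the Manin triple with $f\,{\circ^\ast}_{\!\!q}\,g=f\cdot_{A^\ast}(Q^\ast+qD^\ast)(g)$, Proposition~\ref{cc1} to extract a Novikov bialgebra, and then the duality computation $\langle \Delta_A(a),f\otimes g\rangle=\langle\delta(a),f\otimes(Q^\ast+qD^\ast)(g)\rangle=\langle(\id\otimes(Q+qD))\delta(a),f\otimes g\rangle$ to conclude $\Delta_A=\Delta_q$, which is exactly the paper's argument. The only quibble is the stray notation $\cdot_{A^\ast}\circ((Q^\ast+qD^\ast)\otimes\id)$ early on (the operator should act on the second tensor factor, as you later write correctly).
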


Proposition~\mref{corr-bialgebra} can be depicted as the following commutative diagram.
\vspb
$$    \xymatrix{
        \txt{\tiny $(A, \cdot_A, \delta, D, Q)$ \\ \tiny a commutative and\\ \tiny cocommutative \\ \tiny differential ASI \\ \tiny bialgebra} \ar@{<->}[rr]^-{\rm Prop. \mref{diff-bialg-equ}} \ar[d]^(.6){\rm Thm. \mref{bialgebra-constr}}&&
         \txt{\tiny$((B, \cdot, D+Q^\ast), (A, \cdot_A, D), (A^\ast, \cdot_{A^\ast}, Q^\ast))$\\ \tiny a double construction of \\ \tiny a commutative differential\\ \tiny Frobenius algebra} \ar[d]_-{\rm Prop. \mref{double construction}}\\
        \txt{\tiny $(A, {\circ}_q, \Delta_q)$\\ \tiny a Novikov bialgebra} \ar@{<->}[rr]^-{\rm Prop. \mref{cc1}} && \txt{\tiny $(B, (A, {\circ}_q), (A^\ast, {\circ^\ast}_{\!\!q}))$ \\ \tiny a Manin triple \\ \tiny of Novikov algebras}
        }
$$

\begin{rmk}
The proof of Proposition~\mref{double construction} (indeed the proof of Proposition~\mref{cor:constr}) clearly shows why the conditions
that either Eqs. (\mref{eq:conda}) and (\mref{eq:condb}) hold or
$q=-\frac{1}{2}$ are required. Then through the equivalences in
the two horizontal implications in the diagram, one gets another
perspective on why these conditions are needed for
Theorem~\mref{bialgebra-constr}. 
\mlabel{rk:square4}
\vspb
\end{rmk}

\begin{proof}[Proof of Proposition~\mref{corr-bialgebra}]
We only prove \meqref{it:corr1}. Item~\meqref{it:corr2} can be proved in the same way.

By Proposition \mref{diff-bialg-equ}, $(A, \cdot_A, \delta, D, Q)$
where $\cdot_A=\cdot$ is a commutative and cocommutative
differential ASI bialgebra if and only if there is a double
construction of a commutative differential Frobenius
algebra $((B, \cdot, D+Q^\ast), (A, \cdot_A, D),
(A^\ast, \cdot_{A^\ast}, Q^\ast))$. Therefore, by
Proposition \mref{double construction}, $(B, (A, {\circ}_{q}),
(A^\ast, {\circ^\ast}_{\!\!q}))$ is a Manin triple of Novikov
algebras. Then the Novikov algebra operation
${\circ^\ast}_{\!\!q}$ on $A^\ast$ is given by
\begin{eqnarray*}
f{\circ^\ast}_{\!\!q} g=f \cdot_{A^\ast} (Q^\ast+qD^\ast)g,\;\;\; f, g\in A^\ast.
\end{eqnarray*}
Therefore, by Proposition \mref{cc1}, there is a Novikov bialgebra
$(A, {\circ}_{q}, \Delta_A)$, where $\Delta_A$ is obtained from
the Novikov algebra $(A^\ast, {\circ^\ast}_{\!\!q})$. Hence for all $a\in A$, $f$, $g\in A^\ast$, we have
\vspb
\begin{eqnarray*}
\langle \Delta_A(a), f\otimes g\rangle &=&\langle a, f{\circ^\ast}_{\!\!q} g\rangle=\langle a, f\cdot_{A^\ast} (Q^\ast+qD^\ast)(g)\rangle\\
&=&\langle \delta(a), f\otimes (Q^\ast+qD^\ast)(g)\rangle=\langle (\id\otimes (Q+qD))\delta(a), f\otimes g\rangle.
\end{eqnarray*}
Therefore, we obtain $\Delta_A=(\id\otimes (Q+qD))\delta=\Delta_q$.
 Then the proof is completed.
 \vspb
\end{proof}

\begin{rmk} \label{rmk:example}
Let $(A,\cdot, \delta,D,Q)$ be a commutative and
cocommutative differential ASI bialgebra. Define a binary
operation $\circ_q$ and a coproduct $\Delta_q$ on $A$ by
Eqs.~\meqref{eq:k} and \meqref{eq:cons2} respectively. Then
$(A,\circ_q)$ is a Novikov algebra and $(A,\Delta_q)$ is a Novikov coalgebra. Furthermore, there are three possibilities:
\begin{enumerate}
\item \label{it:a1}
Suppose that the condition that either
 $q=-\frac{1}{2}$ or $Q$ is a derivation
on $(A, \cdot_A)$ and $D^\ast$ is a derivation on $(A^\ast,
\cdot_{A^\ast})$ is satisfied. Then $(A, \circ_q, \Delta_q)$ is a Novikov bialgebra, and the Novikov algebra structure on $A\oplus A^*$ is
induced from the admissible commutative differential algebra $(A\oplus A^\ast, \cdot, D+Q^\ast, Q+D^\ast)$. Equivalently,
the corresponding Manin triples of Novikov algebras $(B, (A,
\circ_q), (A^\ast, \circ_q^\ast))$ is induced from the
corresponding double construction of a commutative differential
Frobenius algebra $((A\oplus A^\ast, \cdot, D+Q^\ast), (A, \cdot_A, D),
(A^\ast, \cdot_{A^\ast}, Q^\ast))$  by
Proposition~\ref{double construction}.
\item \label{it:a2} Suppose that the condition that either
 $q=-\frac{1}{2}$ or $Q$ is a derivation
on $(A, \cdot_A)$ and $D^\ast$ is a derivation on $(A^\ast,
\cdot_{A^\ast})$ is not satisfied, but the conditions in
Proposition~\ref{prostr1} are satisfied. Then $(A, \circ_q, \Delta_q)$ is still a Novikov bialgebra. However, the Novikov
algebra structure on $A\oplus A^*$ is not induced from the
admissible commutative differential algebra $(A\oplus A^*, \cdot, D+Q^\ast,
Q+D^\ast)$. Equivalently, the corresponding Manin triples of
Novikov algebras $(A\oplus A^*, (A, \circ_q), (A^\ast, \circ_q^\ast))$ is
not induced from the corresponding double construction of a
commutative differential Frobenius algebra $((A\oplus A^*, \cdot, D+Q^\ast),
(A, \cdot_A, D), (A^\ast, \cdot_{A^\ast}, Q^\ast))$ by Proposition~\ref{double construction}.
\item \label{it:a3}
Suppose that the conditions in Proposition~\ref{prostr1} are not satisfied. Then $(A, \circ_q, \Delta_q)$ is not a Novikov bialgebra.
\end{enumerate}
As we will see, Example~\ref{examp-Novikov-2} provides instances for all the three possibilities. In particular, Case \meqref{it:a2} does occur.
Thus, for the condition that either $q=-\frac{1}{2}$ or $Q$ is a derivation
on $(A, \cdot_A)$ and $D^\ast$ is a derivation on $(A^\ast,
\cdot_{A^\ast})$, it is only a sufficient condition, but not a necessary condition for
$(A,\circ_q,\Delta_q)$ to be a Novikov bialgebra. It becomes a sufficient and necessary condition for $(A,\circ_q,\Delta_q)$ to be a Novikov bialgebra after the extra restriction that the Novikov algebra structure on $A\oplus A^*$ is required to be induced from the admissible commutative
differential algebra $(A\oplus A^\ast, \cdot, D+Q^\ast, Q+D^\ast)$ (see Proposition~\mref{double construction}). 
\vspb
\end{rmk}

\section{Novikov Yang-Baxter equation via admissible associative Yang-Baxter equation}
\mlabel{s:ybe} Let $(A, \cdot, D, Q)$ be an admissible commutative
differential algebra. In this section, we will give a construction
of antisymmetric solutions of the Novikov Yang-Baxter
equation in $(A, \circ_q)$ from antisymmetric
solutions of the admissible associative Yang-Baxter equation in
$(A, \cdot, D, Q)$. The relations among $\mathcal{O}$-operators,
admissible differential Zinbiel algebras and pre-Novikov algebras
are established. Moreover, we show that there is a natural
construction of Novikov bialgebras from admissible differential
Zinbiel algebras.

\subsection{NYBE via AYBE: the general case}
Let $(V, \ast)$ be a vector space with a binary operation $\ast$ and take $r=\sum_i x_i\otimes y_i\in V\otimes V$.
Define
\vspb
$$r_{13}\ast r_{23}:=\sum_{i, j}x_i\otimes x_j\otimes y_i \ast y_j,
~~r_{12}\ast r_{23}:=\sum_{i, j} x_i \otimes y_i\ast x_j\otimes y_j,
r_{13}\ast r_{12}:=\sum_{i, j}x_i\ast x_j \otimes y_j \otimes y_i.
$$

Let $(A, \circ)$ be a Novikov algebra and denote $a\star b:=a\circ b+b\circ a$ for $a, b\in A$. The equation
\begin{eqnarray}
r\bullet r\coloneqq r_{13}\circ r_{23} +r_{12}\star r_{23}+r_{13}\circ r_{12}=0
\end{eqnarray}
is called the {\bf Novikov Yang-Baxter equation (NYBE)} in $(A, \circ)$.

\begin{pro}\cite[Corollary 3.23]{HBG} \mlabel{Nov-coboundary} If $r\in A\otimes A$ is an antisymmetric solution of the NYBE in $(A,\circ)$, then $(A, \circ, \Delta_r)$ is a Novikov bialgebra with $\Delta_r$ defined by
\begin{eqnarray}\mlabel{co1}
\Delta_r(a)\coloneqq (L_{\circ}(a)\otimes \id+\id\otimes L_{\star}(a))r,\;\;\;\; a\in A.
\end{eqnarray}
\end{pro}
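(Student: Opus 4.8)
The plan is to verify directly that $(A,\Delta_r)$ is a Novikov coalgebra, i.e.\ that $\Delta_r$ satisfies \meqref{Lc3}--\meqref{Lc4}, and that the compatibility conditions \meqref{Lb5}--\meqref{Lb7} between $\circ$ and $\Delta_r$ hold. Write $r=\sum_i x_i\otimes y_i$, so antisymmetry reads $\sum_i x_i\otimes y_i=-\sum_i y_i\otimes x_i$. Two elementary facts will be used repeatedly. First, since $a\star b=a\circ b+b\circ a$ we have $L_{\star}(b)=L_{\circ}(b)+R_{\circ}(b)$, hence $L_{\star}(b)-L_{\circ}(b)=R_{\circ}(b)$. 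Second, applying $\tau$ to $\Delta_r(b)$ and relabelling the index of $r$ via antisymmetry gives $\tau\Delta_r(b)=-(\id\otimes L_{\circ}(b)+L_{\star}(b)\otimes\id)r$, so that
\[
\Delta_r(b)+\tau\Delta_r(b)=(\id\otimes R_{\circ}(b)-R_{\circ}(b)\otimes\id)r,
\]
which is exactly the shape appearing on the right-hand sides of \meqref{Lb5} and \meqref{Lb7}.

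The uniform mechanism for the compatibility conditions is this: substitute the definition of $\Delta_r$ (and the box above for $\Delta_r+\tau\Delta_r$) into each identity, expand into sums over the indices of $r$, then use only the Novikov axioms \meqref{lef} and \meqref{Nov} to reshuffle parentheses and the antisymmetry of $r$ to relabel dummy indices, so that each identity is reduced to an equality that is manifestly a consequence of the Novikov Yang--Baxter element $r\bullet r=r_{13}\circ r_{23}+r_{12}\star r_{23}+r_{13}\circ r_{12}$ vanishing. For \meqref{Lb5} this is especially transparent: after the substitution the right-hand side collapses to $\sum_i(a\circ x_i)\circ b\otimes y_i+\sum_i x_i\otimes a\star(y_i\circ b)$, and matching it against $\Delta_r(a\circ b)$ reduces, leg by leg, to the identity \meqref{Nov} in the first factor and to the purely Novikov identity $(a\circ b)\star c=a\star(c\circ b)$ (itself a consequence of \meqref{lef} and \meqref{Nov}) in the second, so \meqref{Lb5} actually holds for \emph{any} antisymmetric $r$. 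Conditions \meqref{Lb6} and \meqref{Lb7} each say that a certain bilinear expression is symmetric under $a\leftrightarrow b$; expanding as above and relabelling rewrites the difference of the two sides as $(H\otimes\id+\id\otimes H)r$ for an operator $H=H(a,b)$ built from $L_{\circ},R_{\circ},L_{\star}$, which one identifies, after a further use of \meqref{lef} and \meqref{Nov}, as a piece of $r\bullet r$ and hence zero.

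The Novikov coalgebra axioms \meqref{Lc3} and \meqref{Lc4} are the computational core. One expands $(\id\otimes\Delta_r)\Delta_r(a)$, $(\Delta_r\otimes\id)\Delta_r(a)$ and their $\tau$-twists into rank-three tensors that are quadratic in $r$ (built from $x_i,y_i,x_j,y_j$ and $\circ$, with one slot carrying an action of $a$), and then applying \meqref{lef} and \meqref{Nov} leg by leg together with antisymmetry turns the required equalities into the statement that $a$ acts trivially on $r\bullet r$, which holds since $r\bullet r=0$.

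The genuine difficulty here is bookkeeping, not concept: one must track carefully which of the three summands of $r\bullet r$ is produced in each tensor leg of each identity, and every place where antisymmetry is used to pass from one summand to another; the fact that the middle term of the NYBE carries $\star$ rather than a uniform $\circ$ is precisely what makes this accounting more delicate than in the classical coboundary constructions for Lie or associative bialgebras. As an alternative that avoids the long calculation, one can view the antisymmetric $r$ as a linear map $A^{\ast}\to A$, transport the Novikov product of $A$ along it to a Novikov product on $A^{\ast}$, check — again using $r\bullet r=0$ — that $A\oplus A^{\ast}$ with the bilinear form \meqref{bilinear} is a Manin triple of Novikov algebras in the sense recalled above, and then invoke Proposition~\mref{cc1}, verifying that the coproduct it returns coincides with the $\Delta_r$ of \meqref{co1}.
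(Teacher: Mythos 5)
This proposition is quoted from \cite[Corollary 3.23]{HBG}; the present paper gives no proof of it, so there is nothing internal to compare against, and your proposal must be judged on its own terms. As an outline it is sound, and the two identities you state explicitly are correct: $\tau\Delta_r(b)=-(\id\otimes L_{\circ}(b)+L_{\star}(b)\otimes\id)r$ and hence $\Delta_r(b)+\tau\Delta_r(b)=(\id\otimes R_{\circ}(b)-R_{\circ}(b)\otimes\id)r$, and the auxiliary Novikov identity $(a\circ b)\star c=a\star(c\circ b)$ does follow from \meqref{lef} and \meqref{Nov}, so your verification of \meqref{Lb5} for arbitrary antisymmetric $r$ is right. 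One part of your account is inaccurate, though not fatally: \meqref{Lb6} and \meqref{Lb7} also hold for \emph{every} antisymmetric $r$, with no input from the NYBE. For \meqref{Lb7} the difference of the two sides is $\sum_i x_i\otimes\bigl((y_i\circ b)\circ a-(y_i\circ a)\circ b\bigr)+\bigl((x_i\circ b)\circ a-(x_i\circ a)\circ b\bigr)\otimes y_i$, which vanishes by \meqref{Nov} alone; for \meqref{Lb6} the $a\leftrightarrow b$-symmetric term $\sum_i a\star x_i\otimes b\star y_i+b\star x_i\otimes a\star y_i$ cancels and the remainder vanishes by $a\star(b\circ c)=b\star(a\circ c)$, again a consequence of \meqref{lef} and \meqref{Nov}. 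So the difference cannot be "a piece of $r\bullet r$" (a $2$-tensor is not a slice of a $3$-tensor); the NYBE is needed only for the coalgebra axioms \meqref{Lc3}--\meqref{Lc4}, which you correctly identify as the computational core but do not carry out. For that core, the alternative you mention in your last sentence is the efficient one and is essentially how \cite{HBG} proceeds: by Proposition~\mref{Nov-o-operator}, $r$ solves the NYBE iff $T^r$ is an $\mathcal O$-operator associated to $(A^*,L_{\star}^*,-R_{\circ}^*)$, and one checks directly that $\Delta_r^*$ is exactly the descendent Novikov product of the induced pre-Novikov structure on $A^*$, which gives \meqref{Lc3}--\meqref{Lc4} without the rank-three bookkeeping.
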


Let $(A,\cdot)$  be a commutative associative algebra. The equation
\begin{equation}\mlabel{eq:AYBE}
r_{13}\cdot r_{12}+r_{13}\cdot r_{23}-r_{12}\cdot r_{23}=0
\end{equation}
is called the {\bf associative Yang-Baxter equation (AYBE)} in $(A, \cdot)$.

Recall \mcite{Bai} that if $r\in A\otimes A$ is an antisymmetric solution of the AYBE in $(A,\cdot)$, then
there is a commutative and cocommutative ASI bialgebra $(A, \cdot,
\delta_r)$, where $\delta_r$ is defined by
\begin{eqnarray}\mlabel{cobass1}
\delta_r(a)=(\id\otimes L_{\cdot}(a)-L_{\cdot}(a)\otimes \id)r,\;\;\;\;a\in A.
\end{eqnarray}

\begin{defi}  Let $(A,\cdot, D, Q)$ be an admissible commutative
differential algebra and $r\in A\otimes A$.
If $r$ is a solution of the AYBE in $(A, \cdot)$ which
satisfies
\vspb
\begin{eqnarray}
&&(D\otimes \id-\id\otimes Q)(r)=0,\\
&&(\id\otimes D-Q\otimes \id)(r)=0,
\end{eqnarray}
then  $r$ is called a solution of the {\bf admissible associative
Yang-Baxter equation (admissible AYBE)} in $(A,\cdot, D, Q)$.
\end{defi}

\begin{lem}\cite[Corollary 4.9]{LLB} \mlabel{diff-coboundary}
Let $(A, \cdot, D, Q)$ be an admissible commutative differential
algebra and $r\in A\otimes A$. If $r$ is an
antisymmetric solution of the admissible AYBE in $(A, \cdot, D,
Q)$, then $(A, \cdot, \delta_r, D, Q)$ is a commutative and
cocommutative differential ASI bialgebra, where $\delta_r$ is
given by Eq. \meqref{cobass1}.
\end{lem}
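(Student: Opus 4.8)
The plan is to unpack the conclusion through Definition~\mref{de:admdifbialg}: we must verify (1) that $(A,\cdot,\delta_r)$ is an ASI bialgebra, (2) that $(A,\cdot,D,Q)$ is an admissible commutative differential algebra, and (3) that $(A,\delta_r,Q,D)$ is an admissible cocommutative differential coalgebra. Part (2) is precisely the hypothesis. Part (1) follows at once from the classical fact recalled above (due to~\mcite{Bai}) that an antisymmetric solution of the AYBE in $(A,\cdot)$ makes $(A,\cdot,\delta_r)$ a commutative and cocommutative ASI bialgebra; in particular $(A,\delta_r)$ is then a cocommutative coassociative coalgebra, which is half of (3). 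Thus the entire content of the proof is the remaining half of (3): that $Q$ is a coderivation of $(A,\delta_r)$, and that $D$ satisfies Eq.~\meqref{eq:RSI2} with respect to $\delta_r$.

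Before computing I would record a small but useful symmetry. Writing $r=\sum_i x_i\otimes y_i$, the antisymmetry $\tau(r)=-r$ shows that the two hypotheses $(D\otimes\id-\id\otimes Q)(r)=0$ and $(\id\otimes D-Q\otimes\id)(r)=0$ are equivalent (apply $\tau$ to either), so throughout we may use both
$$(D\otimes\id)(r)=(\id\otimes Q)(r)\quad\text{and}\quad(Q\otimes\id)(r)=(\id\otimes D)(r).$$
For the coderivation identity, I would expand $\delta_r(Q(a))$ and $(\id\otimes Q+Q\otimes\id)\delta_r(a)$ using $\delta_r(a)=\sum_i\left(x_i\otimes(a\cdot y_i)-(a\cdot x_i)\otimes y_i\right)$ and substitute the admissibility relation in the form $Q(a\cdot z)=Q(a)\cdot z-a\cdot D(z)$ (i.e.\ Eq.~\meqref{eq:RSI1}) into the terms $Q(a\cdot x_i)$ and $Q(a\cdot y_i)$. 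The terms carrying $Q(a)$ reassemble precisely into $\delta_r(Q(a))$, and the residue collects into
$$(\id\otimes L_{\cdot}(a))[(Q\otimes\id)(r)-(\id\otimes D)(r)]+(L_{\cdot}(a)\otimes\id)[(D\otimes\id)(r)-(\id\otimes Q)(r)],$$
which vanishes by the two equalities above. Hence $Q$ is a coderivation of $(A,\delta_r)$.

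For Eq.~\meqref{eq:RSI2}, namely $(D\otimes\id-\id\otimes Q)\delta_r(a)=\delta_r(D(a))$, I would expand the left side in the same way, now using the Leibniz rule $D(a\cdot x_i)=D(a)\cdot x_i+a\cdot D(x_i)$ and the \emph{other} form of the admissibility relation, $Q(a\cdot y_i)=a\cdot Q(y_i)-D(a)\cdot y_i$ (valid by commutativity, as noted in the excerpt). The terms carrying $D(a)$ reassemble into $\delta_r(D(a))$, while the residue is exactly $(\id\otimes L_{\cdot}(a)-L_{\cdot}(a)\otimes\id)$ applied to $(D\otimes\id-\id\otimes Q)(r)=0$ — that is, it has the same shape as $\delta_r$ itself evaluated on the vanishing element $(D\otimes\id-\id\otimes Q)(r)$ — and so is zero. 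Together with (1) and (2), this establishes all three clauses of Definition~\mref{de:admdifbialg}, so $(A,\cdot,\delta_r,D,Q)$ is a commutative and cocommutative differential ASI bialgebra.

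The only delicate point is purely organizational: one must choose, in each of the two tensor slots of $\delta_r(a)$, \emph{which} form of the admissibility identity $Q(a\cdot z)=Q(a)\cdot z-a\cdot D(z)=a\cdot Q(z)-D(a)\cdot z$ to substitute, so that the unwanted terms line up against the hypotheses on $r$ rather than proliferating; the antisymmetry of $r$ is what makes the leftover pieces cancel against a \emph{single} relation. One could instead argue on the dual side via Proposition~\mref{pro:dual-adm}, reducing the coalgebra-side admissibility of $(A,\delta_r,Q,D)$ to the algebra-side admissibility of $(A^*,\delta_r^*,Q^*,D^*)$, but since $\delta_r^*$ is again built from $r$ this merely reshuffles rather than removes the bookkeeping.
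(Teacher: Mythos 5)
Your proposal is correct. The paper itself gives no proof of this lemma --- it is imported verbatim as \cite[Corollary 4.9]{LLB} --- so there is no argument in the text to compare against; your direct verification is a valid, self-contained substitute. The decomposition into the three clauses of Definition~\mref{de:admdifbialg} is the right one, clause (1) and the cocommutativity/coassociativity half of clause (3) do follow from Bai's coboundary construction as you say, and I have checked that both residue computations come out as you describe: for the coderivation identity the leftover terms assemble into $(\id\otimes L_{\cdot}(a))\bigl[(Q\otimes\id-\id\otimes D)(r)\bigr]+(L_{\cdot}(a)\otimes\id)\bigl[(D\otimes\id-\id\otimes Q)(r)\bigr]$, and for Eq.~\meqref{eq:RSI2} they assemble into $(\id\otimes L_{\cdot}(a)-L_{\cdot}(a)\otimes\id)$ applied to $(D\otimes\id-\id\otimes Q)(r)$, all of which vanish by the admissible-AYBE hypotheses (whose two conditions are indeed equivalent under antisymmetry of $r$, as you observe).
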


Next, we give a construction of solutions of NYBE from
antisymmetric solutions  of admissible AYBE.

\begin{pro}\mlabel{Corr-Yang-Baxter}
Let $(A,\cdot, D, Q)$ be an admissible commutative differential
algebra and $(A, \circ_q)$ be the induced Novikov algebra. Let
$r\in A\otimes A$ be an antisymmetric
solution of the admissible AYBE in $(A,\cdot, D, Q)$.  Then the
following conclusions hold.
\begin{enumerate}
\item \label{it:aa} Unconditionally, $r$ is a solution of the NYBE
in $(A,\circ_{-\frac{1}{2}})$. \item\label{it:bb}  If $Q$ is
moreover a derivation on $(A,\cdot)$, then $r$ is a solution of
the NYBE in $(A,\circ_q)$. In particular, if $r$ is an antisymmetric solution of the admissible
AYBE in $(A,\cdot, D, -D)$, then $r$ is a solution of the NYBE in
$(A,\circ_q)$.
\end{enumerate}
\end{pro}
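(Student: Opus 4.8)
The plan is to verify the \nybe directly, by expanding $r\bullet r$ in the Novikov algebra $(A,\circ_q)$ and reducing it to the AYBE tensor (which vanishes) plus correction terms governed by the two defining relations of the admissible AYBE. Write $r=\sum_i a_i\otimes b_i$, so that antisymmetry reads $\sum_i a_i\otimes b_i=-\sum_i b_i\otimes a_i$, and abbreviate $P:=D+qQ$ and $S:=Q+qD$, so that $a\circ_q b=a\cdot P(b)$ and $a\star_q b=a\cdot P(b)+b\cdot P(a)$. Expanding the three summands of $r\bullet r$ and sliding $P$ into the relevant tensor leg gives
\[
r\bullet r=r_{13}\cdot r'_{23}+r_{12}\cdot r''_{23}+r'_{12}\cdot r_{23}+r_{13}\cdot r''_{12},
\]
where $r':=(\id\otimes P)(r)$ and $r'':=(P\otimes\id)(r)$.

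Next I would record the transfer identities: the two admissible-AYBE conditions $(D\otimes\id-\id\otimes Q)(r)=0$ and $(\id\otimes D-Q\otimes\id)(r)=0$ immediately yield $(\id\otimes P)(r)=(S\otimes\id)(r)$ and $(P\otimes\id)(r)=(\id\otimes S)(r)$, so that the operator attached to a ``modified'' copy of $r$ may be slid onto whichever leg is convenient; together with $\tau(r)=-r$, this lets me bring each of the four summands above into a normal form comparable termwise with the AYBE tensor $r_{13}\cdot r_{12}+r_{13}\cdot r_{23}-r_{12}\cdot r_{23}$. The technical heart is this comparison: the only AYBE summand carrying an internal product of two legs is $r_{13}\cdot r_{23}$, so aligning it with $r_{13}\cdot r'_{23}$ forces a use of the Leibniz rule for $D$, producing extra terms that must then be reabsorbed, once again via the transfer identities and antisymmetry. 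Keeping track of which leg each occurrence of $D$ or $Q$ sits on, invoking Leibniz only where a genuine product appears, and checking that no term is double-counted, is the step where I expect the real work --- and the risk of sign and index errors --- to concentrate; this is the same phenomenon that forced the explicit cancellation at $q=-\tfrac12$ in the proof of Theorem~\mref{bialgebra-constr}, part~\meqref{it:const2}, for Eq.~\meqref{eq:bialg1}.

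After this reorganization, $r\bullet r$ should split into a part obtained by applying operators to the AYBE tensor, plus a residue built from two copies of $r$ and the operators $D,Q$; the first part is $0$ since $r$ solves the AYBE. For part~\meqref{it:aa}, where $q=-\tfrac12$, I expect the residue to collapse identically after a short manipulation with the transfer identities, mirroring the short cancellation in the proof of Theorem~\mref{bialgebra-constr}, part~\meqref{it:const2}; hence $r\bullet r=0$ with no extra hypothesis. For part~\meqref{it:bb}, if moreover $Q$ is a derivation on $(A,\cdot)$, then the Leibniz rule for $Q$ together with the admissibility identity~\meqref{eq:RSI1} gives $a\cdot(D+Q)(b)=0$ for all $a,b\in A$ (Proposition~\mref{pp:bialgder0}, part~\meqref{itemd1}); since every summand of the residue carries, up to the transfer identities, a factor of this shape, it vanishes, so $r\bullet r=0$ again. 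The case $Q=-D$ is then immediate, as it makes $D+Q=0$ outright and collapses the admissible-AYBE conditions to the single relation $((D\otimes\id)+(\id\otimes D))(r)=0$.

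Finally, as a consistency check one may pass through the bialgebra picture: by Lemma~\mref{diff-coboundary}, $(A,\cdot,\delta_r,D,Q)$ is a commutative and cocommutative differential ASI bialgebra, and a short computation with~\meqref{cobass1}, the transfer identities and antisymmetry identifies the coproduct $(\id\otimes(Q+qD))\delta_r$ produced by Theorem~\mref{bialgebra-constr} with the \nybe-coboundary $\Delta_r$ of Proposition~\mref{Nov-coboundary}, so that $(A,\circ_q,\Delta_r)$ is a Novikov bialgebra under the present hypotheses --- consistent with, but not a substitute for, the computation above, since a coboundary $\Delta_r$ may define a Novikov bialgebra even when $r\bullet r$ is only ``invariant''. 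The reason the two alternatives $q=-\tfrac12$ and ``$Q$ a derivation'' are the correct ones is again transparent through the double-construction/Manin-triple dictionary, cf.\ Remark~\mref{rk:square4}.
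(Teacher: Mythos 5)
Your proposal matches the paper's proof: the paper expands $r\bullet r$ exactly as you do, uses the two admissible-AYBE conditions as ``transfer identities'' to rewrite the three summands as $(\id\otimes(Q+qD)\otimes\id)$ applied to the AYBE tensor plus a residue, and that residue works out (via the admissibility relation applied to the middle leg) to $(1+2q)\sum_{i,j}x_i\otimes\big((D+Q)(x_j)\cdot y_i\big)\otimes y_j$, which vanishes either because $1+2q=0$ or because $Q$ being a derivation forces $(D+Q)(a)\cdot b=0$. Your decomposition, transfer identities, and the two vanishing mechanisms are all exactly the paper's; the only piece left implicit in your sketch is the explicit expansion producing the scalar factor $(1+2q)$.
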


\begin{proof}
Let $r=\sum_i x_i\otimes y_i$. Then we have
{\wuhao\begin{eqnarray*}
r\bullet r&=&r_{13}\circ_{q} r_{23}+r_{12}\star_q r_{23}+r_{13}\circ_q r_{12}\\
&=& \sum_{i,j}(x_i\otimes x_j\otimes y_i\cdot (D+qQ)y_j+x_i\otimes (y_i\cdot (D+qQ)x_j+x_j\cdot (D+qQ)y_i)\otimes y_j\\
&&\quad+x_i\cdot (D+qQ)x_j\otimes y_j\otimes y_i)\\
&=& \sum_{i,j}(x_i\otimes (Q+qD)x_j\otimes y_i\cdot y_j+x_i\otimes (y_i\cdot (D+qQ)x_j+x_j\cdot (D+qQ)y_i) \otimes y_j\\
&&\quad+x_i\cdot x_j\otimes (Q+qD) y_j\otimes y_i)\\
&=&(\id\otimes (Q+qD)\otimes \id)(r_{12}\cdot r_{13}+r_{13}\cdot r_{23})+\sum_{i,j}(x_i\otimes (y_i\cdot (D+qQ)x_j+x_j\cdot (D+qQ)y_i) \otimes y_j)\\
&=& (\id\otimes (Q+qD)\otimes \id) (r_{23}\cdot r_{12})+\sum_{i,j}(x_i\otimes (y_i\cdot (D+qQ)x_j+x_j\cdot (D+qQ)y_i) \otimes y_j)\\
&=&\sum_{i,j}x_i\otimes ((Q+qD)(x_j\cdot y_i)+y_i\cdot (D+qQ)x_j+x_j\cdot (D+qQ)y_i)\otimes y_j\\
&=& \sum_{i,j}x_i\otimes ((1+2q)(D+Q)(x_j)\cdot y_i)\otimes y_j.
\end{eqnarray*}}
Note that by Proposition \ref{pro:cond}, if $Q$ is a derivation on $(A,\cdot)$, then $(D+Q)(a)\cdot b=0$ for all $a$, $b\in A$.
Thus if $q=-\frac{1}{2}$ or $Q$ is a derivation on $(A,\cdot)$, then $(1+2q)(D+Q)(x_j)\cdot y_i=0$. Then $r$ is a solution of the NYBE.
\end{proof}

Let $(A, \cdot, D, Q)$ be an admissible commutative differential
algebra and $(A, \circ_q)$ be the induced Novikov algebra. Let
$r\in A\otimes A$ be an antisymmetric
solution of the admissible AYBE in $(A,\cdot, D, Q)$. Let
$\delta_r: A\rightarrow A\otimes A$ be the linear map defined by
Eq. \meqref{cobass1}. Then by Lemma \mref{diff-coboundary}, $(A,
\cdot, \delta_r, D, Q)$ is a commutative and cocommutative
differential ASI bialgebra. Note that $(A, \cdot, -\delta_r, D,
Q)$ is also a commutative and cocommutative differential ASI
bialgebra. Let $\Delta_q: A\rightarrow A\otimes A$ be the linear
map obtained from $-\delta_r$ by Eq. \meqref{eq:cons2}. Then we
have the following conclusions:
\begin{enumerate}
\item  On the one hand, by Proposition~\ref{Corr-Yang-Baxter}
(\ref{it:aa}), $r$ is a solution of the NYBE in
$(A,\circ_{-\frac{1}{2}})$. Hence by
Proposition~\ref{Nov-coboundary}, there is a Novikov bialgebra
$(A, \circ_{-\frac{1}{2}}, \Delta_{-\frac{1}{2}, r})$, where
$\Delta_{-\frac{1}{2}, r}$ is defined by Eq.~\meqref{co1}. On the
other hand, by Theorem \mref{bialgebra-constr} (\ref{it:aaa}),
there is a Novikov bialgebra $(A, \circ_{-\frac{1}{2}},
\Delta_{-\frac{1}{2}})$.
\item \mlabel{it:cob1} Suppose that $Q$ is a derivation on $(A,\cdot)$.  On the one hand, by
Proposition~\ref{Corr-Yang-Baxter} (\ref{it:bb}), $r$ is a
solution of the NYBE in $(A,\circ_q)$. Hence by
Proposition~\ref{Nov-coboundary}, there is a Novikov bialgebra
$(A, \circ_q, \Delta_{q, r})$, where $\Delta_{q, r}$ is defined by
Eq.~\meqref{co1}. On the other hand, note that in this case, Eqs.
(\mref{eq:conda}) and (\ref{eq:condb}) hold. Hence by Theorem \mref{bialgebra-constr}
(\ref{it:bbb}), there is a Novikov bialgebra $(A, \circ_q,
\Delta_{q})$.
\end{enumerate}

\begin{pro}\mlabel{pp:rmatbialg} With above notations, we
have the following conclusions.
\begin{enumerate}
\item \mlabel{it:cob2} The two Novikov bialgebras $(A,
\circ_{-\frac{1}{2}}, \Delta_{-\frac{1}{2}, r})$ and  $(A,
\circ_{-\frac{1}{2}}, \Delta_{-\frac{1}{2}})$ coincide.
\item
Suppose that  $Q$ is a derivation on $(A,\cdot)$. Then for each $q\in {\bf
k}$, the two Novikov bialgebras $(A, \circ_q, \Delta_{q, r})$ and $(A,
\circ_q, \Delta_{q})$ coincide.
\end{enumerate}
That is, the following diagram commutes.
\vspb
$$  \xymatrix{
    \txt{\tiny $r$ \\ \tiny an antisymmetric solution\\ \tiny of the admissible AYBE in $(A, \cdot, D, Q)$} \ar[rr]^-{\rm Lem. \mref{diff-coboundary}}   \ar[d]_-{\rm Prop. \mref{Corr-Yang-Baxter}}&& \txt{ \tiny $(A,\cdot,\delta_r)$ \\ \tiny a commutative and cocommutative\\ \tiny differential ASI bialgebra} \ar[d]_-{\rm Thm. \mref{bialgebra-constr}}\\
    \txt{\tiny $r$\\ \tiny an antisymmetric solution \\
    	\tiny of the NYBE in $(A,\circ_q)$} \ar[rr]^-{\rm
        Prop. \mref{Nov-coboundary}}
    && \txt{\tiny $(A,\circ_q,\Delta_{q,r})=(A,\circ_q,\Delta_q)$ \\ \tiny a Novikov bialgebra}  }
\vspb
$$
\end{pro}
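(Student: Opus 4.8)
The plan is to prove Proposition~\mref{pp:rmatbialg} by showing that the two coproducts on $A$ produced by the two routes around the square agree, which by the commutative diagram description amounts to a direct comparison of linear maps $A\to A\otimes A$. For item~(\mref{it:cob2}) (the $q=-\frac12$ case), the left-down-right route produces $\Delta_{-\frac12,r}$ via Eq.~\meqref{co1} applied to the Novikov algebra $(A,\circ_{-\frac12})$ and the antisymmetric solution $r$ of the NYBE guaranteed by Proposition~\mref{Corr-Yang-Baxter}(\mref{it:aa}); the up-right-down route produces $\Delta_{-\frac12}$ via Eq.~\meqref{eq:cons2} applied to $-\delta_r$, i.e. $\Delta_{-\frac12}(a)=(\id\otimes(Q-\tfrac12 D))(-\delta_r(a))$. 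So I would write $r=\sum_i x_i\otimes y_i$, expand both expressions in terms of $r$, and reconcile them.

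The key computational step is to unwind $\Delta_r(a)=(L_{\circ_q}(a)\otimes\id+\id\otimes L_{\star_q}(a))r$ using $a\circ_q b=a\cdot(D+qQ)b$ and $a\star_q b = a\circ_q b + b\circ_q a = a\cdot(D+qQ)b+b\cdot(D+qQ)a$, together with the admissibility relations Eq.~\meqref{eq:RSI1}, the coalgebra admissibility, and the defining properties $(D\otimes\id-\id\otimes Q)(r)=0=(\id\otimes D-Q\otimes\id)(r)$ of an admissible AYBE solution. On the other side, $-\delta_r(a)=-(\id\otimes L_\cdot(a)-L_\cdot(a)\otimes\id)r = (L_\cdot(a)\otimes\id-\id\otimes L_\cdot(a))r$, so $(\id\otimes(Q+qD))(-\delta_r(a))$ becomes $\sum_i\big(a\cdot x_i\otimes (Q+qD)y_i - x_i\otimes (Q+qD)(a\cdot y_i)\big)$. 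Using Eq.~\meqref{eq:RSI1} in the form $Q(a\cdot y_i)=Q(a)\cdot y_i - a\cdot D(y_i)$ (and its commutative variant), and using the antisymmetry of $r$ to rewrite terms like $\sum_i x_i\otimes a\cdot y_i$ as $-\sum_i a\cdot x_i\otimes y_i$ after a flip, I expect both sides to collapse to the same expression; the "cross'' term $x_j\cdot(D+qQ)y_i$ appearing in $L_{\star_q}$ is exactly what, after applying the AYBE relation $r_{13}\cdot r_{12}+r_{13}\cdot r_{23}=r_{12}\cdot r_{23}$ implicitly built into $\delta_r$, produces the matching pieces. For the general $q$ case in the second item, the argument is identical but one additionally invokes that $Q$ is a derivation on $(A,\cdot)$, equivalently Eq.~\meqref{eq:conda} via Proposition~\mref{pro:cond}, so that $(D+Q)(a)\cdot b=0$; this kills precisely the obstruction terms, exactly as in the proof of Proposition~\mref{Corr-Yang-Baxter}.

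Alternatively — and this is the route I would actually prefer to present — I would avoid the raw coproduct computation entirely and argue by uniqueness. By Proposition~\mref{Nov-coboundary} the bialgebra $(A,\circ_q,\Delta_{q,r})$ is the coboundary Novikov bialgebra attached to $r$; by Theorem~\mref{bialgebra-constr} together with Proposition~\mref{corr-bialgebra} the bialgebra $(A,\circ_q,\Delta_q)$ is the one attached to the Manin triple coming from the double construction of the differential Frobenius algebra associated to $(A,\cdot,-\delta_r,D,Q)$. It therefore suffices to observe that the double construction of a commutative differential Frobenius algebra associated to the ASI bialgebra $(A,\cdot,-\delta_r)$ — which by Lemma~\mref{diff-coboundary} is exactly the coboundary ASI bialgebra of the antisymmetric AYBE solution $r$ — is the \emph{standard} (Drinfeld-type) double built from $r$, and that the Manin-triple construction of Novikov algebras from Proposition~\mref{double construction} applied to this standard double reproduces precisely the coboundary construction Eq.~\meqref{co1}. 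This is a known compatibility at the level of the ordinary (non-differential) theories: the coboundary bialgebra from an $r$-matrix equals the bialgebra of the associated standard Manin triple. One then only needs to check that passing to $\circ_q$ and $\Delta_q$ respects this identification, which is immediate because both the Novikov structure on $B=A\oplus A^*$ in Proposition~\mref{double construction} and the coproduct $\Delta_q$ in Proposition~\mref{pro:co-cons} are obtained by the \emph{same} rule $\;\cdot\mapsto\;\cdot\,(D+qQ)(-)\;$ applied compatibly on $A$, on $A^*$, and on $B$.

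The main obstacle I anticipate is bookkeeping rather than conceptual: in the direct approach, correctly tracking the three legs $r_{12},r_{13},r_{23}$ and the flips induced by the antisymmetry of $r$, while simultaneously juggling the algebra-admissibility Eq.~\meqref{eq:RSI1}, the coalgebra-admissibility, and the two vanishing relations defining an admissible AYBE solution, is error-prone; it is easy to drop a sign on the $\tau\Delta$ terms in $L_{\star_q}$. In the uniqueness approach, the subtle point is to verify cleanly that the double construction attached to the coboundary ASI bialgebra $(A,\cdot,-\delta_r)$ is genuinely the standard double determined by $r$ (so that Proposition~\mref{double construction} applies and returns the $r$-matrix Manin triple), and that the minus sign in $-\delta_r$ together with the sign conventions in Eqs.~\meqref{co1} and \meqref{cobass1} match up — this is exactly the place where a spurious sign would break the claimed coincidence. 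Once that identification is in hand, the commutativity of the displayed square is formal.
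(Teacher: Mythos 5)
Your first (direct\nobreakdash-computation) route is essentially the paper's proof. The paper writes $r=\sum_i x_i\ot y_i$ and computes
$$\Delta_q(a)=(\id\ot(Q+qD))(-\delta_r)(a)=-\sum_i\big(x_i\ot (Q+qD)(a\cdot y_i)-a\cdot x_i\ot (Q+qD)(y_i)\big),$$
then rewrites the first summand using the admissibility relation \meqref{eq:RSI1} together with the Leibniz rule for $D$, and the second summand using the two vanishing conditions $(D\ot\id-\id\ot Q)(r)=0=(\id\ot D-Q\ot\id)(r)$, which give $\sum_i a\cdot x_i\ot (Q+qD)(y_i)=\sum_i a\cdot(D+qQ)(x_i)\ot y_i$. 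The residual discrepancy with $(L_{\circ_q}(a)\ot\id+\id\ot L_{\star_q}(a))r$ is exactly $(1+2q)\sum_i x_i\ot (D+Q)(a)\cdot y_i$, which vanishes precisely when $q=-\frac{1}{2}$ or $Q$ is a derivation on $(A,\cdot)$ (Proposition~\ref{pro:cond}). Two small corrections to your description: the AYBE identity itself plays no role in the matching --- only the two vanishing conditions and the algebra admissibility of $Q$ are used --- and the antisymmetry of $r$ is likewise not needed here; moreover $\sum_i x_i\ot a\cdot y_i$ equals $-\sum_i y_i\ot a\cdot x_i$, not $-\sum_i a\cdot x_i\ot y_i$, so that flip has to be handled with care if you do invoke it.

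The route you say you would prefer to present has a genuine gap. The assertion that ``the coboundary bialgebra from an $r$-matrix equals the bialgebra of the associated standard Manin triple,'' transported to the Novikov/differential setting, is essentially the statement being proved; it is established nowhere in the paper, and Proposition~\mref{corr-bialgebra} only identifies $(A,\circ_q,\Delta_q)$ with the Manin-triple bialgebra --- it says nothing about $\Delta_{q,r}$. To close the loop you would still have to show that the Manin triple of the coboundary Novikov bialgebra $(A,\circ_q,\Delta_{q,r})$ coincides with the one produced by Proposition~\mref{double construction} from the double construction attached to $(A,\cdot,-\delta_r)$; since a Manin triple on $A\oplus A^\ast$ encodes the coproduct through the product on $A^\ast$, this is equivalent to the coproduct identity you were trying to avoid. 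As stated the ``uniqueness'' argument is therefore circular, and the direct computation of your first route is the one to present.
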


\begin{proof}
We only prove \meqref{it:cob1}. Item~\meqref{it:cob2} can be similarly proved.

Let $r=\sum_i x_i\otimes y_i$ and $a\in A$. Then we have
{\wuhao\begin{eqnarray*}
\Delta_q(a)&=& (\id \otimes (Q+qD))(-\delta_r)(a)\\
&=&-(\id\otimes (Q+qD))\sum_i(x_i\otimes a\cdot y_i-a\cdot x_i\otimes y_i)\\
&=&-\sum_i(x_i\otimes (Q+qD)(a\cdot y_i)-a\cdot x_i\otimes (Q+qD)(y_i))\\
&=& -\sum_i (x_i\otimes (Q(a)\cdot y_i-a\cdot D(y_i)+qD(a)\cdot y_i+qa\cdot D(y_i))-a\cdot (D+qQ)x_i\otimes y_i)\\
&=&\sum_i(x_i\otimes (a\cdot (D+qQ)y_i+y_i\cdot (D+qQ)a-(1+2q)(D+Q)(a)\cdot y_i)+a\cdot (D+qQ)(x_i)\otimes y_i)\\
&=&\sum_i (a\circ_q x_i\otimes y_i+x_i\otimes (a\circ_q y_i+y_i\circ_q a))\\
&=& (L_{\circ_q}(a)\otimes \id+\id\otimes
L_{\star_q}(a))r=\Delta_{q,r}.
\end{eqnarray*}}
The proof is completed.
\vspb
\end{proof}

\subsection{NYBE via AYBE: $\mathcal O$-operators}
Recall the notion of representations of Novikov algebras.
\begin{defi} \mcite{O}
A {\bf representation} of a Novikov algebra $(A,\circ)$ is a triple $(V, l_A,r_A)$, where $V$ is a vector space and   $l_A$, $r_A: A\rightarrow {\rm
End}_{\bf k}(V)$ are linear maps satisfying
\vspb
\begin{eqnarray}
\mlabel{lef-mod1}&l_A(a\circ b-b\circ a)v=l_A(a)l_A(b)v-l_A(b)l_A(a)v,&\\
\mlabel{lef-mod2}&l_A(a)r_A(b)v-r_A(b)l_A(a)v=r_A(a\circ b)v-r_A(b)r_A(a)v,&\\
\mlabel{Nov-mod1}&l_A(a\circ b)v=r_A(b)l_A(a)v,&\\
\mlabel{Nov-mod2}&r_A(a)r_A(b)v=r_A(b)r_A(a)v,\quad a, b\in A, v\in V.&
\end{eqnarray}
\end{defi}

Note that $(A, L_{\circ}, R_{\circ})$ is a representation of $(A,\circ)$, called the
\textbf{adjoint representation} of $(A,\circ)$.

\begin{pro}\cite[Proposition 3.2]{HBG}\mlabel{pro:semi}
Let $(A,\circ)$ be a Novikov algebra. Let $V$ be a vector space
and $l_A,r_A: A\to {\rm End}_{\bf k}(V)$ be linear maps.
Define a binary operation $\circ$ 
on the direct sum $A\oplus V$ of the underlying vector spaces of $A$ and $V$ by
\vspb
    $$(a+u)\circ (b+v)\coloneqq a\circ b+l_A(a)v+r_A(b)u, \quad\quad a, b\in A,\;u, v\in V.$$
Then $(V,l_A,r_A)$ is a representation of $(A,\circ)$ if and only
if $(A\oplus V,\circ)$ is a Novikov algebra, called the
{\bf semi-direct product} of $(A, \circ)$ by its representation $(V,l_A,r_A)$ and denoted by
$A\ltimes_{l_A,r_A} V$.
\end{pro}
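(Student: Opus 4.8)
The plan is to prove Proposition~\mref{pro:semi} by a direct but organized comparison of axioms: $(V,l_A,r_A)$ being a representation means Eqs.~\meqref{lef-mod1}--\meqref{Nov-mod2} hold, while $(A\oplus V,\circ)$ being a Novikov algebra means the left-symmetry identity~\meqref{lef} and the right-commutativity identity~\meqref{Nov} hold for all triples of elements $a+u,b+v,c+w$. Since the product $\circ$ on $A\oplus V$ has zero $V$-component only when acting purely within $A$, each Novikov axiom on $A\oplus V$ splits into a component in $A$ and a component in $V$; the $A$-component is automatically the corresponding Novikov axiom for $(A,\circ)$, which holds by hypothesis, so the real content lies entirely in the $V$-components.

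First I would expand $(a+u)\circ(b+v) = a\circ b + l_A(a)v + r_A(b)u$ and iterate to get triple products such as $((a+u)\circ(b+v))\circ(c+w)$ and $(a+u)\circ((b+v)\circ(c+w))$, recording the $V$-component of each. For the left-symmetry identity~\meqref{lef}, the $V$-component of the associator-type difference is a combination of $l_A$ and $r_A$ applied to $u,v,w$; collecting the coefficient of $w$ gives precisely Eq.~\meqref{lef-mod1}, the coefficient of $v$ gives Eq.~\meqref{lef-mod2} (after using left-symmetry of $A$ to simplify), and the coefficient of $u$ gives Eq.~\meqref{Nov-mod1}. For the right-commutativity identity~\meqref{Nov}, the $V$-component splits into the coefficient of $u$, which yields Eq.~\meqref{Nov-mod1} again (or a consequence of it), the coefficient of $v$, which after simplification reduces to Eq.~\meqref{Nov-mod1}, and the coefficient of $w$, which yields Eq.~\meqref{Nov-mod2}. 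Running these computations in both directions establishes the equivalence: if $(V,l_A,r_A)$ is a representation, all $V$-components vanish and $A\oplus V$ is Novikov; conversely, specializing the Novikov axioms on $A\oplus V$ to elements with only one nonzero $V$-entry at a time recovers each of Eqs.~\meqref{lef-mod1}--\meqref{Nov-mod2}.

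I expect the only mild obstacle to be bookkeeping: making sure that when extracting the coefficient of, say, $v$ in a given identity, one correctly uses the already-known $A$-axioms and the other module axioms to see that no spurious extra condition appears — in particular that the "coefficient of $v$" equations in~\meqref{lef} and~\meqref{Nov} are genuinely consequences of~\meqref{lef-mod1}--\meqref{Nov-mod2} and do not impose anything new. This is exactly the standard semidirect-product verification for a nonassociative algebra defined by a representation, so once the coefficient extraction is set up carefully, the proof is a routine check. I would close by noting that the adjoint case $l_A=L_\circ$, $r_A=R_\circ$ recovers the Novikov algebra structure on $A\oplus A$, consistent with the remark preceding the proposition.
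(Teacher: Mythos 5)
The paper does not prove this proposition at all --- it is imported by citation from \cite[Proposition 3.2]{HBG} --- so there is no in-paper argument to compare against; your proposal is the standard direct verification and its strategy is correct. One bookkeeping correction before you write it out: expanding \meqref{lef} on $A\oplus V$, the coefficient of $w$ gives \meqref{lef-mod1} while the coefficients of \emph{both} $v$ and $u$ give instances of \meqref{lef-mod2} (in permuted variables), not \meqref{Nov-mod1}; and expanding \meqref{Nov}, the coefficients of $w$ and $v$ each give \meqref{Nov-mod1} while the coefficient of $u$ gives \meqref{Nov-mod2}. With that relabeling the extracted conditions are exactly the four representation axioms, no simplification using the Novikov identities on $A$ is actually needed in the $V$-components, and both directions of the equivalence follow as you describe.
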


\begin{defi} \mcite{LLB}\label{Def:mo}
A {\bf representation} of a commutative differential algebra $(A, \cdot, D)$ is
a triple $(V, l_A, \alpha)$ where $(V, l_A)$ is a representation of  $(A, \cdot)$ and $\alpha: V\rightarrow V$ is a linear map satisfying
\begin{eqnarray}\mlabel{diff-module}
\alpha (l_A(a)v)=l_A(D(a))v+l_A(a)\alpha(v),\;\;\;\; a\in A, v\in V.
\end{eqnarray}
\end{defi}

Next, we introduce the notion of a representation of an admissible
commutative differential algebra.

\begin{defi}
Let $(A, \cdot, D, Q)$ be an admissible commutative differential algebra and $(V, l_A, \alpha)$ be a representation of $(A, \cdot, D)$. If a linear map $\beta: V\rightarrow V$ satisfies
\begin{eqnarray*}
\beta(l_A(a)v)=l_A(a)\beta(v)-l_A(D(a))v=l_A(Q(a))v-l_A(a)\alpha(v),\;\;a\in A, v\in V,
\end{eqnarray*}
then we say that $(V, l_A, \alpha, \beta)$ is a {\bf representation } of $(A, \cdot, D, Q)$.
\end{defi}

\begin{pro}\mlabel{pro:semi-ACDA}
Let $(A,\cdot, D, Q)$ be an admissible commutative differential algebra. Let $V$ be a vector space
, $l_A: A\to {\rm End}_{\bf k}(V)$ be a linear map and $\alpha$, $\beta: V\rightarrow V$ be linear maps.
Define a binary operation $\cdot$ on the direct sum $A\oplus V$ of the underlying vector spaces of $A$ and $V$ by
    $$(a+u)\cdot (b+v)\coloneqq a\cdot b+l_A(a)v+l_A(b)u, \quad\quad a, b\in A,\;u, v\in V.$$
Then $(V,l_A,\alpha, \beta)$ is a representation of $(A,\cdot, D, Q)$ if and only
if $(A\oplus V,\cdot, D+\alpha, Q+\beta)$ is an admissible commutive differential algebra, called the
{\bf semi-direct product} of $(A,\cdot, D, Q)$ by its representation $(V,l_A,\alpha, \beta)$ and denoted by
$(A\ltimes_{l_A} V, \cdot, D+\alpha, Q+\beta)$.
\end{pro}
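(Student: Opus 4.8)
The plan is to prove this by a straightforward verification along the lines of Proposition~\mref{pro:semi} and the analogous semi-direct product constructions, the only subtlety being that there are now several axioms to check: on the algebra side, associativity and commutativity of the new product; the Leibniz rule for $D+\alpha$; and the admissibility relation~\meqref{eq:RSI1} for the pair $(D+\alpha, Q+\beta)$. First I would observe that the underlying binary operation is exactly the semidirect product of the commutative associative algebra $(A,\cdot)$ by the representation $(V,l_A)$; since $(A,\cdot,D,Q)$ being admissible forces $(A,\cdot)$ to be commutative and associative and $(V,l_A,\alpha)$ is a representation of $(A,\cdot,D)$, the classical fact (the $Q=0$, $\beta=0$ case already handled in~\mcite{LLB}) gives that $(A\oplus V,\cdot, D+\alpha)$ is a commutative differential algebra. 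So the genuinely new content is only the admissibility condition involving $\beta$.

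Next I would verify Eq.~\meqref{eq:RSI1} for $(A\oplus V, \cdot, D+\alpha, Q+\beta)$ by expanding both sides on a general pair $a+u$, $b+v$. The $A$-component of $(Q+\beta)((a+u)\cdot(b+v))$ is $Q(a\cdot b)$, which equals $Q(a)\cdot b - a\cdot D(b)$ by admissibility of $(A,\cdot,D,Q)$; this matches the $A$-component of $(Q+\beta)(a+u)\cdot(b+v) - (a+u)\cdot (D+\alpha)(b+v)$. The $V$-component reduces, after cancelling the $l_A$-terms that are already controlled by the representation axioms for $(A,\cdot,D)$ and by the definition of the action on the semidirect product, precisely to the two displayed defining identities for $\beta$, namely $\beta(l_A(a)v) = l_A(Q(a))v - l_A(a)\alpha(v)$ (coming from the terms where the first slot is in $A$ and the second in $V$) and $\beta(l_A(b)u) = l_A(b)\beta(u) - l_A(D(b))u$ (coming from the terms with the roles reversed, using commutativity). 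So the two halves of the definition of a representation of $(A,\cdot,D,Q)$ correspond exactly to the two ways $V$ can enter a product in $A\oplus V$, and the equivalence becomes transparent.

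For the converse direction, I would simply restrict: assuming $(A\oplus V, \cdot, D+\alpha, Q+\beta)$ is admissible, specialize the already-verified identities to $a+0$, $0+v$ and to $0+u$, $b+0$ to recover the two $\beta$-identities, and note that $(V,l_A,\alpha)$ being a representation of $(A,\cdot,D)$ follows from the known $Q=0$ statement (or from projecting the Leibniz rule for $D+\alpha$). I do not expect any real obstacle here; the proof is a bookkeeping exercise in separating the four bilinear "sectors" ($A\times A$, $A\times V$, $V\times A$, $V\times V$, the last being zero), and the main thing to get right is the consistent use of commutativity of $\cdot$ to pass between the two forms of the $\beta$-identity (exactly as the Example after Definition~\mref{de:addiff} uses commutativity to rewrite $Q(a\cdot b)$). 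I would therefore present the proof as: recall the differential-algebra part from~\mcite{LLB}; expand Eq.~\meqref{eq:RSI1} sector by sector; match the $V$-sectors to the defining identities for $\beta$; and conclude the equivalence.
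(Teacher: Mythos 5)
Your proposal is correct and is exactly the direct verification the paper has in mind (the paper's own proof is just ``It is straightforward''): the $A\otimes A$ sector of the admissibility identity for $Q+\beta$ reduces to admissibility of $(A,\cdot,D,Q)$, the $A\otimes V$ and $V\otimes A$ sectors reduce to the two halves of the defining chain of identities for $\beta$, and the differential-algebra part is the known $\beta=0$ statement. Your sector-by-sector bookkeeping, including the specializations $a+0,\,0+v$ and $0+u,\,b+0$ for the converse, checks out.
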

\begin{proof}
It is straightforward.
\end{proof}

\begin{pro}\mlabel{adm-reprent}
Let $(A, \cdot, D, Q)$ be an admissible commutative differential
algebra, $(V, l_A, \alpha, \beta)$ be a representation of $(A,
\cdot, D, Q)$ and $q\in {\bf k}$. Let $(A, \circ_q)$ be the
Novikov algebra induced from $(A, \cdot, D, Q)$. Define linear
maps $l_{l_A,q}$, $r_{l_A,q}: A\mapsto
\End_{\bf k}(V)$ by 
\begin{eqnarray}
l_{l_A,q}(a)v:=l_A(a)(\alpha+q\beta)v,\;\;
r_{l_A,q}(a)v:=l_A((D+qQ)a)v,\;\;a\in A, v\in V.
\end{eqnarray}
Then $(V, l_{l_A,q}, r_{l_A,q})$ is a representation of $(A, \circ_q)$.
\end{pro}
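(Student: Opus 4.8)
The cleanest route is to exploit the construction already established: by Proposition~\mref{pro:semi-ACDA}, $(V,l_A,\alpha,\beta)$ is a representation of $(A,\cdot,D,Q)$ if and only if $(A\oplus V,\cdot,D+\alpha,Q+\beta)$ is an admissible commutative differential algebra, where $V$ is given the zero multiplication and $l_A$ acts as the $A$-module structure. Then by Proposition~\mref{constr2}, the quadruple $(A\oplus V,\cdot,D+\alpha,Q+\beta)$ induces a Novikov algebra with product
$$(a+u)\circ_q (b+v)=(a+u)\cdot\big((D+\alpha)+q(Q+\beta)\big)(b+v)=a\cdot(D+qQ)b+l_A(a)(\alpha+q\beta)v+l_A\big((D+qQ)b\big)u.$$
Comparing with the semidirect product formula in Proposition~\mref{pro:semi}, this is precisely $A\ltimes_{l_{l_A,q},r_{l_A,q}}V$ with $l_{l_A,q}(a)v=l_A(a)(\alpha+q\beta)v$ and $r_{l_A,q}(a)v=l_A((D+qQ)a)v$. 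Since the induced structure on $A\oplus V$ is a Novikov algebra, Proposition~\mref{pro:semi} (the ``only if'' direction) gives that $(V,l_{l_A,q},r_{l_A,q})$ is a representation of $(A,\circ_q)$.

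**Steps, in order.** First I would note that the definition of a representation $(V,l_A,\alpha,\beta)$ of $(A,\cdot,D,Q)$ is exactly the condition needed so that the semidirect product $A\oplus V$ (with zero product on $V$) is $Q+\beta$-admissible over the commutative differential algebra $(A\oplus V,\cdot,D+\alpha)$; this is the content of Proposition~\mref{pro:semi-ACDA}, which I may cite. Second, apply Proposition~\mref{constr2} to $(A\oplus V,\cdot,D+\alpha,Q+\beta)$ to obtain the Novikov algebra $(A\oplus V,\circ_q)$, and expand the product $\circ_q$ using $u\cdot v = 0$ for $u,v\in V$ and $a\cdot v = v\cdot a = l_A(a)v$. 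Third, read off that this product has exactly the form $(a+u)\circ_q(b+v)=a\circ_q b + l_{l_A,q}(a)v + r_{l_A,q}(b)u$ with the stated $l_{l_A,q},r_{l_A,q}$; in particular the restriction of $\circ_q$ to $A$ is the induced Novikov product on $A$. Fourth, invoke the ``only if'' half of Proposition~\mref{pro:semi}: since $(A\oplus V,\circ_q)$ is a Novikov algebra whose product has the semidirect-product shape over $(A,\circ_q)$, the pair $(l_{l_A,q},r_{l_A,q})$ is a representation of $(A,\circ_q)$.

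**Main obstacle.** There is no serious obstacle; the only care needed is bookkeeping in the second and third steps—checking that expanding $(a+u)\cdot((D+\alpha)+q(Q+\beta))(b+v)$ indeed produces no spurious $V$-valued terms (it does not, because $u\cdot v=0$ and $(D+\alpha)$, $(Q+\beta)$ preserve the decomposition $A\oplus V$), and that the cross terms match $l_{l_A,q}$ and $r_{l_A,q}$ on the nose. Alternatively, if one prefers a self-contained argument, one can verify Eqs.~\meqref{lef-mod1}--\meqref{Nov-mod2} directly from the three defining identities of a representation of $(A,\cdot,D,Q)$ together with associativity and commutativity of $\cdot$ and admissibility Eq.~\meqref{eq:RSI1}; but this is a longer computation and the semidirect-product route is preferable since it reuses Propositions~\mref{constr2}, \mref{pro:semi}, and \mref{pro:semi-ACDA} that are already in place.
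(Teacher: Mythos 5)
Your proof is correct, but it takes a genuinely different route from the paper's. The paper proves Proposition~\mref{adm-reprent} by a direct verification of the representation axioms: it expands Eq.~\meqref{lef-mod1} for $l_{l_A,q}$ and $r_{l_A,q}$ using the defining identities of a representation of $(A,\cdot,D,Q)$ together with commutativity, and observes that the terms cancel (the remaining axioms \meqref{lef-mod2}--\meqref{Nov-mod2} are said to be similarly checked). You instead transfer the whole question to the semidirect product: Proposition~\mref{pro:semi-ACDA} gives the admissible commutative differential algebra $(A\oplus V,\cdot,D+\alpha,Q+\beta)$, Proposition~\mref{constr2} makes it a Novikov algebra, a short expansion identifies that Novikov product as having the semidirect-product shape with exactly $l_{l_A,q}$ and $r_{l_A,q}$ as the cross terms, and the ``only if'' half of Proposition~\mref{pro:semi} then yields the representation property. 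This is essentially the content of the paper's own Proposition~\ref{pro:semi-direct}, which the paper states \emph{after} Proposition~\mref{adm-reprent} as an identification of two already-defined structures; you have reversed the logical order, deriving the representation statement from the identification. There is no circularity in doing so, since the identification of the two products is a pure formula comparison and the ``only if'' direction of Proposition~\mref{pro:semi} does not presuppose the representation property. Your route buys a shorter, more conceptual argument that reuses established machinery; the paper's direct computation has the minor advantage of keeping Proposition~\mref{adm-reprent} independent of the semidirect-product propositions and of exhibiting explicitly where each axiom of the admissible representation is used.
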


\begin{proof}
Let $a$, $b\in A$, $v\in V$ and $u=(\alpha+q\beta)v$. Then we verify Eq.~\meqref{lef-mod1} as follows.
{\small
\begin{eqnarray*}
&&l_{l_A,q}(a\circ_q b-b\circ_q a)v-l_{l_A,q}(a)l_{l_A,q}(b)v+l_{l_A,q}(b)l_{l_A,q}(a)v\\
&&\quad=l_A(a\cdot (D+qQ)b-b\cdot (D+qQ)a)((\alpha+q\beta)v)-l_A(a)((\alpha+q\beta)(l_A(b)(\alpha+q\beta)v))\\
&&\qquad+l_A(b)((\alpha+q\beta)(l_A(a)(\alpha+q\beta)v))\\
&&\quad=l_A(a\cdot (D+qQ)b-b\cdot (D+qQ)a)u-l_A(a)(\alpha(l_A(b)u))-ql_A(a)\beta(l_A(b)u)\\
&&\qquad+l_A(b)\alpha(l_A(a)u)+ql_A(b)\beta(l_A(a)u)\\
&&\quad=l_A(a\cdot (D+qQ)b-b\cdot (D+qQ)a)u-ql_A(a)(l_A(b)\beta(u)-l_A(D(b))u)-l_A(a)(l_A(b)\alpha(u)+l_A(D(b))u)\\
&&\qquad+ql_A(b)(l_A(a)\beta(u)-l_A(D(a))u)+l_A(b)(l_A(D(a))u+l_A(a)\alpha(u))\\
&&\quad=l_A(a\cdot D(b))u+ql_A(a\cdot Q(b))u-l_A(b\cdot D(a))u-ql_A(b\cdot Q(a))u-l_A(a)l_A(D(b))u\\
&&\qquad-l_A(a)l_A(b)\alpha(u)-ql_A(a)l_A(b)\beta(u)+ql_A(a)l_A(D(b))u+l_A(b)l_A(D(a))u+l_A(b)l_A(a)\alpha(u)\\
&&\qquad+ql_A(b)l_A(a)\beta(u)-ql_A(b)l_A(D(a))u\\
&&\quad=ql_A(a\cdot Q(b)-b\cdot Q(a)+a\cdot D(b)-b\cdot D(a))u\\
&&\quad= 0.
\end{eqnarray*}
}
Eqs.~\meqref{lef-mod2}--\meqref{Nov-mod2} can be similarly checked, completing the proof.
\end{proof}

\begin{pro}\label{pro:semi-direct}
Let $(A, \cdot, D, Q)$ be an admissible commutative differential
algebra, $(V, l_A, \alpha, \beta)$ be a representation of $(A,
\cdot, D, Q)$ and $q\in {\bf k}$. Then the Novikov algebra induced
from the admissible commutative differential algebra
$(A\ltimes_{l_A} V, \cdot, D+\alpha, Q+\beta)$ is exactly the semi-direct product
$A\ltimes_{l_{l_A, q},r_{l_A,q}}V$ of $(A,
\circ_q)$ by $(V, l_{l_A,q}, r_{l_A,q})$.
\end{pro}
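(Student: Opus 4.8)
The plan is to unwind both sides of the claimed identity using the explicit formulas at our disposal and to check they agree on pure tensors. On the one hand, the Novikov algebra induced from $(A\ltimes_{l_A} V, \cdot, D+\alpha, Q+\beta)$ is, by Proposition~\ref{constr2}, the vector space $A\oplus V$ with product $x \circ_q y = x\cdot_{\!A\ltimes_{l_A}V}\,((D+\alpha)+q(Q+\beta))(y)$. On the other hand, the semi-direct product $A\ltimes_{l_{l_A,q},r_{l_A,q}}V$ is, by Proposition~\ref{pro:semi}, the vector space $A\oplus V$ with product $(a+u)\circ_q(b+v) = a\circ_q b + l_{l_A,q}(a)v + r_{l_A,q}(b)u$, where $l_{l_A,q}(a)v = l_A(a)(\alpha+q\beta)(v)$ and $r_{l_A,q}(b)u = l_A((D+qQ)b)u$ as defined in Proposition~\ref{adm-reprent}. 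So the two Novikov algebras have the same underlying vector space $A\oplus V$, and the task reduces to showing the two products coincide.

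First I would write out the product of the first algebra on an element $(a+u)\in A\oplus V$ with $(b+v)\in A\oplus V$. By definition of the admissible differential algebra structure on the semi-direct product from Proposition~\ref{pro:semi-ACDA}, the multiplication is $(a+u)\cdot(b+v) = a\cdot b + l_A(a)v + l_A(b)u$ and the two operators act diagonally as $D+\alpha$ and $Q+\beta$. Hence $((D+\alpha)+q(Q+\beta))(b+v) = (D+qQ)(b) + (\alpha+q\beta)(v)$. Plugging in and expanding,
\begin{equation*}
(a+u)\circ_q(b+v) = a\cdot\big((D+qQ)(b)\big) + l_A(a)\big((\alpha+q\beta)(v)\big) + l_A\big((D+qQ)(b)\big)(u).
\end{equation*}
Next I would compare this termwise with the semi-direct product formula: the $A$-component $a\cdot(D+qQ)(b)$ is exactly $a\circ_q b$ in $(A,\circ_q)$; the term $l_A(a)(\alpha+q\beta)(v)$ is exactly $l_{l_A,q}(a)v$; and the term $l_A((D+qQ)(b))u$ is exactly $r_{l_A,q}(b)u$. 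Thus the two products agree on all pairs, and the Novikov algebras are identical.

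**Main obstacle.** There is essentially no deep obstacle here — the statement is a bookkeeping identity matching the diagonal action of $D+\alpha$ and $Q+\beta$ on $A\oplus V$ against the definitions of $l_{l_A,q}$ and $r_{l_A,q}$. The only point requiring a little care is making sure the multiplication on $A\ltimes_{l_A}V$ is the \emph{one-sided} (commutative) semi-direct product used in Proposition~\ref{pro:semi-ACDA}, namely $(a+u)\cdot(b+v) = a\cdot b + l_A(a)v + l_A(b)u$ with a single module action $l_A$, rather than a two-sided one; once this is fixed, the cross-terms split cleanly into the $l_{l_A,q}$ and $r_{l_A,q}$ pieces as above, and the proof is immediate. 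I would therefore simply say "the assertion follows by comparing the two products on $A\oplus V$ using Propositions~\ref{constr2}, \ref{pro:semi}, \ref{pro:semi-ACDA} and \ref{adm-reprent}" and, if desired, include the one-line computation of $(a+u)\circ_q(b+v)$ displayed above.
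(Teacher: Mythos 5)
Your computation is correct and is precisely the "straightforward" verification that the paper omits (its proof of this proposition is just "It is straightforward."): both products on $A\oplus V$ expand to $a\cdot(D+qQ)(b)+l_A(a)(\alpha+q\beta)(v)+l_A((D+qQ)(b))(u)$. No issues.
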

\begin{proof}
It is straightforward.
\end{proof}
For a linear map $\varphi:A\rightarrow\mathrm{End}_{
\bf k}(V)$, define a linear map
$\varphi^{*}:A\rightarrow\mathrm{End}_{\bf k}(V^{*})$ by
$$\langle\varphi^{*}(a)f,v\rangle=-\langle f,\varphi(a)v\rangle,\;\;\; a\in A, f\in V^{*},v\in V,$$
where $\langle\cdot, \cdot\rangle$ is the usual pairing between $V$ and $V^*$.

\begin{pro}\mlabel{ad-dual-rep}
Let $(A, \cdot, D, Q)$ be an admissible commutative differential algebra and $(V, l_A, \alpha, \beta)$ be a representation of $(A, \cdot, D, Q)$. Then $(V^\ast, -l_A^\ast, \beta^\ast, \alpha^\ast)$ is a representation of $(A, \cdot, D, Q)$.
\end{pro}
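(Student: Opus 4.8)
The plan is to verify directly the three defining conditions for $(V^\ast, -l_A^\ast, \beta^\ast, \alpha^\ast)$ to be a representation of $(A,\cdot,D,Q)$, reducing each to an identity already available for $(V, l_A, \alpha, \beta)$ by pairing against an arbitrary $v\in V$. Throughout I would abbreviate $L:=-l_A^\ast$; unwinding the two transpose conventions fixed in this section --- the one with a sign for maps $A\to\End_{\bf k}(V)$, the one without a sign for maps $V\to V$ --- one gets $\langle L(a)f,v\rangle=\langle f,l_A(a)v\rangle$, $\langle\alpha^\ast(g),v\rangle=\langle g,\alpha(v)\rangle$ and $\langle\beta^\ast(g),v\rangle=\langle g,\beta(v)\rangle$ for all $a\in A$, $f,g\in V^\ast$, $v\in V$. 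The only real friction here is keeping the signs straight, precisely because these two conventions coexist; there is no conceptual obstacle, and each step collapses to one line after pairing.

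\textbf{Step 1.} First I would check that $(V^\ast, L)$ is a representation of the commutative associative algebra $(A,\cdot)$. Indeed $\langle L(a\cdot b)f,v\rangle=\langle f,l_A(a)l_A(b)v\rangle$, while $\langle L(a)L(b)f,v\rangle=\langle L(b)f,l_A(a)v\rangle=\langle f,l_A(b)l_A(a)v\rangle$, and the two agree since $l_A(b)l_A(a)=l_A(b\cdot a)=l_A(a\cdot b)=l_A(a)l_A(b)$ by commutativity. \textbf{Step 2.} Next I would show that $\beta^\ast$ satisfies the analogue of \eqref{diff-module} for $(V^\ast, L)$ and $D$, i.e. $\beta^\ast(L(a)f)=L(D(a))f+L(a)\beta^\ast(f)$. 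Pairing the left side with $v$ yields $\langle f,l_A(a)\beta(v)\rangle$ and pairing the right side yields $\langle f,\,l_A(D(a))v+\beta(l_A(a)v)\rangle$; equality for all $f$ is exactly the identity $\beta(l_A(a)v)=l_A(a)\beta(v)-l_A(D(a))v$, which is the first form of the $\beta$-axiom in the definition of a representation of $(A,\cdot,D,Q)$. So $(V^\ast, L, \beta^\ast)$ is a representation of $(A,\cdot,D)$.

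\textbf{Step 3.} Finally I would verify that $\alpha^\ast$ plays the role of the operator "$\beta$" relative to $(V^\ast, L, \beta^\ast)$, namely $\alpha^\ast(L(a)f)=L(a)\alpha^\ast(f)-L(D(a))f=L(Q(a))f-L(a)\beta^\ast(f)$. Pairing the first equality with $v$ turns it into $\alpha(l_A(a)v)=l_A(D(a))v+l_A(a)\alpha(v)$, which is precisely \eqref{diff-module} for $(V, l_A, \alpha)$. Pairing the second equality with $v$ turns it into $\alpha(l_A(a)v)+\beta(l_A(a)v)=l_A(D(a))v+l_A(Q(a))v$, which follows by adding \eqref{diff-module} for $\alpha$ to the second form $\beta(l_A(a)v)=l_A(Q(a))v-l_A(a)\alpha(v)$ of the $\beta$-axiom. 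Combining Steps 1--3 shows $(V^\ast, -l_A^\ast, \beta^\ast, \alpha^\ast)$ is a representation of $(A,\cdot,D,Q)$.

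An alternative, slightly less hands-on route would be to dualize the semi-direct product of Proposition \ref{pro:semi-ACDA}: if one already knew that the dual of an admissible commutative differential algebra on $A\ltimes_{l_A}V$ is again admissible and restricts appropriately to $A$ and $V^\ast$, the statement would drop out. But since that would require extra structure not isolated in the excerpt, I expect the direct pairing computation above --- whose only delicate point is the sign in $-l_A^\ast$ versus the sign-free $\alpha^\ast,\beta^\ast$ --- to be the cleanest proof.
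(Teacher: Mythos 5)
Your proof is correct and follows essentially the same route as the paper: a direct verification of the dual axioms by pairing against $v\in V$, with careful attention to the sign convention in $-l_A^\ast$ versus the sign-free $\alpha^\ast,\beta^\ast$. The only difference is that the paper outsources your Steps 1--2 (that $(V^\ast,-l_A^\ast,\beta^\ast)$ is a representation of $(A,\cdot,D)$) to a cited lemma from \cite{LLB} and only writes out the two pairings of your Step 3, so your version is simply more self-contained.
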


\begin{proof}
By \cite[Lemma 2.7]{LLB}, $(V^\ast, -l_A^\ast, \beta^\ast)$ is a representation of $(A, \cdot, D)$.
Let $a\in A$, $f\in V^\ast$ and $v\in V$. Then we have
\begin{eqnarray*}
&&\langle \alpha^\ast((-l_A^\ast)(a)f)+l_A^\ast(a)\alpha^\ast(f)-l_A^\ast(D(a))f, v\rangle
=\langle f, l_A(a) \alpha(v)-\alpha(l_A(a)v)+l_A(D(a))v\rangle=0,\\
&&\langle \alpha^\ast((-l_A^\ast)(a)f)+l_A^\ast(Q(a))f-l_A^\ast(a)\beta^\ast(f), v\rangle=\langle f, l_A(a) \alpha(v)-l_A(Q(a))v+\beta(l_A(a)v)\rangle=0.
\end{eqnarray*}
Hence the conclusion follows.
\end{proof}

Note that $(A, L_\cdot, D, Q)$ is a representation of $(A, \cdot, D, Q)$. Then we have the following conclusion.
\begin{cor}\mlabel{admi-rep}
If $(A, \cdot, D, Q)$ is an admissible commutative differential
algebra, then $(A^\ast, -L_{\cdot}^\ast, Q^\ast, D^\ast)$ is a representation of $(A, \cdot, D, Q)$.
\end{cor}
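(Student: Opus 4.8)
The plan is to derive Corollary~\ref{admi-rep} directly from Proposition~\ref{ad-dual-rep} by choosing the right representation to dualize. The key observation is that the note preceding the corollary already tells us that $(A, L_\cdot, D, Q)$ is a representation of the admissible commutative differential algebra $(A,\cdot,D,Q)$; indeed, $(A,L_\cdot)$ is the (left) regular representation of the commutative associative algebra $(A,\cdot)$, the compatibility $D(L_\cdot(a)b)=D(a\cdot b)=D(a)\cdot b+a\cdot D(b)=L_\cdot(D(a))b+L_\cdot(a)D(b)$ says $(A,L_\cdot,D)$ is a representation of $(A,\cdot,D)$ in the sense of Definition~\ref{Def:mo}, and the admissibility condition~\eqref{eq:RSI1} rewritten as $Q(a\cdot b)=Q(a)\cdot b-a\cdot D(b)$ together with the commutative version $Q(a\cdot b)=a\cdot Q(b)-D(a)\cdot b$ is precisely the statement that $D(L_\cdot(a)b)=\cdots$ and $Q$ plays the role of $\beta$ in the definition of a representation of $(A,\cdot,D,Q)$, i.e.\ $L_\cdot(a)Q(b)-L_\cdot(D(a))b = Q(L_\cdot(a)b) = L_\cdot(Q(a))b - L_\cdot(a)D(b)$.

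Granting that, the proof is a one-line application: apply Proposition~\ref{ad-dual-rep} to the representation $(V,l_A,\alpha,\beta) = (A, L_\cdot, D, Q)$. The conclusion of that proposition yields that $(V^\ast, -l_A^\ast, \beta^\ast, \alpha^\ast) = (A^\ast, -L_\cdot^\ast, Q^\ast, D^\ast)$ is a representation of $(A,\cdot,D,Q)$, which is exactly the statement of the corollary. So the proof I would write is simply: ``Note that $(A, L_\cdot, D, Q)$ is a representation of $(A,\cdot,D,Q)$. Applying Proposition~\ref{ad-dual-rep} to this representation gives that $(A^\ast, -L_\cdot^\ast, Q^\ast, D^\ast)$ is a representation of $(A,\cdot,D,Q)$.''

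The only genuine content to verify — and the step I would expect to be the ``main obstacle,'' though it is really routine — is confirming that $(A,L_\cdot,D,Q)$ does satisfy the axioms of a representation of an admissible commutative differential algebra, i.e.\ that the pair of identities $\beta(l_A(a)v)=l_A(a)\beta(v)-l_A(D(a))v$ and $\beta(l_A(a)v)=l_A(Q(a))v-l_A(a)\alpha(v)$ hold with $l_A=L_\cdot$, $\alpha=D$, $\beta=Q$. Both reduce, upon applying everything to an element $b\in A$, to the two forms of the admissibility condition: $Q(a\cdot b) = Q(a)\cdot b - a\cdot D(b)$ (which is~\eqref{eq:RSI1}) and $Q(a\cdot b)=a\cdot Q(b)-D(a)\cdot b$ (its commutative counterpart, noted in the example after Definition~\ref{de:addiff}). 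One also needs that $(A,L_\cdot,D)$ is a representation of $(A,\cdot,D)$, which is immediate from the Leibniz rule, and this is already recorded in~\cite{LLB}. Hence the corollary follows, and no further computation is needed beyond citing these facts.
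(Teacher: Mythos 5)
Your proposal is correct and follows exactly the paper's route: the paper states immediately before the corollary that $(A, L_{\cdot}, D, Q)$ is a representation of $(A,\cdot,D,Q)$ and then deduces the corollary by applying Proposition~\mref{ad-dual-rep} to that representation, which is precisely what you do. Your additional verification that $(A,L_{\cdot},D,Q)$ satisfies the representation axioms (via the Leibniz rule and the two forms of the admissibility condition) is accurate and fills in the detail the paper leaves implicit.
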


\begin{pro} \cite[Proposition 3.3]{HBG}\mlabel{pp:dualrep}
Let $(A,\circ)$ be a Novikov algebra and $(V, l_A, r_A)$ be a
representation of $(A,\circ)$. Then $(V^\ast, l_A^\ast+r_A^\ast, -r_A^\ast)$ is
a representation of $(A,\circ)$.
\end{pro}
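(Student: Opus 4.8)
The plan is to verify directly the four representation axioms \eqref{lef-mod1}--\eqref{Nov-mod2} for the triple $(V^\ast,L,R)$, where $L:=l_A^\ast+r_A^\ast$ and $R:=-r_A^\ast$. For each identity I would pair both sides against arbitrary $f\in V^\ast$ and $v\in V$, move every operator back to the $V$-side using the defining relation $\langle\varphi^\ast(a)g,v\rangle=-\langle g,\varphi(a)v\rangle$, and then recognize the resulting operator identity on $V$ as a consequence of \eqref{lef-mod1}--\eqref{Nov-mod2} for $(V,l_A,r_A)$. Throughout I abbreviate $l:=l_A$, $r:=r_A$, set $\rho:=l+r$, and use repeatedly the two basic pairings $\langle L(c)g,v\rangle=-\langle g,\rho(c)v\rangle$ and $\langle R(c)g,v\rangle=\langle g,r(c)v\rangle$, valid for all $c\in A$, $g\in V^\ast$, $v\in V$.

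Three of the four axioms are short. Axiom \eqref{Nov-mod2} for $(V^\ast,L,R)$, namely $R(a)R(b)=R(b)R(a)$, transposes to $r(b)r(a)=r(a)r(b)$ on $V$, which is \eqref{Nov-mod2}. Axiom \eqref{Nov-mod1}, namely $L(a\circ b)=R(b)L(a)$, transposes to the operator identity $\rho(a\circ b)=\rho(a)r(b)$ on $V$: starting from $\rho(a)r(b)=l(a)r(b)+r(a)r(b)$, one rewrites $l(a)r(b)$ by \eqref{lef-mod2}, cancels $r(a)r(b)-r(b)r(a)$ by \eqref{Nov-mod2}, and turns $r(b)l(a)$ into $l(a\circ b)$ by \eqref{Nov-mod1}, arriving at $l(a\circ b)+r(a\circ b)=\rho(a\circ b)$. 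Axiom \eqref{lef-mod2}, namely $L(a)R(b)-R(b)L(a)=R(a\circ b)-R(b)R(a)$, transposes to $\rho(a)r(b)-r(b)\rho(a)=r(a\circ b)-r(a)r(b)$ on $V$; expanding the left side as $\bigl(l(a)r(b)-r(b)l(a)\bigr)+\bigl(r(a)r(b)-r(b)r(a)\bigr)$ and applying \eqref{lef-mod2} and \eqref{Nov-mod2} yields $r(a\circ b)-r(b)r(a)$, which agrees with the right side by one further use of \eqref{Nov-mod2}.

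The main computation is axiom \eqref{lef-mod1}, namely $L(a\circ b-b\circ a)=L(a)L(b)-L(b)L(a)$, which transposes to the single operator identity $\rho(a\circ b-b\circ a)=\rho(a)\rho(b)-\rho(b)\rho(a)$ on $V$. I would expand the commutator $\rho(a)\rho(b)-\rho(b)\rho(a)$ into its four parts $l(a)l(b)-l(b)l(a)$, $l(a)r(b)-r(b)l(a)$, $r(a)l(b)-l(b)r(a)$, and $r(a)r(b)-r(b)r(a)$, then apply \eqref{lef-mod1} to the first, \eqref{lef-mod2} to the second, \eqref{lef-mod2} with $a$ and $b$ interchanged to the third, and \eqref{Nov-mod2} to the fourth; the $r(a)r(b)$ and $r(b)r(a)$ cross-terms produced along the way cancel against one another by \eqref{Nov-mod2}, leaving exactly $l(a\circ b-b\circ a)+r(a\circ b-b\circ a)=\rho(a\circ b-b\circ a)$. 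The only delicate point is this last bookkeeping: keeping the sign straight when \eqref{lef-mod2} is used with the two arguments swapped, and observing that the cross-terms annihilate. This is the step I expect to demand the most attention, though nothing in it is conceptually hard; assembling the four verifications completes the proof.
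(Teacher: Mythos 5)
Your verification is correct: all four dualized axioms reduce, under the pairing conventions $\langle L(c)g,v\rangle=-\langle g,(l+r)(c)v\rangle$ and $\langle R(c)g,v\rangle=\langle g,r(c)v\rangle$, to exactly the operator identities you state, and each of those follows from \eqref{lef-mod1}--\eqref{Nov-mod2} as you describe (including the sign in the swapped use of \eqref{lef-mod2} and the cancellation of the $r(a)r(b)$, $r(b)r(a)$ cross-terms via \eqref{Nov-mod2}). The paper itself gives no proof here, only a citation to [HBG, Proposition 3.3], and your direct transposition-and-verification argument is the standard one for this statement.
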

\begin{rmk}
Since $(A, L_\circ, R_\circ)$ is a representation of $(A, \circ)$, $(A^\ast, L_\circ^\ast+R_\circ^\ast, -R_\circ^\ast)=(A^\ast, L_\star^\ast, -R_\circ^\ast)$ is a representation of $(A,\circ)$.
\end{rmk}

We recall the notion of an $\mathcal O$-operator on a Novikov algebra associated to a representation.

\begin{defi}\mcite{HBG}
Let $(A,\circ)$ be a Novikov algebra and $(V, l_A, r_A)$ be a
representation. A linear map $T: V\rightarrow A$ is called an {\bf
$\mathcal{O}$-operator} (also called a {\bf relative Rota-Baxter operator}) on $(A,\circ)$ associated to $(V, l_A,
r_A)$, if $T$ satisfies
\begin{eqnarray*}
T(u)\circ T(v)=T(l_A(T(u))v)+T(r_A(T(v))u),\;\;\;\;u,~~v\in
V.
\end{eqnarray*}
\end{defi}

Let $A$ and $V$ be vector spaces. Then any element  $r\in
A\ot A$ is identified as a linear map $T^r: A^*\rightarrow A$
under the linear bijection $A\otimes A \cong \text{Hom}_{{\bf
k}}(A^{\ast}, A)$ and any linear map $T:V\rightarrow A$ is
identified as $r_T\in A\otimes V^\ast \subseteq (A\oplus
V^*)\otimes (A\oplus V^*)$ through the linear bijection
 ${\rm Hom}_{\bf k}(V, A)\cong A\otimes V^\ast$.
With this identification, the relationship between antisymmetric solutions of NYBE and $\mathcal{O}$-operators
is given as follows.

\begin{pro}\mlabel{Nov-o-operator}
\begin{enumerate}
  \item \cite[Theorem 3.27]{HBG}
Let $(A, \circ)$ be a Novikov algebra and $r\in A\otimes A$ be
antisymmetric.  Then $r$ is a solution of the
NYBE in $(A,\circ)$ if and only if $T^r$ is an $\mathcal
O$-operator on $(A,\circ)$ associated to the representation
$(A^*,L_{\star}^*,-R_{\circ}^*)$. \item \cite[Theorem 3.29]{HBG}
 Let $(A, \circ)$ be a Novikov algebra, $(V, l_A, r_A)$ be a representation of $(A, \circ)$,
and $T: V\rightarrow A$ be a linear map. Then $r=r_T-\tau r_T$ is
a solution of the NYBE in the semi-direct product Novikov algebra
$A\ltimes_{l_A^\ast+r_A^\ast,-r_A^\ast} V^\ast$ if and only if $T$
is an $\mathcal{O}$-operator on $(A,\circ)$ associated to $(V,
l_A, r_A)$.
\end{enumerate}
 \end{pro}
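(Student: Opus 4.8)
Both items are quoted verbatim from \mcite{HBG} (Theorems~3.27 and~3.29 there), so strictly speaking no argument is needed beyond those references; what follows is a sketch of how one would recover them in the present notation.

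For part~(1), fix an antisymmetric $r=\sum_i x_i\otimes y_i\in A\otimes A$, so that the associated map $T^r: A^\ast\to A$ is $T^r(f)=\sum_i\langle f,x_i\rangle\,y_i$. The plan is to compute the element $r\bullet r=r_{13}\circ r_{23}+r_{12}\star r_{23}+r_{13}\circ r_{12}\in A^{\otimes 3}$ and pair its first two tensor legs against an arbitrary $f\otimes g\in A^\ast\otimes A^\ast$; using the antisymmetry $\sum_i x_i\otimes y_i=-\sum_i y_i\otimes x_i$ and the definitions of $L_\star^\ast$, $R_\circ^\ast$ and $a\star b=a\circ b+b\circ a$, each of the three summands of $r\bullet r$ turns into one of the three terms of
$$T^r(f)\circ T^r(g)-T^r\big(L_\star^\ast(T^r(f))g\big)-T^r\big(-R_\circ^\ast(T^r(g))f\big)$$
regarded as an element of $A$. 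Since the pairing is nondegenerate, $r\bullet r=0$ is then equivalent to the vanishing of this expression for all $f,g\in A^\ast$, i.e.\ to $T^r$ being an $\mathcal O$-operator associated to $(A^\ast,L_\star^\ast,-R_\circ^\ast)$.

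For part~(2), first form the semi-direct product Novikov algebra $\widehat A:=A\ltimes_{l_A^\ast+r_A^\ast,-r_A^\ast}V^\ast$; this is indeed a Novikov algebra, since $(V^\ast,l_A^\ast+r_A^\ast,-r_A^\ast)$ is a representation of $(A,\circ)$ by Proposition~\mref{pp:dualrep} together with Proposition~\mref{pro:semi}. Under the identification ${\rm Hom}_{\bf k}(V,A)\cong A\otimes V^\ast\subseteq\widehat A\otimes\widehat A$, the tensor $r=r_T-\tau r_T$ is antisymmetric in $\widehat A$, so part~(1) applied to $\widehat A$ shows that $r$ solves the NYBE in $\widehat A$ if and only if the induced map $T^r:\widehat A^\ast=A^\ast\oplus V\to\widehat A$ is an $\mathcal O$-operator associated to $(\widehat A^\ast,L_\star^\ast,-R_\circ^\ast)$. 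One then writes this $\mathcal O$-operator identity componentwise with respect to the splitting $\widehat A^\ast=A^\ast\oplus V$: the map $T^r$ is assembled from $T$ on $V$ and its transpose $T^\ast$ on $A^\ast$, and, because the representation carried by $V^\ast$ inside $\widehat A$ is precisely the dual of $(V,l_A,r_A)$, the mixed components reduce to the adjoint relations defining $(V^\ast,l_A^\ast+r_A^\ast,-r_A^\ast)$ and the whole identity collapses exactly to the statement that $T: V\to A$ is an $\mathcal O$-operator on $(A,\circ)$ associated to $(V,l_A,r_A)$.

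The only genuinely delicate point is bookkeeping: fixing the sign and tensor-slot conventions in the identification $A\otimes A\cong{\rm Hom}_{\bf k}(A^\ast,A)$, in $L_\star^\ast$ and $R_\circ^\ast$, and in the splitting $\widehat A^\ast=A^\ast\oplus V$, so that the three terms of $r\bullet r$ line up with the three terms of the $\mathcal O$-operator identity with compatible signs. Once these are pinned down as in \mcite{HBG}, both equivalences are mechanical.
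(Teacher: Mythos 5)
The paper itself gives no proof of this proposition: it is recalled verbatim from \mcite{HBG} (Theorems 3.27 and 3.29 there), so your deferral to that reference already matches what the paper does. Your sketch of how to recover the two equivalences (pairing $r\bullet r$ against $f\otimes g$ for part (1), and applying part (1) in the semi-direct product $A\ltimes_{l_A^\ast+r_A^\ast,-r_A^\ast}V^\ast$ for part (2)) is the standard argument and is consistent with the cited source.
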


We also recall the notion of
$\calo$-operators on commutative differential algebras.

\begin{defi}\mcite{LLB}
Let $(A, \cdot, D)$ be a commutative differential algebra and $(V, l_A, \alpha)$ be a representation of $(A, \cdot, D)$. A linear map $T: V\rightarrow A$ is called an {\bf $\mathcal{O}$-operator on $(A, \cdot, D)$ associated to $(V, l_A, \alpha)$}, if
\begin{eqnarray}
&&DT=T\alpha,\\
&&T(u)\cdot T(v)=T(l_A(T(u))v+l_A(T(v))u),\;\;\;\; u, v\in V.
\end{eqnarray}
\end{defi}

Now we introduce the notion of an $\mathcal{O}$-operator on
an admissible commutative differential algebra associated to a
representation.

\begin{defi}
Let $(A, \cdot, D, Q)$ be an admissible commutative differential algebra and $(V, l_A, \alpha, \beta)$ be a representation of $(A,\cdot, D, Q)$. A linear map $T: V\rightarrow A$ is called an {\bf $\mathcal{O}$-operator on $(A, \cdot, D, Q)$ associated to $(V, l_A, \alpha, \beta)$}, if $T$ is an $\mathcal{O}$-operator on $(A, \cdot, D)$ associated to $(V, l_A, \alpha)$ and
\begin{eqnarray}
&& QT=T\beta.
\end{eqnarray}
\end{defi}

\begin{pro}\mlabel{Ass-operator1}  \cite[Corollary 4.15]{LLB}
Let $(A, \cdot, D, Q)$ be an admissible commutative differential
algebra and $r\in A\otimes A$ be antisymmetric. Then $r$ is a
solution of the admissible AYBE in $(A, \cdot, D, Q)$ if and only
if $T^r$ is an $ \mathcal{O}$-operator on $(A, \cdot, D)$
associated to the representation $(A^\ast, -L_{\cdot}^\ast,
Q^\ast)$.
\end{pro}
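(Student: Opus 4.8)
The plan is to split each side of the claimed equivalence into an ``associative part'' and a ``differential part'' and to match the parts separately. Write $r=\sum_i x_i\otimes y_i$, and use the identification $A\otimes A\cong\Hom_{{\bf k}}(A^*,A)$ under which $T^r(f)=\sum_i\langle f,x_i\rangle y_i$ for $f\in A^*$. Unwinding the definitions, $r$ is a solution of the admissible AYBE in $(A,\cdot,D,Q)$ if and only if (i) $r$ solves the AYBE in $(A,\cdot)$, and (ii) both $(D\otimes\id-\id\otimes Q)(r)=0$ and $(\id\otimes D-Q\otimes\id)(r)=0$ hold; whereas $T^r$ is an $\mathcal{O}$-operator on $(A,\cdot,D)$ associated to $(A^*,-L_\cdot^*,Q^*)$ if and only if (i$'$) $T^r$ is an $\mathcal{O}$-operator on $(A,\cdot)$ associated to $(A^*,-L_\cdot^*)$, and (ii$'$) $DT^r=T^rQ^*$. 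The first benefit is that (i)$\Leftrightarrow$(i$'$) is exactly the classical correspondence between antisymmetric solutions of the AYBE and $\mathcal{O}$-operators on a commutative associative algebra (see \mcite{Bai}); note that this presupposes $(A^*,-L_\cdot^*)$ is a representation of $(A,\cdot)$, and that its differential enhancement $(A^*,-L_\cdot^*,Q^*)$ is a representation of $(A,\cdot,D)$ precisely because $Q$ is admissible to $(A,\cdot,D)$.

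It remains to establish (ii)$\Leftrightarrow$(ii$'$). First I would compute, for $f\in A^*$, that $DT^r(f)=\sum_i\langle f,x_i\rangle D(y_i)$ while $T^rQ^*(f)=\sum_i\langle Q^*f,x_i\rangle y_i=\sum_i\langle f,Q(x_i)\rangle y_i$; hence $DT^r=T^rQ^*$ holds if and only if $\sum_i x_i\otimes D(y_i)=\sum_i Q(x_i)\otimes y_i$, that is, $(\id\otimes D-Q\otimes\id)(r)=0$, which is the second of the two tensor equations in (ii). To recover the first, apply the flip $\tau$ and invoke the antisymmetry $\tau r=-r$: since $\tau(D\otimes\id)(r)=(\id\otimes D)(\tau r)=-(\id\otimes D)(r)$ and likewise $\tau(\id\otimes Q)(r)=-(Q\otimes\id)(r)$, one gets $\tau\bigl((D\otimes\id-\id\otimes Q)(r)\bigr)=-\bigl((\id\otimes D-Q\otimes\id)(r)\bigr)$, so the vanishing of either tensor equation forces the vanishing of the other. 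Thus, for antisymmetric $r$, the single condition (ii$'$) is equivalent to the full condition (ii), and combining this with (i)$\Leftrightarrow$(i$'$) yields the proposition.

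I do not expect a genuine obstacle here: the substantive content is already carried by the classical associative case, and the only delicate bookkeeping is the choice of pairing convention used to pass between $r$ and $T^r$, which fixes the sign and determines which of the two tensor equations is the ``primitive'' one, together with the short antisymmetry argument that promotes one tensor equation to both. All computations are routine linear algebra.
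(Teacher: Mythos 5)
The paper does not prove this proposition at all -- it is quoted verbatim from \cite[Corollary 4.15]{LLB} -- so there is no internal argument to compare against; your proposal supplies a correct self-contained verification. The decomposition into an ``associative part'' and a ``differential part'' is exactly right: the quadratic $\mathcal{O}$-operator identity $T^r(f)\cdot T^r(g)=T^r\bigl((-L_\cdot^\ast)(T^r(f))g+(-L_\cdot^\ast)(T^r(g))f\bigr)$ is equivalent to the AYBE for antisymmetric $r$ (this is the commutative specialization of the correspondence in \mcite{Bai}, and it is recovered by pairing the AYBE with $f\otimes g\otimes\id$ and using $\sum_j\langle g,y_j\rangle x_j=-T^r(g)$), while $DT^r=T^rQ^\ast$ is equivalent to one of the two tensor constraints, the other following from antisymmetry exactly as you argue via $\tau\bigl((D\otimes\id-\id\otimes Q)(r)\bigr)=-\bigl((\id\otimes D-Q\otimes\id)(r)\bigr)$. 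Two small bookkeeping points, both of which you flag and both of which are harmless: the paper's convention is $\langle g,T^r(f)\rangle=\langle g\otimes f,r\rangle$ (pairing $f$ against the \emph{second} tensor factor), which differs from yours by the flip, hence by a global sign for antisymmetric $r$; since both the quadratic condition and the intertwining condition $DT=T\alpha$ are invariant under $T\mapsto -T$, this only swaps which of the two tensor equations is the ``primitive'' one. Your observation that $(A^\ast,-L_\cdot^\ast,Q^\ast)$ is a representation of $(A,\cdot,D)$ precisely by admissibility of $Q$ matches the remark following Definition~\mref{de:addiff}, and the antisymmetry manipulation you use is the same device the paper itself employs (via $(T^r)^\ast=-T^r$) in its proof of Corollary~\mref{s-O-oper}. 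No gaps.
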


In fact, Proposition~\ref{Ass-operator1} can be restated in terms of $\mathcal O$-operators on admissible commutative
differential algebras as follows.

\begin{cor}\mlabel{s-O-oper}
Let $(A, \cdot, D, Q)$ be an admissible commutative differential
algebra and $r\in A\otimes A$ be antisymmetric. Then $r$ is a
solution of the admissible AYBE in $(A, \cdot, D, Q)$ if and only
if $T^r$ is an $\mathcal{O}$-operator on $(A, \cdot, D, Q)$ associated to $(A^\ast, -L_{\cdot}^\ast, Q^\ast, D^\ast)$.
\end{cor}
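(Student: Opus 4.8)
The plan is to deduce the statement from Proposition~\ref{Ass-operator1} and Corollary~\ref{admi-rep}, the only remaining point being that the single extra defining equation of an $\mathcal{O}$-operator on an admissible commutative differential algebra is automatic for an antisymmetric solution of the admissible AYBE. First, by Corollary~\ref{admi-rep}, $(A^\ast, -L_{\cdot}^\ast, Q^\ast, D^\ast)$ is a representation of $(A, \cdot, D, Q)$, so the phrase ``$T^r$ is an $\mathcal{O}$-operator on $(A, \cdot, D, Q)$ associated to $(A^\ast, -L_{\cdot}^\ast, Q^\ast, D^\ast)$'' is meaningful, and by definition it amounts to the conjunction of (i) $T^r$ being an $\mathcal{O}$-operator on $(A, \cdot, D)$ associated to $(A^\ast, -L_{\cdot}^\ast, Q^\ast)$, and (ii) the identity $Q\,T^r = T^r D^\ast$.

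The $\Leftarrow$ direction is then immediate: (i) alone gives, via Proposition~\ref{Ass-operator1}, that $r$ is a solution of the admissible AYBE in $(A,\cdot, D, Q)$. For the $\Rightarrow$ direction, suppose $r$ is antisymmetric and solves the admissible AYBE. Proposition~\ref{Ass-operator1} furnishes (i), so it suffices to establish (ii). I would do this by unwinding the identification $A\ot A\cong\Hom_{\bf k}(A^\ast, A)$: writing $r = \sum_i x_i\ot y_i$ and taking, say, $T^r(f) = \sum_i \langle f, x_i\rangle\, y_i$ for $f\in A^\ast$ (the other choice of bijection only reshuffles by $\tau$, hence a sign, and does not affect the argument), one computes that $Q\,T^r(f)$ is the image under $f\ot\id$ of $(\id\ot Q)(r)$, while $T^r(D^\ast f)$ is the image under $f\ot\id$ of $(D\ot\id)(r)$. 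Hence (ii) is equivalent to $(D\ot\id - \id\ot Q)(r) = 0$, which is exactly one of the two ``admissibility'' equations in the definition of a solution of the admissible AYBE; in fact, since applying $\tau$ sends it, up to sign, to $(\id\ot D - Q\ot\id)(r) = 0$ (using $\tau r = -r$), and this last equation is already contained in (i) through the condition $D\,T^r = T^r Q^\ast$, the equation (ii) is even redundant once (i) is known and $r$ is antisymmetric.

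The only delicate point is the bookkeeping of the pairing conventions — the sign in the definition of $\varphi^\ast$ and the direction of the bijection $A\ot A\cong\Hom_{\bf k}(A^\ast,A)$ — so that the two module-map identities $D\,T^r = T^r Q^\ast$ and $Q\,T^r = T^r D^\ast$ are matched correctly with the two tensor equations $(\id\ot D - Q\ot\id)(r)=0$ and $(D\ot\id - \id\ot Q)(r)=0$ respectively. Beyond that there is no new computation relative to Proposition~\ref{Ass-operator1}, and the corollary follows.
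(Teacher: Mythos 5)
Your proposal is correct and follows essentially the same route as the paper: both directions reduce to Proposition~\ref{Ass-operator1} together with Corollary~\ref{admi-rep}, and the only extra condition $Q\,T^r = T^r D^\ast$ is obtained from the antisymmetry of $r$, which identifies it with the other admissibility equation (the paper phrases this via ${T^r}^\ast = -T^r$ and taking adjoints of $D\,T^r = T^r Q^\ast$; your tensor-side translation using $\tau r = -r$ is the same computation). Your remark that (ii) is redundant given (i) and antisymmetry is exactly the content of the paper's final displayed chain of equalities.
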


\begin{proof}
Suppose that $T^r$ is an $\mathcal{O}$-operator on $(A, \cdot, D, Q)$ associated to $(A^\ast, -L_{\cdot}^\ast, Q^\ast, D^\ast)$. Then $T^r$ is an $\mathcal{O}$-operator on $(A, \cdot, D)$ associated to $(A^\ast, -L_{\cdot}^\ast, Q^\ast)$.
By Proposition~\ref{Ass-operator1},  $r$ is a
solution of the admissible AYBE in $(A, \cdot, D, Q)$.

Suppose that  $r$ is a
solution of the admissible AYBE in $(A, \cdot, D, Q)$.
By Corollary \mref{admi-rep}, $(A^\ast, -L_{\cdot}^\ast, Q^\ast,
D^\ast)$ is a representation of $(A, \cdot, D, Q)$.  By
Proposition \mref{Ass-operator1}, $T^r$ is an $
\mathcal{O}$-operator on $(A, \cdot, D)$ associated to the representation $(A^\ast,
-L_{\cdot}^\ast, Q^\ast)$. Therefore, we only need to check
$QT^r=T^rD^\ast$.
Let $f,g\in A^\ast$. Since $\langle g, T^r(f)\rangle=\langle
g\otimes f, r\rangle$, we have
\begin{eqnarray*}
\langle g, {T^r}^\ast(f)\rangle=\langle f, T^r(g)\rangle=\langle f\otimes g , r\rangle=-\langle g\otimes f , r\rangle=-\langle g, T^r(f)\rangle.
\end{eqnarray*}
This gives ${T^r}^\ast=-T^r$. Since
$DT^r=QT^r$, we have $$T^rD^\ast=-{T^r}^\ast
D^\ast=-(DT^r)^\ast=-(QT^r)^\ast=-{T^r}^\ast Q^\ast=T^rQ^\ast,$$
as required. Therefore $T^r$ is an $\mathcal{O}$-operator on $(A, \cdot, D, Q)$ associated to $(A^\ast, -L_{\cdot}^\ast, Q^\ast, D^\ast)$.
\end{proof}

Next, we construct $\mathcal{O}$-operators on Novikov algebras from $\mathcal{O}$-operators on  admissible commutative differential algebras.

\begin{pro}\mlabel{corr-O-operator-2}
Let $(A, \cdot, D, Q)$ be an admissible commutative differential
algebra and $(A, \circ_{q})$ be the Novikov algebra induced from
$(A, \cdot, D, Q)$. Let $T: V\rightarrow A$ be an
$\mathcal{O}$-operator on $(A, \cdot, D, Q)$ associated to a
representation $(V, l_A, \alpha, \beta)$. Then $T$ is an
$\mathcal{O}$-operator on $(A, \circ_{q})$ associated to $(V,
l_{l_{A},q}, r_{l_{A},q})$. 
\end{pro}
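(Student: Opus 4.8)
The plan is to verify the defining identity of an $\mathcal{O}$-operator on the Novikov algebra $(A,\circ_q)$ associated to $(V, l_{l_A,q}, r_{l_A,q})$ directly, by unwinding everything in terms of the given $\mathcal{O}$-operator data on $(A,\cdot,D,Q)$. Concretely, let $u,v\in V$ and set $T(u), T(v)\in A$. I need to show
\[
T(u)\circ_q T(v) = T\big(l_{l_A,q}(T(u))v\big) + T\big(r_{l_A,q}(T(v))u\big).
\]
First I would expand the left-hand side using the definition $\circ_q$ from Eq.~\meqref{eq:k}:
\[
T(u)\circ_q T(v) = T(u)\cdot (D+qQ)(T(v)).
\]
Since $T$ is an $\mathcal{O}$-operator on $(A,\cdot,D,Q)$, we have $DT=T\alpha$ and $QT=T\beta$, so $(D+qQ)(T(v)) = T((\alpha+q\beta)v)$. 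Writing $v' := (\alpha+q\beta)v \in V$, the left-hand side becomes $T(u)\cdot T(v')$, and applying the multiplicativity condition for the $\mathcal{O}$-operator on $(A,\cdot,D)$ gives
\[
T(u)\cdot T(v') = T\big(l_A(T(u))v' + l_A(T(v'))u\big).
\]

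Next I would match this against the right-hand side. By definition of the representation $(V, l_{l_A,q}, r_{l_A,q})$ from Proposition~\mref{adm-reprent}, we have $l_{l_A,q}(T(u))v = l_A(T(u))(\alpha+q\beta)v = l_A(T(u))v'$, which exactly produces the first term. For the second term, $r_{l_A,q}(T(v))u = l_A\big((D+qQ)(T(v))\big)u = l_A(T((\alpha+q\beta)v))u = l_A(T(v'))u$, using again $DT=T\alpha$ and $QT=T\beta$. So the right-hand side equals $T\big(l_A(T(u))v' + l_A(T(v'))u\big)$, which coincides with the left-hand side. This completes the argument; the verification is entirely formal once the two relations $DT=T\alpha$, $QT=T\beta$ are invoked to move $D+qQ$ through $T$.

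The only point requiring a little care — and the nearest thing to an obstacle — is confirming that $(V, l_{l_A,q}, r_{l_A,q})$ is genuinely a representation of $(A,\circ_q)$ so that the notion of an $\mathcal{O}$-operator associated to it makes sense; but this is exactly the content of Proposition~\mref{adm-reprent}, which we may cite. An alternative, perhaps cleaner, route is to invoke Proposition~\mref{pro:semi-direct}: the $\mathcal{O}$-operator condition for $T$ on $(A,\cdot,D,Q)$ associated to $(V,l_A,\alpha,\beta)$ is equivalent to the graph $\{v + T(v) \mid v\in V\}$ being a subalgebra-type condition inside the semidirect product admissible commutative differential algebra $(A\ltimes_{l_A}V, \cdot, D+\alpha, Q+\beta)$; passing to the induced Novikov algebra via Proposition~\mref{constr2} and using that this induced Novikov algebra is precisely $A\ltimes_{l_{l_A,q},r_{l_A,q}}V$ (Proposition~\mref{pro:semi-direct}), the same graph condition in the Novikov setting is exactly the statement that $T$ is an $\mathcal{O}$-operator on $(A,\circ_q)$ associated to $(V,l_{l_A,q},r_{l_A,q})$. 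Either way the proof is short; I would present the direct computation as the main line and remark on the semidirect-product interpretation.
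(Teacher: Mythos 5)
Your proposal is correct and follows essentially the same computation as the paper's proof: expand $T(u)\circ_q T(v)=T(u)\cdot(D+qQ)T(v)$, use $DT=T\alpha$, $QT=T\beta$ to rewrite as $T(u)\cdot T((\alpha+q\beta)v)$, apply the $\mathcal{O}$-operator identity for $(A,\cdot,D)$, and identify the resulting terms with $l_{l_A,q}$ and $r_{l_A,q}$. The semidirect-product remark is a nice supplementary observation but is not needed.
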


\begin{proof}
For $u$, $v\in V$, we have
{\wuhao \begin{eqnarray*}
T(u)\circ_{q} T(v)&=& T(u)\cdot (D+qQ)T(v)=T(u)\cdot T((\alpha+q\beta)v)\\
&=&T(l_A(T(u))(\alpha+q\beta)v+l_A(T((\alpha+q\beta)v))u)=T(l_{l_A,q}(T(u))v+l_A((D+qQ)T(v))u)\\
&=&T(l_{l_A,q}(T(u))v+r_{l_A,q}(T(v))u).
\end{eqnarray*}}
Thus $T$ is an $\mathcal{O}$-operator on $(A, \circ_{q})$ associated to $(V, l_{l_A,q}, r_{l_A,q})$.
\end{proof}

Let $(A, \cdot, D, Q)$ be an admissible commutative differential
algebra and $(A, \circ_{q})$ be the Novikov algebra induced from
$(A, \cdot, D, Q)$. Let $r$ be an
antisymmetric solution of the admissible AYBE in $(A, \cdot, D,
Q)$. On the one hand, by Corollary \mref{s-O-oper}, $T^r$ is an
$\mathcal{O}$-operator on $(A, \cdot, D, Q)$ associated to
$(A^\ast, -L_{\cdot}^\ast, Q^\ast, D^\ast)$. Thus by Proposition
\mref{corr-O-operator-2}, $T^r$ is an $\mathcal{O}$-operator on
$(A, \circ_q)$ associated to $(A^\ast, l_{-L_\cdot^\ast,q},
r_{-L_\cdot^\ast,q})$. On the other hand, by Proposition
\mref{Corr-Yang-Baxter}, if $q=-\frac{1}{2}$ or $Q$ is a
derivation on $(A,\cdot)$, $r$ is an antisymmetric solution of the
NYBE in $(A,\circ_q)$. Then by Proposition \mref{Nov-o-operator},
$T^r$ is an $\mathcal{O}$-operator on $(A, \circ_q)$ associated to
$(A^\ast, L_{\star_q}^\ast,-R_{\circ_q}^\ast)$.

\begin{pro}
With the notations above, if $q=-\frac{1}{2}$ or $Q$ is a derivation on $(A,\cdot)$, we have
\begin{eqnarray}
l_{-L_\cdot^\ast,q}=L_{\star_q}^\ast,\;\;r_{-L_\cdot^\ast,q}=-R_{\circ_q}^\ast.
\end{eqnarray}
That is, the two approaches to obtain $T^r$ as an $\mathcal{O}$-operator on the Novikov algebra $(A, \circ_q)$ coincide, as shown by the following commutative diagram.
\vspb
\begin{displaymath}
\xymatrix{
\txt{\tiny $r$\\ \tiny an antisymmetric solution of \\ \tiny the admissible AYBE in $(A, \cdot, D, Q)$ } \ar[rr]^-{\rm Cor. \mref{s-O-oper}}   \ar[d]_-{\rm Prop. \mref{Corr-Yang-Baxter}}&& \txt{ \tiny $T^r$\\ \tiny an $\mathcal{O}$-operator on $(A, \cdot, D, Q)$\\ \tiny  associated to $(A^\ast, -L_{\cdot}^\ast, Q^\ast, D^\ast)$} \ar[d]_-{\rm Prop. \mref{corr-O-operator-2}}\\
\txt{\tiny $r$\\ \tiny an antisymmetric solution of \\ \tiny the NYBE in $(A, \circ_{q})$ } \ar[rr]^-{\rm
Prop. \mref{Nov-o-operator}}
            && \txt{\tiny $T^r$\\ \tiny an $\mathcal{O}$-operator on $(A, \circ_{q})$ \\ \tiny associated to $(A^\ast, L_{ \star_q}^\ast, -R_{\circ_{q}}^\ast)=(A^\ast, l_{-L_\cdot^\ast,q}, r_{-L_\cdot^\ast,q})$ }}
\end{displaymath}
\end{pro}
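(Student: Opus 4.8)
The plan is to prove the two operator identities by evaluating both sides on an arbitrary $b\in A$ and pairing against an arbitrary $f\in A^\ast$, using only the admissibility relation~\eqref{eq:RSI1}, the Leibniz rule for $D$, and commutativity of $\cdot$; the assertion that the displayed square commutes will then be immediate. The one point that needs care is keeping straight the two notions of dual map in play: for $\varphi\colon A\to\End_{\bf k}(V)$ one uses $\langle\varphi^\ast(a)f,v\rangle=-\langle f,\varphi(a)v\rangle$, whereas $Q^\ast$ and $D^\ast$ are ordinary transposes, $\langle Q^\ast f,b\rangle=\langle f,Q(b)\rangle$. Feeding the representation $(A^\ast,-L_\cdot^\ast,Q^\ast,D^\ast)$ of $(A,\cdot,D,Q)$ from Corollary~\ref{admi-rep} into Proposition~\ref{adm-reprent} as $(V,l_A,\alpha,\beta)$, one unwinds the definitions to get, for all $a,b\in A$ and $f\in A^\ast$,
\[
\langle l_{-L_\cdot^\ast,q}(a)f,b\rangle=\langle f,(Q+qD)(a\cdot b)\rangle,\qquad
\langle r_{-L_\cdot^\ast,q}(a)f,b\rangle=\langle f,\bigl((D+qQ)a\bigr)\cdot b\rangle.
\]

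For the $r$-identity I would simply invoke commutativity: $\bigl((D+qQ)a\bigr)\cdot b=b\cdot(D+qQ)(a)=b\circ_q a=R_{\circ_q}(a)b$, so $\langle r_{-L_\cdot^\ast,q}(a)f,b\rangle=\langle f,R_{\circ_q}(a)b\rangle=-\langle R_{\circ_q}^\ast(a)f,b\rangle$, i.e.\ $r_{-L_\cdot^\ast,q}=-R_{\circ_q}^\ast$; this needs no restriction on $q$. For the $l$-identity, comparison with $\langle L_{\star_q}^\ast(a)f,b\rangle=-\langle f,a\star_q b\rangle=-\langle f,a\cdot(D+qQ)(b)+b\cdot(D+qQ)(a)\rangle$ shows that $l_{-L_\cdot^\ast,q}=L_{\star_q}^\ast$ is equivalent to the identity
\[
(Q+qD)(a\cdot b)+a\cdot(D+qQ)(b)+b\cdot(D+qQ)(a)=0,\qquad a,b\in A.
\]
Expanding the left-hand side with~\eqref{eq:RSI1} (in both of its forms, which commutativity makes available), the Leibniz rule for $D$, and the identity $a\cdot(D+Q)(b)=(D+Q)(a)\cdot b$ from the Example following Definition~\ref{de:addiff}, it collapses to $(1+2q)(D+Q)(a)\cdot b$ — this is exactly the computation already carried out inside the proof of Proposition~\ref{Corr-Yang-Baxter}. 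Under our hypothesis this term vanishes: if $q=-\frac{1}{2}$ then $1+2q=0$, while if $Q$ is a derivation on $(A,\cdot)$ then Proposition~\ref{pro:cond} gives Eq.~\eqref{eq:conda}, so that $(D+Q)(a)\cdot b=b\cdot(D+Q)(a)=0$. Either way both equalities hold.

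Finally, once the two triples $(A^\ast,l_{-L_\cdot^\ast,q},r_{-L_\cdot^\ast,q})$ and $(A^\ast,L_{\star_q}^\ast,-R_{\circ_q}^\ast)$ are identified, the $\mathcal O$-operator statement produced via Corollary~\ref{s-O-oper} and then Proposition~\ref{corr-O-operator-2} becomes verbatim the one produced via Proposition~\ref{Corr-Yang-Baxter} and then Proposition~\ref{Nov-o-operator}, which is precisely the commutativity of the displayed diagram. I do not expect a genuine obstacle: the whole argument is bookkeeping, and the only subtle points are keeping the two kinds of dual map apart and applying the identity borrowed from the proof of Proposition~\ref{Corr-Yang-Baxter} with $a$ and $b$ in the correct positions.
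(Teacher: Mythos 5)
Your proposal is correct and follows essentially the same route as the paper: unwind the pairings, reduce the $l$-identity to the vanishing of $(1+2q)(D+Q)(a)\cdot b$ (the same expression that appears in the proof of Proposition~\ref{Corr-Yang-Baxter}), and invoke $q=-\tfrac{1}{2}$ or Proposition~\ref{pro:cond}. The only (harmless) difference is that you verify the $r$-identity explicitly and observe it holds for all $q$, where the paper just says ``similarly.''
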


\begin{proof}
For all $a$, $b\in A$ and $f\in A^\ast$, we obtain
\vspb
{\wuhao \begin{eqnarray*}
\langle l_{-L_{\cdot}^\ast,q}(a)f-L_{{\star}_q}^\ast(a)f, b\rangle&=&\langle (Q^\ast+qD^\ast)f, a\cdot b\rangle+\langle f, a{\star}_q b\rangle\\
&=& \langle f, (Q+qD)(a\cdot b)+a\cdot(D+qQ)b+b\cdot(D+qQ)a\rangle\\
&=&\langle f, (1+q)(D+Q)(a)\cdot b+qa\cdot (D+Q)(b)\rangle\\
&=&\langle f,(1+2q)(D+Q)(a)\cdot b\rangle.
\end{eqnarray*}}
Therefore,  if either $q=-\frac{1}{2}$ or $Q$ is a derivation on
        $(A,\cdot)$, the latter amounts to
$D(a)\cdot b=-Q(a)\cdot b$ for all $a, b\in A$ thanks to Proposition \ref{pro:cond},
 we have $\langle l_{-L_{\cdot}^\ast, q}(a)f-L_{{\star}_q}^\ast(a)f, b\rangle=0$ and hence
$l_{-L_{\cdot}^\ast, q}=L_{{\star}_q}^\ast$. Similarly, we have $r_{-L_{\cdot}^\ast,q }=-R_{{\circ}_{q}}^\ast$. Then the conclusion follows directly.
\end{proof}

\begin{pro}\cite[Theorem 4.18]{LLB}\mlabel{O-Oper-2}
Let $(A, \cdot, D, Q)$ be an admissible commutative differential
algebra. Suppose that $(V, l_A, \alpha, \beta)$ is a
representation of $(A, \cdot, D, Q)$ (and hence $(V^\ast, -l_A^\ast,
\beta^\ast, \alpha^\ast)$ is a representation of $(A,
\cdot, D, Q)$). Let $T:V\rightarrow A$ be a linear map. Then
$r=r_T-\tau r_T$ is an antisymmetric solution of the admissible
AYBE in the admissible commutative differential algebra $(
A\ltimes_{-l_A^\ast} V^\ast, \cdot, D+\beta^\ast, Q+\alpha^\ast)$
if and only if $T$ is an $\mathcal{O}$-operator on $(A,\cdot,D,
Q)$ associated to $(V, l_A, \alpha, \beta)$.
\end{pro}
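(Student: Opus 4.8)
The plan is to unwind the definition of ``$r=r_T-\tau r_T$ is an antisymmetric solution of the admissible AYBE in $(A\ltimes_{-l_A^\ast}V^\ast,\cdot,D+\beta^\ast,Q+\alpha^\ast)$'' into its constituent conditions and to match them one by one against the conditions defining an $\mathcal O$-operator on $(A,\cdot,D,Q)$ associated to $(V,l_A,\alpha,\beta)$. Write $B:=A\ltimes_{-l_A^\ast}V^\ast$, $\widetilde D:=D+\beta^\ast$ and $\widetilde Q:=Q+\alpha^\ast$, so that $\widetilde D$ (resp. $\widetilde Q$) restricts to $D$ (resp. $Q$) on $A$ and to $\beta^\ast$ (resp. $\alpha^\ast$) on $V^\ast$; that $(B,\cdot,\widetilde D,\widetilde Q)$ is an admissible commutative differential algebra follows from Propositions~\mref{ad-dual-rep} and~\mref{pro:semi-ACDA}, so the statement is meaningful. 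Since $r=r_T-\tau r_T$ is antisymmetric for every $T$, this condition is vacuous, and it remains to match (i) the AYBE in the commutative associative algebra $B$ against the $\mathcal O$-operator multiplicativity identity $T(u)\cdot T(v)=T(l_A(T(u))v+l_A(T(v))u)$, and (ii) the two differential constraints $(\widetilde D\otimes\id-\id\otimes\widetilde Q)(r)=0$ and $(\id\otimes\widetilde D-\widetilde Q\otimes\id)(r)=0$ against $DT=T\alpha$ and $QT=T\beta$.

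For (i), the plan is to invoke the classical correspondence between $\mathcal O$-operators on a commutative associative algebra and antisymmetric solutions of the AYBE in a semidirect product (see \mcite{Bai}): $r=r_T-\tau r_T$ solves the AYBE in $B=A\ltimes_{-l_A^\ast}V^\ast$ if and only if $T(u)\cdot T(v)=T(l_A(T(u))v+l_A(T(v))u)$ for all $u,v\in V$. For a self-contained derivation one expands $r_{13}\cdot r_{12}+r_{13}\cdot r_{23}-r_{12}\cdot r_{23}$ in $B^{\otimes 3}$; since one tensor slot of $r_T\in A\otimes V^\ast$ lies in $A$ and the other in $V^\ast$, since $V^\ast\cdot V^\ast=0$ in $B$, and since both the $A$-action on $V^\ast$ and the $V^\ast$-action on $A$ are given by $-l_A^\ast$, every homogeneous component of the left-hand side vanishes formally except the one in $A\otimes A\otimes V^\ast$, which equals $\sum_i\bigl(T(u)\cdot T(v)-T(l_A(T(u))v+l_A(T(v))u)\bigr)\otimes v^i$ and vanishes exactly when the $\mathcal O$-operator identity holds. (A more economical alternative is to apply Corollary~\mref{s-O-oper} directly to the admissible commutative differential algebra $(B,\cdot,\widetilde D,\widetilde Q)$ and then unwind the $\mathcal O$-operator condition obtained for $T^r$ under the splitting $B^\ast=A^\ast\oplus V$, which packages (i) and (ii) at once.)

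For (ii), I would argue by a short direct computation. Pick a basis $\{v_i\}$ of $V$ with dual basis $\{v^i\}$, so $r=\sum_i\bigl(T(v_i)\otimes v^i-v^i\otimes T(v_i)\bigr)$. From $\langle\alpha^\ast(v^i),v_j\rangle=\langle v^i,\alpha(v_j)\rangle$, $\langle\beta^\ast(v^i),v_j\rangle=\langle v^i,\beta(v_j)\rangle$ and $\sum_i\langle v^i,w\rangle v_i=w$ one gets $\sum_i T(v_i)\otimes\alpha^\ast(v^i)=\sum_i T\alpha(v_i)\otimes v^i$ and $\sum_i\beta^\ast(v^i)\otimes T(v_i)=\sum_i v^i\otimes T\beta(v_i)$, whence
\[(\widetilde D\otimes\id-\id\otimes\widetilde Q)(r)=\sum_i(DT-T\alpha)(v_i)\otimes v^i+\sum_i v^i\otimes(QT-T\beta)(v_i),\]
with the first sum in $A\otimes V^\ast$ and the second in $V^\ast\otimes A$; hence it vanishes iff $DT=T\alpha$ and $QT=T\beta$. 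The identical computation gives $(\id\otimes\widetilde D-\widetilde Q\otimes\id)(r)=\sum_i(T\beta-QT)(v_i)\otimes v^i+\sum_i v^i\otimes(T\alpha-DT)(v_i)$, again vanishing iff $DT=T\alpha$ and $QT=T\beta$. Combining with (i): $r=r_T-\tau r_T$ is an antisymmetric solution of the admissible AYBE in $(B,\cdot,\widetilde D,\widetilde Q)$ iff $T$ satisfies the $\mathcal O$-operator multiplicativity identity together with $DT=T\alpha$ and $QT=T\beta$, i.e. iff $T$ is an $\mathcal O$-operator on $(A,\cdot,D,Q)$ associated to $(V,l_A,\alpha,\beta)$, as claimed.

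The step I expect to be the main obstacle is (i): it is the only genuinely computational piece, and carrying it out by hand requires careful tracking of the sign-sensitive components of the $B$-action on itself and of the collapse $V^\ast\cdot V^\ast=0$. The differential part (ii) is, by contrast, the short linear-algebra identity displayed above, and assembling the two halves is bookkeeping.
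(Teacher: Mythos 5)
The paper does not prove this proposition at all: it is quoted verbatim as \cite[Theorem 4.18]{LLB}, so there is no in-paper argument to compare against. Your overall strategy is the natural (and surely the intended) one: split ``antisymmetric solution of the admissible AYBE in $B=A\ltimes_{-l_A^\ast}V^\ast$'' into the plain AYBE plus the two differential constraints, match the former with the multiplicativity identity via the classical $\mathcal O$-operator/AYBE correspondence of \mcite{Bai}, and match the latter with $DT=T\alpha$, $QT=T\beta$ by the dual-basis computation. Part (ii) as you wrote it is correct (the sign conventions work out: $\alpha^\ast,\beta^\ast$ are honest transposes, with no minus sign, unlike $l_A^\ast$), and the reduction of the ambient admissibility of $(B,\cdot,D+\beta^\ast,Q+\alpha^\ast)$ to Propositions~\mref{ad-dual-rep} and~\mref{pro:semi-ACDA} is right.

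The one concrete error is in your ``self-contained derivation'' of step (i). Since each leg of $r=r_T-\tau r_T$ has one tensor factor in $A$ and one in $V^\ast$, and since $A\cdot A\subseteq A$, $A\cdot V^\ast\subseteq V^\ast$, $V^\ast\cdot V^\ast=0$ in $B$, the expression $r_{13}\cdot r_{12}+r_{13}\cdot r_{23}-r_{12}\cdot r_{23}$ has \emph{no} component in $A\otimes A\otimes V^\ast$ at all; the surviving homogeneous components lie in $A\otimes V^\ast\otimes V^\ast$, $V^\ast\otimes A\otimes V^\ast$ and $V^\ast\otimes V^\ast\otimes A$ (e.g.\ the last one is $\sum_{i,j}\bigl(l_A^\ast(T(v_j))v^i\otimes v^j\otimes T(v_i)+v^i\otimes v^j\otimes T(v_i)\cdot T(v_j)+v^i\otimes l_A^\ast(T(v_i))v^j\otimes T(v_j)\bigr)$, whose pairing against $u\otimes v\otimes g$ gives $\langle g,\,T(u)\cdot T(v)-T(l_A(T(u))v)-T(l_A(T(v))u)\rangle$). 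Each of the three components vanishes precisely when the multiplicativity identity holds (using commutativity of $\cdot$ for one of them), so your conclusion is right, but the displayed ``component in $A\otimes A\otimes V^\ast$'' does not exist and the formula you give for it does not typecheck. Since you also offer the citation to \mcite{Bai} as the primary route for (i), the proof stands, but the expansion sketch should be corrected before relying on it.
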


Let $(A, \cdot, D, Q)$ be an admissible commutative differential algebra, $(A, \circ_q)$ be the Novikov algebra induced from $(A, \cdot, D, Q)$, and $T$ be an $\mathcal{O}$-operator on $(A, \cdot, D, Q)$
associated to the representation $(V, l_A, \alpha, \beta)$. On the
one hand, by Proposition \mref{O-Oper-2}, $r=r_T-\tau r_T$ is an antisymmetric solution of the admissible AYBE in
$(A\ltimes_{-l_A^\ast} V^\ast,\cdot, D+\beta^\ast,
Q+\alpha^\ast)$. Then by Proposition \mref{Corr-Yang-Baxter}, if
$q=-\frac{1}{2}$ or $Q+\alpha^\ast$ is a derivation on
$(A\ltimes_{-l_A^\ast} V^\ast,\cdot)$, then $r=r_T-\tau r_T$ is an antisymmetric solution of the NYBE in
$A\ltimes_{l_{-l_A^\ast, q}, r_{-l_A^\ast, q}}V^\ast$. On the
other hand, by Proposition \mref{corr-O-operator-2}, $T$ is an
$\mathcal{O}$-operator on $(A, \circ_{q})$ associated to $(V,
l_{l_A,q}, r_{l_A,q})$. Then by Proposition \mref{Nov-o-operator},
$r=r_T-\tau r_T$ is an antisymmetric
solution of the NYBE in $A\ltimes_{l_{l_A,q}^\ast+r_{l_A,q}^\ast,
-r_{l_A,q}^\ast} V^\ast$.

\begin{pro}\label{pro:o-ybe}
With the notations above, if $q=-\frac{1}{2}$ or
\vspb
\begin{eqnarray}\mlabel{eq:cond2}
l_A(a)(\alpha+\beta)(v)=0,\;\;a\in A, v\in V,
\end{eqnarray}
then
\vspb
$$l_{l_A,q}^\ast+r_{l_A,q}^\ast=l_{-l_A^\ast, q},\;\;-r_{l_A,q}^\ast=r_{-l_A^\ast,q}.$$
Furthermore, if $q=-\frac{1}{2}$ or Eq. \meqref{eq:cond2} and the following extra condition are satisfied
\begin{eqnarray}\mlabel{eq:condd2}
a\cdot (D+Q)(b)=0,\;\;l_A((D+Q)(a))v=0,\;\;(\alpha+\beta)(l_A(a)v)=0,\;\;\;\;a, b\in A,\;\;v\in V,
\end{eqnarray}
then the two approaches that start with an $\calo$-operator $T$ to
obtain $r$ as an antisymmetric solution of
the NYBE coincide, as shown by the commutativity of the following
diagram.

\begin{equation*}
    \begin{split}
\xymatrix{
\txt{\tiny $T$\\ \tiny an $\mathcal{O}$-operator on $(A, \cdot, D, Q)$  \\\tiny associated to $(V, l_A, \alpha, \beta)$ } \ar[rr]^-{\rm Prop. \mref{O-Oper-2}}   \ar[d]_-{\rm Prop. \mref{corr-O-operator-2}}&& \txt{\tiny $r=r_T-\tau r_T$\\ \tiny an antisymmetric solution of \\ \tiny the admissible AYBE \\ \tiny in  $(A\ltimes_{-l_A^\ast} V^\ast,\cdot, D+\beta^\ast, Q+\alpha^\ast)$ } \ar[d]_-{\rm Prop. \mref{Corr-Yang-Baxter}}\\
\txt{\tiny $T$\\  \tiny an  $\mathcal{O}$-operator on $(A, \circ_{q})$\\ \tiny associated to $(V, l_{l_A,q}, r_{l_A,q})$ } \ar[rr]^-{\rm
Prop. \mref{Nov-o-operator}}
            && \txt{\tiny $r=r_T-\tau r_T$\\ \tiny an antisymmetric solution of\\  \tiny the NYBE in $A\ltimes_{l_{l_A,q}^\ast+r_{l_A,q}^\ast, -r_{l_A,q}^\ast} V^\ast=A\ltimes_{l_{-l_A^\ast,q}, r_{-l_A^\ast, q}} V^\ast$ }}
\end{split}
\mlabel{eq:o-ybe}
\end{equation*}
\end{pro}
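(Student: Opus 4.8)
The plan is to reduce the commutativity of the diagram to the single pair of operator identities $l_{l_A,q}^{\ast}+r_{l_A,q}^{\ast}=l_{-l_A^{\ast},q}$ and $-r_{l_A,q}^{\ast}=r_{-l_A^{\ast},q}$: once these hold, the two lower--right corners of the diagram are the \emph{same} semi-direct product Novikov algebra, and both routes attach to it the same antisymmetric element $r=r_T-\tau r_T$, so there is nothing further to verify.

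First I would prove the two operator identities, which is already the first assertion of the proposition. Unwinding the definitions from Proposition~\mref{adm-reprent} (for $l_{l_A,q},r_{l_A,q}$) and applying the same formulas to the dual representation $(V^{\ast},-l_A^{\ast},\beta^{\ast},\alpha^{\ast})$ of Proposition~\mref{ad-dual-rep} (for $l_{-l_A^{\ast},q},r_{-l_A^{\ast},q}$), one pairs both sides against $f\in V^{\ast}$ and $w\in V$ and uses the representation axioms of $(V,l_A,\alpha,\beta)$. The second identity $-r_{l_A,q}^{\ast}=r_{-l_A^{\ast},q}$ is immediate, since both sides act as the transpose of $l_A((D+qQ)a)$. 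For the first, after invoking $\alpha(l_A(a)v)=l_A(D(a))v+l_A(a)\alpha(v)$, the $\beta$-axiom, and the identity $l_A(a)(\alpha+\beta)(v)=l_A((D+Q)(a))v$ (which holds in any representation of an admissible commutative differential algebra, being exactly the equality of the two expressions for $\beta(l_A(a)v)$), the discrepancy collapses to a multiple of $(1+2q)\,l_A(a)(\alpha+\beta)(w)$; this vanishes precisely when $q=-\tfrac12$ or Eq.~\meqref{eq:cond2} holds. This computation is parallel to the one in the preceding proposition establishing $l_{-L_{\cdot}^{\ast},q}=L_{\star_q}^{\ast}$.

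Next I would traverse the two routes around the square. The lower route is unconditional: Proposition~\mref{corr-O-operator-2} makes $T$ an $\mathcal O$-operator on $(A,\circ_q)$ associated to $(V,l_{l_A,q},r_{l_A,q})$, and Proposition~\mref{Nov-o-operator}(2) then gives that $r=r_T-\tau r_T$ is an antisymmetric solution of the NYBE in $A\ltimes_{l_{l_A,q}^{\ast}+r_{l_A,q}^{\ast},-r_{l_A,q}^{\ast}}V^{\ast}$. For the upper route, Proposition~\mref{O-Oper-2} gives that $r=r_T-\tau r_T$ is an antisymmetric solution of the admissible AYBE in $(A\ltimes_{-l_A^{\ast}}V^{\ast},\cdot,D+\beta^{\ast},Q+\alpha^{\ast})$, and I would then apply Proposition~\mref{Corr-Yang-Baxter}. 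When $q\neq-\tfrac12$, its hypothesis asks that $Q+\alpha^{\ast}$ be a derivation on the underlying commutative associative algebra $(A\ltimes_{-l_A^{\ast}}V^{\ast},\cdot)$; by Proposition~\mref{pro:cond} this is equivalent to $x\cdot(D+\beta^{\ast}+Q+\alpha^{\ast})(y)=0$ for all $x,y\in A\oplus V^{\ast}$, and expanding $x=a+f$, $y=b+g$ and using that $V^{\ast}\cdot V^{\ast}=0$ in the semi-direct product, this decomposes exactly into the three identities comprising Eq.~\meqref{eq:condd2}, namely $a\cdot(D+Q)(b)=0$, $l_A((D+Q)(a))v=0$ and $(\alpha+\beta)(l_A(a)v)=0$. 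Hence Proposition~\mref{Corr-Yang-Baxter} applies, and $r=r_T-\tau r_T$ is an antisymmetric solution of the NYBE in the Novikov algebra induced from $(A\ltimes_{-l_A^{\ast}}V^{\ast},\cdot,D+\beta^{\ast},Q+\alpha^{\ast})$, which by Proposition~\mref{pro:semi-direct} is exactly $A\ltimes_{l_{-l_A^{\ast},q},r_{-l_A^{\ast},q}}V^{\ast}$.

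Finally, the operator identities of the first step give $A\ltimes_{l_{l_A,q}^{\ast}+r_{l_A,q}^{\ast},-r_{l_A,q}^{\ast}}V^{\ast}=A\ltimes_{l_{-l_A^{\ast},q},r_{-l_A^{\ast},q}}V^{\ast}$, so both routes produce the same antisymmetric NYBE solution $r=r_T-\tau r_T$ in the same Novikov algebra, and the diagram commutes. The bulk of the work, and the step most prone to sign and dualization errors, is the third paragraph: carrying the transposed maps $l_A^{\ast},\alpha^{\ast},\beta^{\ast}$ through the semi-direct product so that the derivation condition on $Q+\alpha^{\ast}$ splits exactly into Eq.~\meqref{eq:condd2}, and confirming that the residual term in the operator-identity computation is governed precisely by Eq.~\meqref{eq:cond2} rather than by a stronger hypothesis. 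The underlying identities are routine consequences of the representation axioms, but they must be tracked through several layers of dualization.
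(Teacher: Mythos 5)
Your proposal is correct and follows essentially the same route as the paper's proof: the operator identities are obtained by the same pairing computation that reduces the discrepancy to $(1+2q)\,l_A(a)(\alpha+\beta)(v)$, and the derivation condition on $Q+\alpha^{\ast}$ is unpacked into the three identities of Eq.~\meqref{eq:condd2} exactly as in the paper. Your explicit appeal to Proposition~\mref{pro:semi-direct} to identify the two semi-direct product Novikov algebras only spells out what the paper leaves as ``the conclusion follows directly.''
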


\begin{proof}
 Let $a\in A$, $f\in V^\ast$ and
$v\in V$. Then we have
\vspb
{\wuhao\begin{eqnarray*}
\langle (l_{l_A,q}^\ast+r_{l_A,q}^\ast)(a)f, v\rangle &=& -\langle f, l_{l_A,q}(a)v+r_{l_A,q}(a)v \rangle\\
&=& -\langle f, l_A(a)(\alpha+q\beta)(v)+l_A((D+qQ)(a))v\rangle,\\
\langle l_{-l_A^\ast,q}(a)f, v\rangle &=& -\langle l_A^\ast(a)(\beta^\ast+q\alpha^\ast)(f), v\rangle\\
&=&\langle (\beta^\ast+q\alpha^\ast)(f), l_A(a)v\rangle=\langle f, (\beta+q\alpha)(l_A(a)v) \rangle\\
&=&\langle f,
l_A(a)\beta(v)-l_A(D(a))v+ql_A(D(a))v+ql_A(a)\alpha(v)\rangle.
\end{eqnarray*}}
Hence we obtain
\vspb
{\wuhao \begin{eqnarray*}
&&\langle l_{-l_A^\ast,q}(a)f-(l_{l_A,q}^\ast+r_{l_A,q}^\ast)(a)f, v\rangle\\
&=&\langle f, (1+q)l_A(a)\beta(v)+(1+q)l_A(a)\beta(v)+ql_A(D(a))v+ql_A(Q(a))v\rangle\\
&=&\langle f, (1+2q)l_A(a)(\alpha+\beta)(v)\rangle.
\end{eqnarray*}}
Therefore, if $q=-\frac{1}{2}$ or Eq. (\mref{eq:cond2}) holds, we
have
$l_{l_A,q}^\ast+r_{l_A,q}^\ast=l_{-l_A^\ast,q}$.
The equality $
r_{-l_A^\ast,q}=-r_{l_A,q}^\ast$ can be
checked similarly.

Note that by Proposition \ref{pro:cond}, $Q+\alpha^\ast$ is a derivation on $(A\ltimes_{-l_A^\ast} V^\ast,\cdot)$ if and only if the following equality
\vspb
\begin{eqnarray}\label{eq:condd}
(a+f)\cdot (D+\beta^\ast)(b+g)=-(a+f)\cdot(Q+\alpha^\ast)(b+g), \;\; a, b\in A,\;\; f, g\in V^\ast,
\end{eqnarray}
holds. Note that
\vspb
\begin{eqnarray*}
(a+f)\cdot (D+\beta^\ast)(b+g)=a\cdot D(b)-l_A^\ast(a)\beta^\ast(g)-l_A^\ast(D(b))f,
\end{eqnarray*}
and 
\vspb
\begin{eqnarray*}
-(a+f)\cdot (Q+\alpha^\ast)(b+g)=-a\cdot Q(b)+l_A^\ast(a)\alpha^\ast(g)+l_A^\ast(Q(b))f.
\end{eqnarray*}
Therefore, Eq. (\ref{eq:condd}) holds  if and only if $a\cdot D(b)=-a\cdot Q(b)$, $ l_{A}^\ast(a)\beta^\ast(g)=-l_A^\ast(a)\alpha^\ast(g)$ and $l_A^\ast(D(b))f=-l_A^\ast(Q(b))f$ for all $a$, $b\in A$ and $f$, $g\in V^\ast$. Since
\vspb
\begin{eqnarray*}
&&\langle l_A^\ast(a)\beta^\ast(g)+l_A^\ast(a)\alpha^\ast(g), v\rangle=-\langle g, (\alpha+\beta)(l_A(a)v))\rangle,
\end{eqnarray*}
 $ l_A^\ast(a)\beta^\ast(g)=-l_A^\ast(a)\alpha^\ast(g)$ holds for all $a\in A$ and $g\in V^\ast$ if and only if $(\alpha+\beta)(l_A(a)v)=0$ for all $a\in A$, $v\in V$. Similarly, $l_A^\ast(D(b))f=-l_A^\ast(Q(b))f$ for all $b\in A$ and $f\in V^\ast$ holds if and only if $l_A((D+Q)(b))v=0$ for all $b\in A$, $v\in V$.
 Therefore, Eq. (\mref{eq:condd}) holds if and only if Eq. (\mref{eq:condd2}) holds. Then the conclusion follows directly.
 \vspb
\end{proof}

\subsection{NYBE via AYBE: pre-Novikov algebras via admissible differential Zinbiel algebras}
We first recall the construction of antisymmetric solutions of the NYBE from pre-Novikov algebras. 
\begin{defi}\mcite{HBG}
Let $A$ be a vector space with binary operations
$\lhd$ and $\rhd$. Then $(A, \lhd, \rhd)$ is called a {\bf pre-Novikov algebra} if
\begin{eqnarray}
&&\mlabel{ND1}a\rhd (b\rhd c)=(a\rhd b+a\lhd b)\rhd c+b\rhd (a\rhd c)-(b\rhd a+b\lhd a)\rhd c,\\
&&\mlabel{ND2} a\rhd (b\lhd c)=(a\rhd b)\lhd c+b\lhd (a\lhd c+a\rhd c)-(b\lhd a)\lhd c,\\
&&\mlabel{ND3} (a\lhd b+a\rhd b)\rhd c=(a\rhd c)\lhd b,\\
&&\mlabel{ND4}(a\lhd b)\lhd c=(a\lhd c)\lhd b, \quad a, b, c\in A.
\end{eqnarray}
\end{defi}

For a pre-Novikov algebra $(A, \lhd, \rhd)$, denote $L_\rhd(a)(b)=a\rhd b$ and $R_\lhd (a)(b)=b\lhd a$ for all $a$, $b\in A$.

\begin{pro} \cite[Propositions 3.33 \& 3.34]{HBG} \mlabel{ndend-o-oper}
\begin{enumerate}
\item Let $(A, \lhd, \rhd)$ be a pre-Novikov algebra. Then a Novikov algebra structure on $A$ is given by
\vspa
\begin{eqnarray}
\mlabel{ND5}\circ: A\otimes A\rightarrow A, \quad
a \circ b\coloneqq a\lhd b+a\rhd b,~~~~\;\;\;a, b\in A.
\end{eqnarray}
$(A, \circ)$ is called {\bf the descendent Novikov algebra} of  $(A, \lhd, \rhd)$.
Moreover, $(A, L_{\rhd}, R_{\lhd})$ is a representation of $(A, \circ)$, and the identity map $\id$ is an $\mathcal O$-operator on $(A, \circ)$ associated to the representation $(A,
L_{\rhd}, R_{\lhd})$.

\item Let $(A, \circ)$ be a Novikov algebra, $(V, l_A,
r_A)$ be a representation of $(A, \circ)$ and $T: V\rightarrow A$ be an
$\mathcal{O}$-operator on $(A, \circ)$ associated to $(V, l_A, r_A)$. Then there
exists a pre-Novikov algebra structure on $V$ defined by
\vspb
\begin{eqnarray}\mlabel{eq:ndend}
u\rhd v=l_A(T(u))v,\;\; u\lhd v= r_A(T(v))u,\;\;\; u,~~v\in V.
\end{eqnarray}
\end{enumerate}
\end{pro}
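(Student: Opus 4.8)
The plan is to prove both parts of the proposition by direct verification, organized so that each defining identity of the target structure is matched with one, or at most a few, of the defining identities of the source structure. For the first part, I would begin by checking that $(A,\circ)$ with $a\circ b=a\lhd b+a\rhd b$ satisfies the two Novikov axioms \eqref{lef} and \eqref{Nov}. Expanding $\circ$ into $\lhd$ and $\rhd$ in \eqref{Nov} and using \eqref{ND3} once to replace $(a\lhd b+a\rhd b)\rhd c$ by $(a\rhd c)\lhd b$, the identity \eqref{Nov} collapses to $(a\lhd b)\lhd c=(a\lhd c)\lhd b$, which is exactly \eqref{ND4}. For the left-symmetry \eqref{lef}, I would again use \eqref{ND3} to remove the terms ending in $\rhd c$ and then use \eqref{ND2} to rewrite $(a\rhd b)\lhd c-a\rhd(b\lhd c)$ as $(b\lhd a)\lhd c-b\lhd(a\circ c)$; after this the associator $(a\circ b)\circ c-a\circ(b\circ c)$ becomes manifestly symmetric under $a\leftrightarrow b$ except for the combination $(a\rhd c)\lhd b-a\rhd(b\rhd c)$, whose symmetry in $a$ and $b$ is precisely the content of \eqref{ND1} together with \eqref{ND3}.

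Still within the first part, to see that $(A,L_\rhd,R_\lhd)$ is a representation of $(A,\circ)$, I would substitute $l_A=L_\rhd$, $r_A=R_\lhd$ and expand $\circ=\lhd+\rhd$ in the representation axioms: the four conditions \eqref{lef-mod1}, \eqref{lef-mod2}, \eqref{Nov-mod1}, \eqref{Nov-mod2} then become, verbatim, \eqref{ND1}, \eqref{ND2}, \eqref{ND3}, \eqref{ND4}, with the module vector playing the role of the third algebra element. The last claim is immediate: $\id$ is an $\mathcal O$-operator because its defining relation $\id(u)\circ\id(v)=\id\big(L_\rhd(\id(u))v\big)+\id\big(R_\lhd(\id(v))u\big)$ is just $u\circ v=u\rhd v+u\lhd v$. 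For the second part, the only structural input is the $\mathcal O$-operator identity, which I would first restate as $T(u)\circ T(v)=T(u\rhd v+u\lhd v)$ for the operations $u\rhd v:=l_A(T(u))v$ and $u\lhd v:=r_A(T(v))u$. I would then verify \eqref{ND1}--\eqref{ND4} one at a time: after applying $l_A$ or $r_A$ as appropriate and using the restated identity to turn every $T$ of a mixed expression into a $\circ$-product of elements of $A$, each of \eqref{ND1}, \eqref{ND2}, \eqref{ND3}, \eqref{ND4} reduces to exactly one of \eqref{lef-mod1}, \eqref{lef-mod2}, \eqref{Nov-mod1}, \eqref{Nov-mod2}, evaluated at $T(a),T(b),T(c)$ and a vector of $V$; so the proof is just these four one-line matchings.

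All of this is elementary bookkeeping, and the correspondences between the axioms of the two structures are for the most part transparent term-by-term identifications. The one place where a careless expansion does not close immediately is the left-symmetry identity \eqref{lef} in the first part: there eight summands must be regrouped and three of the pre-Novikov axioms invoked in the correct order. I expect that to be the only genuine computational hurdle.
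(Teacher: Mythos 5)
Your verification is correct, and it is the standard argument: with $l_A=L_\rhd$, $r_A=R_\lhd$ the four representation axioms \eqref{lef-mod1}--\eqref{Nov-mod2} are verbatim \eqref{ND1}--\eqref{ND4}, the Novikov axioms for $\circ=\lhd+\rhd$ follow from regrouping via \eqref{ND1}--\eqref{ND4}, and in part (2) the relation $T(u)\circ T(v)=T(u\rhd v+u\lhd v)$ converts each pre-Novikov axiom into one representation axiom evaluated at $T(u),T(v)$. Note that the paper itself gives no proof of this proposition -- it is recalled from \cite[Propositions 3.33 \& 3.34]{HBG} -- so there is nothing to contrast with; your direct axiom-matching is exactly the argument carried out there.
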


\begin{pro}\mlabel{NYB-ND}\cite[Theorem 3.35]{HBG}
Let $(A, \lhd, \rhd)$ be a pre-Novikov algebra and $(A,
\circ)$ be the descendent Novikov algebra.  Then for a basis $\{e_1, \ldots, e_n\}$ of $A$ and its dual basis $\{e_1^\ast, \ldots, e_n^\ast\}$ of $A^\ast$,
\vspb
\begin{eqnarray}\mlabel{eq:solu}
r=\sum_{i=1}^n(e_i\otimes e_i^\ast-e_i^\ast\otimes e_i),
\end{eqnarray}
is an antisymmetric solution of  the \nybe
in the Novikov algebra $A\ltimes_{L_\rhd^\ast+R_\lhd^\ast,
-R_\lhd^\ast}A^\ast$.
\end{pro}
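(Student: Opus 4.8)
The plan is to obtain the result immediately from the two $\mathcal O$-operator statements already recorded, namely Proposition~\mref{ndend-o-oper}(1) and Proposition~\mref{Nov-o-operator}(2), so that no direct verification of $r\bullet r=0$ is needed. First I would apply Proposition~\mref{ndend-o-oper}(1): the descendent Novikov algebra $(A,\circ)$, where $a\circ b=a\lhd b+a\rhd b$, carries the representation $(A,L_\rhd,R_\lhd)$, and the identity map $\id\colon A\to A$ is an $\mathcal O$-operator on $(A,\circ)$ associated to $(A,L_\rhd,R_\lhd)$.

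Next I would feed this $\mathcal O$-operator into Proposition~\mref{Nov-o-operator}(2), taking $V=A$, $l_A=L_\rhd$, $r_A=R_\lhd$ and $T=\id$. That proposition then yields that $r=r_{\id}-\tau r_{\id}$ is an antisymmetric solution of the NYBE in the semi-direct product Novikov algebra $A\ltimes_{l_A^\ast+r_A^\ast,\,-r_A^\ast}V^\ast=A\ltimes_{L_\rhd^\ast+R_\lhd^\ast,\,-R_\lhd^\ast}A^\ast$, which is precisely the algebra named in the statement.

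It then remains to identify $r_{\id}$. Under the linear isomorphism ${\rm Hom}_{\bf k}(A,A)\cong A\otimes A^\ast$ used throughout the section, the identity map corresponds to the canonical element $\sum_{i=1}^n e_i\otimes e_i^\ast$; in particular this element is independent of the chosen basis. Hence $r_{\id}=\sum_{i=1}^n e_i\otimes e_i^\ast$, so that $\tau r_{\id}=\sum_{i=1}^n e_i^\ast\otimes e_i$ and $r=\sum_{i=1}^n(e_i\otimes e_i^\ast-e_i^\ast\otimes e_i)$, which is Eq.~\meqref{eq:solu}; the antisymmetry $\tau r=-r$ can be read off directly from this expression.

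I do not anticipate a genuine obstacle here: the representation axioms for $(A,L_\rhd,R_\lhd)$ and the equivalence between $\mathcal O$-operators and antisymmetric solutions of the NYBE are exactly what the two cited propositions supply, so the only things to settle in this proof are the bookkeeping identification of $r_{\id}$ with the canonical tensor and the trivial basis-independence and antisymmetry of $r$. A self-contained alternative would be to expand $r\bullet r=r_{13}\circ r_{23}+r_{12}\star r_{23}+r_{13}\circ r_{12}$ directly in the semi-direct product and collapse the terms using the pre-Novikov identities \meqref{ND1}--\meqref{ND4}, but routing through $\mathcal O$-operators renders that computation unnecessary.
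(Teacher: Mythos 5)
Your derivation is correct and is exactly how the cited result \cite[Theorem 3.35]{HBG} is obtained: the paper itself offers no proof beyond the citation, and the intended argument is precisely to combine Proposition~\mref{ndend-o-oper}(1) (the identity map is an $\mathcal O$-operator on the descendent Novikov algebra associated to $(A,L_\rhd,R_\lhd)$) with Proposition~\mref{Nov-o-operator}(2), together with the identification $r_{\id}=\sum_{i}e_i\otimes e_i^\ast$. Nothing is missing.
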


Also recall that a vector space $A$ with a binary operation $\diamond$ on $A$ is called a {\bf Zinbiel algebra}~ \mcite{Lod} if
\begin{equation}
a\diamond (b\diamond c)=(b\diamond a)\diamond c+(a\diamond b)\diamond c,\;\;\;\;
a, b, c\in A.
\end{equation}

Furthermore, if $D$ is a derivation on a Zinbiel algebra
$(A, \diamond)$ in the sense that
$$D(a\diamond b)=a\diamond D(b)+D(a)\diamond b,\;\;\;\; a, b\in A,$$
then $(A, \diamond, D)$ is called a {\bf differential Zinbiel algebra}~\mcite{LLB}.

Next, we introduce the notion of admissible differential
Zinbiel algebras.
\begin{defi}
Let $(A, \diamond, D)$ be a differential Zinbiel algebra and $Q:A\rightarrow A$ be a linear map. If
$Q$ satisfies
\vspb
\begin{eqnarray*}
Q(a\diamond b)=Q(a)\diamond b-a\diamond D(b)=a\diamond Q(b)-D(a)\diamond b,\;\;\; a, b\in A,
\end{eqnarray*}
then $(A, \diamond, D, Q)$ is called an {\bf admissible differential Zinbiel algebra}.
\end{defi}

Note that if $(A, \diamond, D)$ is a differential Zinbiel algebra, then $(A, \diamond, D, -D)$ is an admissible differential Zinbiel algebra.

Pre-Novikov algebras can be constructed from admissible differential Zinbiel algebras as follows.
\begin{pro}\mlabel{pre-Nov-constr2}
Let $(A,\diamond, D, Q)$ be an admissible differential Zinbiel algebra and $q\in {\bf k}$.
Define binary operations  $\lhd$ and $\rhd: A\otimes
A\rightarrow A$ by
\begin{equation}\mlabel{constr-ndend-2}
a\lhd_q b:=(D+qQ)(b)\diamond a,\;\;a\rhd_q b:=a\diamond (D+qQ)b,\;\;\;\; a,b\in A.
\end{equation}
Then $(A,\lhd_q,\rhd_q)$ is a pre-Novikov algebra.
\end{pro}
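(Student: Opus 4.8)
The plan is to reduce the statement to the $\mathcal{O}$-operator machinery already in place by symmetrizing the Zinbiel product. Write $a\cdot b:=a\diamond b+b\diamond a$ and $\partial:=D+qQ$, so that $a\rhd_q b=a\diamond\partial(b)$ and $a\lhd_q b=\partial(b)\diamond a$. The first step is to check that $(A,\cdot,D,Q)$ is an admissible commutative differential algebra. Commutativity and associativity of $\cdot$ are the classical consequences of the Zinbiel identity $a\diamond(b\diamond c)=(a\diamond b)\diamond c+(b\diamond a)\diamond c$; that $D$ is a derivation of $\cdot$ is immediate from its being a derivation of $\diamond$; and the admissibility identity $Q(a\cdot b)=Q(a)\cdot b-a\cdot D(b)$ follows by applying the relation $Q(a\diamond b)=Q(a)\diamond b-a\diamond D(b)$ to the first summand and $Q(b\diamond a)=b\diamond Q(a)-D(b)\diamond a$ to the second and adding. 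It is exactly here that one must play off the two equivalent forms of the Zinbiel admissibility relation against each other.

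Next I would record that $(A,L_\diamond,D,Q)$ is a representation of the admissible commutative differential algebra $(A,\cdot,D,Q)$, in the sense of the definition preceding Proposition~\ref{pro:semi-ACDA}. Indeed $(A,L_\diamond)$ is a module over $(A,\cdot)$ since $L_\diamond(a\cdot b)=L_\diamond(a)L_\diamond(b)$ is, once more, the Zinbiel identity; the condition \eqref{diff-module} for $\alpha=D$ is the derivation property $D(a\diamond v)=D(a)\diamond v+a\diamond D(v)$; and the $\beta$-condition for $\beta=Q$ is $Q(a\diamond v)=a\diamond Q(v)-D(a)\diamond v=Q(a)\diamond v-a\diamond D(v)$, which is precisely the defining identity of an admissible differential Zinbiel algebra. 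Moreover $\id:A\to A$ is an $\mathcal{O}$-operator on $(A,\cdot,D,Q)$ associated to $(A,L_\diamond,D,Q)$: the conditions $D\id=\id D$ and $Q\id=\id Q$ hold trivially, and $\id(a)\cdot\id(b)=a\diamond b+b\diamond a=\id\bigl(L_\diamond(\id a)b+L_\diamond(\id b)a\bigr)$.

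To conclude, note that by Proposition~\ref{constr2} the induced Novikov algebra $(A,\circ_q)$ has $a\circ_q b=a\cdot\partial(b)$, and by Proposition~\ref{corr-O-operator-2} the map $\id$ is an $\mathcal{O}$-operator on $(A,\circ_q)$ associated to $(A,l_{L_\diamond,q},r_{L_\diamond,q})$, where Proposition~\ref{adm-reprent} gives $l_{L_\diamond,q}(a)v=L_\diamond(a)\partial(v)=a\diamond\partial(v)$ and $r_{L_\diamond,q}(a)v=L_\diamond(\partial(a))v=\partial(a)\diamond v$. Feeding the $\mathcal{O}$-operator $\id$ into Proposition~\ref{ndend-o-oper}(2) produces a pre-Novikov structure on $A$ given by $u\rhd v=l_{L_\diamond,q}(\id u)v=u\diamond\partial(v)=u\rhd_q v$ and $u\lhd v=r_{L_\diamond,q}(\id v)u=\partial(v)\diamond u=u\lhd_q v$. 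Hence $(A,\lhd_q,\rhd_q)$ is a pre-Novikov algebra, as claimed.

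The only nontrivial point is the admissibility identity for $\cdot$ in the first step, and even that becomes routine once one is careful about which form of the Zinbiel admissibility relation is applied to $Q(a\diamond b)$ and which to $Q(b\diamond a)$. A fully self-contained alternative is to verify the axioms \eqref{ND1}--\eqref{ND4} directly, expanding both sides of each with the Zinbiel identity and the relations $D(x\diamond y)=D(x)\diamond y+x\diamond D(y)$ and $Q(x\diamond y)=Q(x)\diamond y-x\diamond D(y)=x\diamond Q(y)-D(x)\diamond y$; this is longer but entirely mechanical, and the conceptual argument above explains why it is bound to succeed.
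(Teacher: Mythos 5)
Your proof is correct, but it takes a genuinely different route from the paper's. The paper proves Proposition~\ref{pre-Nov-constr2} by direct computation: it expands Eq.~(\ref{ND1}) using the Zinbiel identity together with the derivation property of $D$ and the two forms of the admissibility relation for $Q$, shows the expression vanishes, and notes that Eqs.~(\ref{ND2})--(\ref{ND4}) are checked similarly. You instead factor the statement through the $\mathcal{O}$-operator formalism: you verify that the symmetrization $(A,\cdot,D,Q)$ is an admissible commutative differential algebra, that $(A,L_\diamond,D,Q)$ is a representation of it with $\id$ an $\mathcal{O}$-operator (this is the content of Lemma~\ref{diff-zinb-algebra-ndend-2}, which the paper states only later and leaves as ``straightforward,'' but you supply the verifications yourself, so there is no circularity), and then chain Propositions~\ref{constr2}, \ref{adm-reprent}, \ref{corr-O-operator-2} and \ref{ndend-o-oper}(2) to land exactly on the operations $\lhd_q$ and $\rhd_q$ of Eq.~(\ref{constr-ndend-2}). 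This is precisely the commutativity content that the paper records separately in Proposition~\ref{pp:preoop}, so your argument in effect merges Proposition~\ref{pre-Nov-constr2} with that later compatibility statement. What your route buys is conceptual economy --- the pre-Novikov axioms never have to be expanded --- at the cost of depending on the representation-theoretic scaffolding; the paper's direct computation is self-contained and keeps Proposition~\ref{pre-Nov-constr2} independent of the $\mathcal{O}$-operator results, which is why the paper can later present the coincidence of the two constructions as a separate theorem rather than a tautology.
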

\begin{proof}
Let $a$, $b$, $c\in A$ and $u=(D+qQ)c$. We have
{\wuhao\begin{eqnarray*}
&&a\rhd_q (b\rhd_q c)-(a\rhd_q b+a\lhd_q b)\rhd_q c-b\rhd_q(a\rhd_q c)+(b\rhd_q a+b\lhd_q a)\rhd_q c\\
&&\quad=a\diamond (D+qQ)(b\diamond (D+qQ)c)-(a\diamond (D+qQ)b+(D+qQ)(b)\diamond a)\diamond (D+qQ)c\\
&&\qquad-b\diamond (D+qQ)(a\diamond (D+qQ)c)+(b\diamond (D+qQ)a+(D+qQ)(a)\diamond b)\diamond (D+qQ)c\\
&&\quad=a\diamond (D+qQ)(b\diamond u)-(a\diamond (D+qQ)b+(D+qQ)(b)\diamond a)\diamond u\\
&&\qquad-b\diamond (D+qQ)(a\diamond u)+(b\diamond (D+qQ)a+(D+qQ)(a)\diamond b)\diamond u\\
&&\quad= a\diamond (D(b)\diamond u+b\diamond D(u)+qQ(b)\diamond u-qb\diamond D(u))-(a\diamond D(b)+qa\diamond Q(b)+D(b)\diamond a\\
&&\qquad+qQ(b)\diamond a)\diamond u-b\diamond (D(a)\diamond u+a\diamond D(u)+qQ(a)\diamond u-qa\diamond D(u))\\
&&\qquad+(b\diamond D(a)+qb\diamond Q(a)+D(a)\diamond b+qQ(a)\diamond b)\diamond u\\
&&\quad=(a\diamond (D(b)\diamond u)-(a\diamond D(b)+D(b)\diamond a)\diamond u)+q (a\diamond(Q(b)\diamond u)-(a\diamond Q(b)+Q(b)\diamond a)\diamond u)\\
&&\qquad-(b\diamond (D(a)\diamond u)-(b\diamond D(a)+D(a)\diamond b)\diamond u)-q (b\diamond (Q(a)\diamond u)-(b\diamond Q(a)+Q(a)\diamond b)\diamond u)\\
&&\qquad+(1-q)(a\diamond (b\diamond D(u))-b\diamond (a\diamond D(u)))\\
&&\quad=0.
\end{eqnarray*}}
Therefore, Eq. (\mref{ND1}) holds. Eqs. (\mref{ND2})--(\mref{ND4}) can be similarly checked.
\end{proof}

\begin{rmk}
Note that $(A,\lhd_0,\rhd_0)$ is a pre-Novikov algebra, as
proved in \mcite{HBG}. Moreover, $(A,\lhd_q,\rhd_q)$ can be
regarded as a deformation of $(A,\lhd_0,\rhd_0)$, in the sense of
the ordinary deformation theory.
\end{rmk}

\begin{lem}\mlabel{diff-zinb-algebra-ndend-2}
\begin{enumerate}
\item Let $(A, \diamond, D, Q)$ be an admissible differential Zinbiel algebra. Then there is an admissible commutative differential algebra $(A, \cdot, D, Q)$ with the binary operation defined by
\begin{eqnarray}\mlabel{diff-zinb-algebra-ndend-3}
a\cdot b:=a\diamond b+b\diamond a, \;\;\;\;a, b\in A,
\end{eqnarray}
which is called the {\bf descendent admissible commutative differential algebra of $(A, \diamond, D, Q)$}. Moreover, $(A, L_{ \diamond}, D, Q)$ is a representation of the descendent commutative differential algebra $(A, \cdot, D, Q)$ and the identity map $\id: A\rightarrow A$ is an $\mathcal{O}$-operator on $(A, \cdot, D, Q)$ associated to $(A, L_{\diamond}, D, Q)$.
\item Let $(A, \cdot, D, Q)$ be an admissible commutative differential algebra, $(V, l_A, \alpha, \beta)$ be a representation of $(A, \cdot, D, Q)$ and $T$ be an $\mathcal{O}$-operator on $(A, \cdot, D, Q)$ associated to $(V, l_A, \alpha, \beta)$. Then there exists an admissible differential Zinbiel algebra $(V, \diamond, \alpha, \beta)$ on $V$ with the binary operation defined by
\vspb
\begin{eqnarray}\mlabel{diff-zinb-algebra-ndend-4}
  u\diamond v:=l_A(T(u))v,\;\;\; \;\;u, v\in V.
\end{eqnarray}
\end{enumerate}
\end{lem}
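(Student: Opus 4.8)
The plan is to prove both parts by direct computation, organized around the classical fact that the symmetrization $a\cdot b=a\diamond b+b\diamond a$ of a Zinbiel algebra is commutative associative, together with the observation that in each direction the two operator conditions (for $D$ and for $Q$) transfer cleanly between the Zinbiel structure and its symmetrization. In effect this is the differential-and-admissible enhancement of Proposition~\mref{ndend-o-oper}, and the argument runs in parallel to that one.

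For part (1), I would first recall (or cite) that $(A,\cdot)$ defined by Eq.~\meqref{diff-zinb-algebra-ndend-3} is commutative associative, that $(A,L_\diamond)$ is a representation of $(A,\cdot)$ via the Zinbiel identity $a\diamond(b\diamond v)=(a\diamond b)\diamond v+(b\diamond a)\diamond v=(a\cdot b)\diamond v$, and that $\id$ is an $\mathcal O$-operator for this representation since $u\cdot v=u\diamond v+v\diamond u=L_\diamond(u)v+L_\diamond(v)u$. Next, $D$ is a derivation on $(A,\cdot)$ because it is one on $(A,\diamond)$, and it is a derivation on the module $(A,L_\diamond)$ precisely by the Leibniz rule for $\diamond$. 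For the admissibility of $Q$ relative to $(A,\cdot,D)$, I would add the two forms of the Zinbiel admissibility identity, $Q(a\diamond b)=Q(a)\diamond b-a\diamond D(b)$ and $Q(b\diamond a)=b\diamond Q(a)-D(b)\diamond a$, to obtain exactly $Q(a\cdot b)=Q(a)\cdot b-a\cdot D(b)$. The module conditions for $\beta=Q$ in the quadruple $(A,L_\diamond,D,Q)$, namely $Q(a\diamond v)=a\diamond Q(v)-D(a)\diamond v=Q(a)\diamond v-a\diamond D(v)$, are literally the two forms of the admissibility identity, and $Q\circ\id=\id\circ Q$ is trivial, so $\id$ is an $\mathcal O$-operator on $(A,\cdot,D,Q)$ associated to $(A,L_\diamond,D,Q)$.

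For part (2), set $u\diamond v:=l_A(T(u))v$ as in Eq.~\meqref{diff-zinb-algebra-ndend-4} and run the analogue of Proposition~\mref{ndend-o-oper}(2). Using that $l_A$ is an algebra representation of $(A,\cdot)$ and that $T$ is an $\mathcal O$-operator, one gets $u\diamond(v\diamond w)=l_A(T(u)\cdot T(v))w$ while $(u\diamond v)\diamond w+(v\diamond u)\diamond w=l_A(T(v)\cdot T(u))w$, so commutativity of $\cdot$ yields the Zinbiel identity. That $\alpha$ is a derivation on $(V,\diamond)$ follows from the differential-module relation $\alpha(l_A(a)v)=l_A(D(a))v+l_A(a)\alpha(v)$ applied at $a=T(u)$ together with $D\circ T=T\circ\alpha$. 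Finally, applying the two $\beta$-relations of the representation at $a=T(u)$ and using $Q\circ T=T\circ\beta$ and $D\circ T=T\circ\alpha$ gives $\beta(u\diamond v)=u\diamond\beta(v)-\alpha(u)\diamond v=\beta(u)\diamond v-u\diamond\alpha(v)$, which is precisely the admissibility condition for $(V,\diamond,\alpha,\beta)$.

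I do not expect a serious obstacle; the computations are routine and mirror the pre-Novikov case. The only point requiring a little care is the bookkeeping of the two equivalent forms of each admissibility identity — one with the operator acting on the left argument, one on the right, differing by commutativity — and checking that \emph{both} forms come out correctly, so that $(A,\diamond,D,Q)$ in part (1) and $(V,\diamond,\alpha,\beta)$ in part (2) genuinely satisfy the full defining double identity of an admissible differential Zinbiel algebra rather than merely one half of it.
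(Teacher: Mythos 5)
Your proposal is correct, and it fills in exactly the direct verification the paper leaves out (the paper's proof of this lemma is just ``It is straightforward''): symmetrize the Zinbiel product, transfer the Leibniz and admissibility identities between $\diamond$ and $\cdot$ (resp.\ between the representation data and the induced Zinbiel structure), and check the $\mathcal O$-operator conditions. All the individual computations you outline check out, including the point you flag about verifying both halves of the double admissibility identity.
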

\begin{proof}
It is straightforward.
\end{proof}

\begin{pro}\mlabel{algebra-comm-2} Let $(A, \diamond, D, Q)$
be an admissible differential Zinbiel algebra and $(A, \cdot, D,
Q)$ be the descendent admissible commutative differential algebra
of $(A, \diamond, D, Q)$, with $\cdot$ defined by Eq.
\meqref{diff-zinb-algebra-ndend-3}. Let $(A, \lhd_q, \rhd_q)$ be
the pre-Novikov algebra induced from $(A, \diamond, D, Q)$, where
$\lhd_q, \rhd_q$ are defined by Eq. \meqref{constr-ndend-2}. Then
the descendent Novikov algebra $(A, \circ_q')$ of $(A, \lhd_q,
\rhd_q)$ is the same as the Novikov algebra $(A, \circ_q)$ induced
from $(A, \cdot, D, Q)$.
\end{pro}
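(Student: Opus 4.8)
\end{pro}

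\begin{proof}
The plan is to verify the coincidence of the two Novikov products by directly unwinding the definitions involved. By Lemma~\mref{diff-zinb-algebra-ndend-2}, the pair $(A,\cdot,D,Q)$ with $\cdot$ given by Eq.~\meqref{diff-zinb-algebra-ndend-3} is an admissible commutative differential algebra, so the induced Novikov product $\circ_q$ of Eq.~\meqref{eq:k} is defined; and by Proposition~\mref{pre-Nov-constr2}, $(A,\lhd_q,\rhd_q)$ is a pre-Novikov algebra, so its descendent Novikov product $\circ_q'$ obtained via Eq.~\meqref{ND5} is defined. It therefore suffices to compare the two binary operations on an arbitrary pair $a,b\in A$.

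First I would expand $\circ_q'$ using Eq.~\meqref{ND5} followed by the definitions in Eq.~\meqref{constr-ndend-2}:
\[
a\circ_q' b = a\lhd_q b + a\rhd_q b = (D+qQ)(b)\diamond a + a\diamond (D+qQ)(b).
\]
Applying the definition of $\cdot$ in Eq.~\meqref{diff-zinb-algebra-ndend-3} with the arguments $a$ and $(D+qQ)(b)$, the right-hand side becomes $a\cdot (D+qQ)(b)$, which is exactly $a\circ_q b$ by Eq.~\meqref{eq:k}. Hence $\circ_q'=\circ_q$.

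There is no genuine obstacle here: the verification is a one-line substitution. The only points requiring a little care are keeping the order of the two slots of the Zinbiel product $\diamond$ straight when passing to the symmetrized product $\cdot$, and noting that the same operator combination $D+qQ$ appears in the formulas for $\lhd_q$, $\rhd_q$, and $\circ_q$; in particular no compatibility condition (such as the admissibility of $Q$) is actually invoked for this identity, as that data enters only through guaranteeing that the source structures are of the asserted types.
\end{proof}
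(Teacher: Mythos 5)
Your proposal is correct and is essentially identical to the paper's own proof: both unwind $a\circ_q' b = a\lhd_q b + a\rhd_q b = (D+qQ)(b)\diamond a + a\diamond (D+qQ)(b) = a\cdot(D+qQ)(b) = a\circ_q b$ directly from the definitions. Your added remark that the admissibility of $Q$ is not invoked in the identity itself (only in guaranteeing the source structures exist) is accurate.
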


\begin{proof}
For $a$, $b\in A$, we have
\vspb
\begin{eqnarray*}
a\circ_q' b=a\lhd_q b+a\rhd_q b=(D+qQ)(b)\diamond a+a\diamond
(D+qQ)(b)=a\cdot (D+qQ)(b)=a\circ_q b.
\end{eqnarray*}
This proves the conclusion.
\end{proof}

\begin{pro}\mlabel{O-oper-comm-2}
Let $(A, \cdot, D, Q)$ be an admissible commutative differential
algebra and $(A, \circ_{q})$ be the Novikov algebra induced from
$(A, \cdot, D, Q)$. Suppose that $T$ is an $\mathcal{O}$-operator
on $(A, \cdot, D, Q)$ associated to a representation $(V, l_A,
\alpha, \beta)$, and $(V, \diamond, \alpha, \beta)$ is the
admissible differential Zinbiel algebra defined by Eq.
\meqref{diff-zinb-algebra-ndend-4}. Then the pre-Novikov algebra
structure on $V$ obtained by the $\mathcal{O}$-operator on $(A,
\circ_{q})$ associated to the representation $(V,
l_{l_A,q}, r_{l_A,q})$ through Eq.
\meqref{eq:ndend} is exactly the one obtained from the
differential Zinbiel algebra $(V, \diamond, \alpha, \beta)$
through Eq. \meqref{constr-ndend-2}.
\end{pro}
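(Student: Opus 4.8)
The plan is to write down both pre-Novikov products on $V$ explicitly and observe that they coincide term by term.

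First I would invoke Proposition~\ref{corr-O-operator-2}: since $T$ is an $\mathcal{O}$-operator on $(A,\cdot,D,Q)$ associated to $(V,l_A,\alpha,\beta)$, it is an $\mathcal{O}$-operator on the induced Novikov algebra $(A,\circ_q)$ associated to $(V, l_{l_A,q}, r_{l_A,q})$. Feeding this into Eq.~\eqref{eq:ndend} of Proposition~\ref{ndend-o-oper}(2) and unwinding the formulas for $l_{l_A,q}$ and $r_{l_A,q}$ from Proposition~\ref{adm-reprent}, the resulting pre-Novikov structure on $V$ is
\[
u\rhd v = l_{l_A,q}(T(u))v = l_A(T(u))(\alpha+q\beta)v, \qquad
u\lhd v = r_{l_A,q}(T(v))u = l_A\big((D+qQ)T(v)\big)u, \quad u,v\in V.
\]

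Next I would compute the other structure. By Lemma~\ref{diff-zinb-algebra-ndend-2}, $(V,\diamond,\alpha,\beta)$ with $u\diamond v=l_A(T(u))v$ is an admissible differential Zinbiel algebra, in which $\alpha$ plays the role of the derivation and $\beta$ the role of the admissible operator. Substituting these into Eq.~\eqref{constr-ndend-2} gives
\[
u\rhd_q v = u\diamond\big((\alpha+q\beta)v\big) = l_A(T(u))(\alpha+q\beta)v, \qquad
u\lhd_q v = \big((\alpha+q\beta)v\big)\diamond u = l_A\big(T((\alpha+q\beta)v)\big)u.
\]
The $\rhd$-products agree on the nose. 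For the $\lhd$-products it remains to identify $(D+qQ)T(v)$ with $T((\alpha+q\beta)v)$, and this is exactly the content of the intertwining relations $DT=T\alpha$ and $QT=T\beta$ that define an $\mathcal{O}$-operator on $(A,\cdot,D,Q)$, whence $(D+qQ)T=T(\alpha+q\beta)$. Therefore the two pre-Novikov algebra structures on $V$ coincide. There is essentially no obstacle here: the proof is a short bookkeeping computation, the only point demanding care being to remember that in the Zinbiel algebra $(V,\diamond,\alpha,\beta)$ it is the pair $(\alpha,\beta)$---not $(D,Q)$---that is inserted into Eq.~\eqref{constr-ndend-2}, with consistency guaranteed precisely by the relations $DT=T\alpha$ and $QT=T\beta$.
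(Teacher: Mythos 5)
Your proof is correct and follows essentially the same route as the paper: both unwind the two pre-Novikov products via the explicit formulas for $l_{l_A,q}$, $r_{l_A,q}$ and the Zinbiel product $u\diamond v=l_A(T(u))v$, and both identify the two $\lhd$-products using the intertwining relations $DT=T\alpha$ and $QT=T\beta$.
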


\begin{proof}
The pre-Novikov algebra structure on $V$ obtained by the $\mathcal{O}$-operator of $(A, \circ_{q})$ associated to the representation $(V, l_{l_A,q}, r_{l_A,q})$ through Eq. (\mref{eq:ndend}) is given by
\begin{eqnarray*}
&&u \rhd_q v=l_{l_A,q}(T(u))v=l_A(T(u))(\alpha+q\beta)(v)=u\diamond (\alpha+q\beta)(v),\\
&& u\lhd_q v=r_{l_A,q}(T(v))u=l_A((D+qQ)(T(v)))u=l_A(T((\alpha+q\beta)v))u=(\alpha+q\beta)(v)\diamond u, \quad u, v\in V.
\end{eqnarray*}
This yields the conclusion.
\end{proof}

Let $(A, \diamond, D, Q)$ be an admissible differential
Zinbiel algebra and $q$ be in $\bfk$.
Let $(A, \cdot, D, Q)$ be the descendent admissible commutative
differential algebra. On the one hand, by
Proposition~\mref{diff-zinb-algebra-ndend-2}, the identity map
${\rm id}$ is an $\mathcal{O}$-operator on $(A, \cdot, D, Q)$
associated to $(A, L_{\diamond}, D, Q)$.  Let $(A, \circ_{q})$ be
the Novikov algebra induced from $(A, \cdot, D, Q)$ and thus by
Proposition \mref{corr-O-operator-2}, $\id$ is an $\calo$-operator
on $(A, \circ_{q})$ associated to $(A,l_{
L_{\diamond},q}, r_{L_{ \diamond},q})$. On the other
hand, let $(A, \lhd_q, \rhd_q)$ be the pre-Novikov algebra that is
induced from $(A, \diamond, D, Q)$ through  Eq.
\meqref{constr-ndend-2}. By Proposition~\mref{algebra-comm-2}, the
 descendent Novikov algebra of $(A, \lhd_q, \rhd_q)$ is still
$(A,\circ_q)$. Hence by Proposition \mref{ndend-o-oper}, $\id$ is
an $\mathcal{O}$-operator on $(A, \circ_{q})$ associated to $(A,
L_{\rhd_q}, R_{\lhd_q})$.

\begin{pro} Let $(A, \diamond, D, Q)$ be an admissible differential Zinbiel algebra and $q$ be in $\bfk$. With the notations given above, we have
\vspb
$$l_{
L_{\diamond},q}=L_{\rhd_q}, \;\; r_{L_{
\diamond},q}=R_{\lhd_q}.$$ That is, the two approaches to get the
identity map $\id$ as the $\mathcal O$-operators on the Novikov
algebra $(A, \circ_{q})$ coincide, as depicted in the following
commutative diagram.
\vspb
 \begin{displaymath}
\xymatrix{
\txt{\tiny $(A, \diamond, D, Q)$\\ \tiny an admissible differential \\ \tiny Zinbiel algebra } \ar[rr]^-{\rm Lem. \mref{diff-zinb-algebra-ndend-2}}   \ar[d]_-{\rm Prop. \mref{pre-Nov-constr2}}&& \txt{ \tiny $\id$\\ \tiny an $\mathcal{O}$-operator on $(A, \cdot, D, Q)$ \\ \tiny associated to $(A, L_{\diamond}, D, Q)$  } \ar[d]_-{\rm Prop. \mref{corr-O-operator-2}}\\
\txt{\tiny $(A, \lhd_q, \rhd_q)$\\ \tiny a pre-Novikov algebra }
\ar[rr]^-{\rm Prop. \mref{ndend-o-oper}}
            && \txt{\tiny $\id$\\ \tiny an $\mathcal{O}$-operator on $(A, \circ_{q})$ \\ \tiny associated to $(A, l_{L_{ \diamond},q}, r_{L_{\diamond},q})=(A,
L_{\rhd_q}, R_{\lhd_q})$ }}
\end{displaymath}
\mlabel{pp:preoop}
\vspb
\end{pro}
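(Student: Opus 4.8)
The plan is to prove the two operator identities $l_{L_{\diamond},q}=L_{\rhd_q}$ and $r_{L_{\diamond},q}=R_{\lhd_q}$ by directly unfolding all the definitions involved; the commutativity of the diagram is then a purely formal consequence of the propositions already labelling its arrows. Everything reduces to comparing two explicit formulas, each of which is a single substitution.

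First I would specialise Proposition~\mref{adm-reprent} to $V=A$, $l_A=L_{\diamond}$, $\alpha=D$, $\beta=Q$; this is legitimate because, by Lemma~\mref{diff-zinb-algebra-ndend-2}, $(A,L_{\diamond},D,Q)$ is a representation of the descendent admissible commutative differential algebra $(A,\cdot,D,Q)$. The resulting maps $l_{L_{\diamond},q},r_{L_{\diamond},q}:A\to\End_{\bf k}(A)$ are then, for all $a,b\in A$,
\begin{eqnarray*}
l_{L_{\diamond},q}(a)b=L_{\diamond}(a)(D+qQ)b=a\diamond(D+qQ)b,\qquad
r_{L_{\diamond},q}(a)b=L_{\diamond}\big((D+qQ)a\big)b=(D+qQ)(a)\diamond b.
\end{eqnarray*}
On the other hand, the pre-Novikov algebra $(A,\lhd_q,\rhd_q)$ induced from $(A,\diamond,D,Q)$ through Eq.~\meqref{constr-ndend-2} has $a\rhd_q b=a\diamond(D+qQ)b$ and $a\lhd_q b=(D+qQ)(b)\diamond a$, so with the notation $L_{\rhd_q}(a)b=a\rhd_q b$ and $R_{\lhd_q}(a)b=b\lhd_q a$ one gets $L_{\rhd_q}(a)b=a\diamond(D+qQ)b$ and $R_{\lhd_q}(a)b=(D+qQ)(a)\diamond b$. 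Comparing with the displayed formulas yields $l_{L_{\diamond},q}=L_{\rhd_q}$ and $r_{L_{\diamond},q}=R_{\lhd_q}$.

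For the commutativity of the diagram: along the top-then-right route, Lemma~\mref{diff-zinb-algebra-ndend-2} makes $\id$ an $\calo$-operator on $(A,\cdot,D,Q)$ associated to $(A,L_{\diamond},D,Q)$, and Proposition~\mref{corr-O-operator-2} promotes it to an $\calo$-operator on $(A,\circ_q)$ associated to $(A,l_{L_{\diamond},q},r_{L_{\diamond},q})$; along the left-then-bottom route, Proposition~\mref{pre-Nov-constr2} produces $(A,\lhd_q,\rhd_q)$, whose descendent Novikov algebra is $(A,\circ_q)$ by Proposition~\mref{algebra-comm-2}, and Proposition~\mref{ndend-o-oper} makes $\id$ an $\calo$-operator on $(A,\circ_q)$ associated to $(A,L_{\rhd_q},R_{\lhd_q})$. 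By the identities just established, these two representations of $(A,\circ_q)$ are literally the same, hence so are the two $\calo$-operator data, and the square commutes.

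Since each step is a one-line substitution, there is no genuine obstacle here; the only thing requiring care is bookkeeping — checking that it is the assignment $(\alpha,\beta)=(D,Q)$, rather than the swapped one, that is used when specialising Proposition~\mref{adm-reprent}, and that the roles of $L$ and $R$ in Proposition~\mref{ndend-o-oper} are correctly matched against $\rhd_q$ and $\lhd_q$ (recall $L_{\rhd_q}(a)b=a\rhd_q b$ but $R_{\lhd_q}(a)b=b\lhd_q a$), so that both routes land on the same ordered pair of linear maps.
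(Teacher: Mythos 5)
Your proposal is correct and follows essentially the same route as the paper: both arguments simply unfold the definitions of $l_{L_{\diamond},q}$, $r_{L_{\diamond},q}$ from Proposition~\ref{adm-reprent} and of $\lhd_q$, $\rhd_q$ from Eq.~\eqref{constr-ndend-2} to see that the two ordered pairs of maps coincide, after which the commutativity of the diagram is formal. Your extra bookkeeping (invoking Proposition~\ref{algebra-comm-2} to identify the descendent Novikov algebra with $(A,\circ_q)$) matches the discussion the paper places just before the statement.
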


\begin{proof}
Let $a$, $b\in A$. Then we obtain
\begin{eqnarray*}
l_{
L_{\diamond},q}(a)(b)=L_{\diamond}(a)((D+qQ)b)=a\diamond (D+qQ)b=L_{\rhd_q}(a)(b), \\
 r_{L_{
\diamond},q}(a)(b)=L_{\diamond}((D+qQ)a)(b)=(D+qQ)(a)\diamond b=R_{\lhd_q}(a)(b).
\end{eqnarray*}
Hence, $l_{
L_{\diamond},q}=L_{\rhd_q}$, and $ r_{L_{
\diamond},q}=R_{\lhd_q}$. Then the conclusion follows.
\end{proof}

Combining the above results, we obtain the following conclusion that applies admissible differential Zinbiel algebras to construct Novikov bialgebras. To keep track of the  related objects and maps, we use the following diagram,  obtained by merging the diagrams from Propositions~\mref{pp:rmatbialg}, \mref{pro:o-ybe} and \mref{pp:preoop}.
\vspb
\begin{equation}
    \begin{split}
    \xymatrix{
\txt{\tiny $(A, \diamond, D, Q)$\\ \tiny an admissible differential \\ \tiny Zinbiel algebra } \ar[rrr]^-{\txt{\tiny Lems. \mref{diff-zinb-algebra-ndend-2}, \mref{diff-coboundary}}~~~and~~~Prop. \mref{O-Oper-2}}   \ar[d]_-{\txt{\tiny Prop.
\mref{pre-Nov-constr2}}}&&& \txt{ \tiny $(A\ltimes_{-L_{\diamond}^\ast}A^\ast, \cdot, \delta, D+Q^\ast,
Q+D^\ast)$ \\ \tiny \;\; a commutative and
cocommutative \\ \tiny differential ASI bialgebra } \ar[d]_-{\txt{\tiny Prop. \mref{prostr1}}}^-{\txt{\tiny Thm. \mref{bialgebra-constr}}}\\
\txt{\tiny $(A, \lhd_q, \rhd_q)$\\ \tiny a pre-Novikov algebra }
\ar[rrr]^-{\txt{\tiny Props. \mref{NYB-ND}~~~and~~~\mref{Nov-coboundary}}}
&&& \txt{\tiny $(A\ltimes_{L_{\rhd_q}^\ast+R_{\lhd_q}^\ast,
-R_{\lhd_q}^\ast}A^\ast, \circ, \Delta)\stackrel{?}{=}(A\oplus A^\ast, \circ_q, \Delta_q)$ \\ \tiny a Novikov bialgebra  }
            }
    \end{split}
    \mlabel{di:zinb-bialg}
\end{equation}

\begin{thm}\mlabel{Constr-Nov-diff-Zinb-2}
Let $(A, \diamond, D, Q)$ be an admissible differential Zinbiel
algebra and $(A, \cdot, D, Q)$ be the descendent admissible
commutative differential algebra.
\begin{enumerate}
\item Suppose that $(A, \lhd_q,
\rhd_q)$ is the pre-Novikov algebra obtained from $(A, \diamond,
D, Q)$ through Eq. \meqref{constr-ndend-2}, and $(A, \circ_{q})$ is the Novikov algebra induced from $(A, \cdot, D, Q)$. Further let $\{e_1,
\ldots, e_n\}$ be a basis of $A$, $\{e_1^\ast, \ldots, e_n^\ast\}$ be the dual basis of $A^\ast$ and
\begin{eqnarray}
r:=\sum_{i=1}^n(e_i\otimes e_i^\ast-e_i^\ast\otimes e_i).
\end{eqnarray}
Then there is a Novikov bialgebra
$(A\ltimes_{L_{\rhd_q}^\ast+R_{\lhd_q}^\ast,
-R_{\lhd_q}^\ast}A^\ast, \circ, \Delta)$ where $\Delta$ is given
by Eq. \meqref{co1}.
\mlabel{it:zinb-bialg1}
\item There is a commutative and
cocommutative differential ASI bialgebra
$(A\ltimes_{-L_{\diamond}^\ast}A^\ast, \cdot, \delta, D+Q^\ast,
Q+D^\ast)$, where $\delta$ is defined by $-r$ through
Eq.~(\ref{cobass1}).
\mlabel{it:zinb-bialg2}
\item \mlabel{it:zinb-bialg3} If $q=-\frac{1}{2}$ or $Q$ is a derivation on $(A, \diamond)$,
then there is a Novikov bialgebra $(A\oplus A^\ast, \circ_q, \Delta_q)$ obtained from $(A\ltimes_{-L_{\diamond}^\ast}A^\ast, \cdot, \delta, D+Q^\ast,
Q+D^\ast)$ through Eqs. \meqref{eq:k} and
\meqref{eq:cons2}, which coincides with
$(A\ltimes_{L_{\rhd_q}^\ast+R_{\lhd_q}^\ast,
-R_{\lhd_q}^\ast}A^\ast, \circ, \Delta)$.
In other words, Diagram~\meqref{di:zinb-bialg} commutes.
\end{enumerate}
\end{thm}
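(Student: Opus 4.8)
The plan is to read all three statements off the chain of constructions already established in Sections~\ref{s:bialg} and~\ref{s:ybe}, so that the proof is essentially a chase around Diagram~\eqref{di:zinb-bialg}. For part~\eqref{it:zinb-bialg1} I would run down the left edge and then across the bottom: Proposition~\ref{pre-Nov-constr2} makes $(A,\lhd_q,\rhd_q)$ a pre-Novikov algebra; Proposition~\ref{NYB-ND}, applied to this pre-Novikov algebra and its descendent Novikov algebra, shows that $r=\sum_{i=1}^n(e_i\otimes e_i^\ast-e_i^\ast\otimes e_i)$ is an antisymmetric solution of the NYBE in $A\ltimes_{L_{\rhd_q}^\ast+R_{\lhd_q}^\ast,-R_{\lhd_q}^\ast}A^\ast$; and Proposition~\ref{Nov-coboundary} then turns this solution into a Novikov bialgebra with $\Delta$ given by Eq.~\eqref{co1}. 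For part~\eqref{it:zinb-bialg2} I would run along the top edge: by Lemma~\ref{diff-zinb-algebra-ndend-2}, $(A,\cdot,D,Q)$ is the descendent admissible commutative differential algebra, $(A,L_\diamond,D,Q)$ is one of its representations, and $\id$ is an $\mathcal{O}$-operator on $(A,\cdot,D,Q)$ associated to it; Proposition~\ref{O-Oper-2} then shows that $r=r_{\id}-\tau r_{\id}$, which is exactly the $r$ above, is an antisymmetric solution of the admissible AYBE in $(A\ltimes_{-L_\diamond^\ast}A^\ast,\cdot,D+Q^\ast,Q+D^\ast)$; as $-r$ is then also such a solution, Lemma~\ref{diff-coboundary} produces the commutative and cocommutative differential ASI bialgebra $(A\ltimes_{-L_\diamond^\ast}A^\ast,\cdot,\delta,D+Q^\ast,Q+D^\ast)$ with $\delta=\delta_{-r}$ given by Eq.~\eqref{cobass1}. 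Neither~\eqref{it:zinb-bialg1} nor~\eqref{it:zinb-bialg2} needs any hypothesis on $q$.

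For the existence claim in part~\eqref{it:zinb-bialg3} I would apply Theorem~\ref{bialgebra-constr} to the ASI bialgebra of part~\eqref{it:zinb-bialg2}. If $q=-\frac{1}{2}$ this is Theorem~\ref{bialgebra-constr}\eqref{it:const2} and is unconditional. If instead $Q$ is a derivation on $(A,\diamond)$, I would first record the annihilation identities $a\diamond(D+Q)(b)=0=(D+Q)(a)\diamond b$ for all $a,b\in A$, which follow by comparing the Leibniz rule for $Q$ with the two admissibility relations defining $(A,\diamond,D,Q)$; a short computation with the semidirect product then shows that $Q+D^\ast$ is a derivation on $(A\ltimes_{-L_\diamond^\ast}A^\ast,\cdot)$ and, dually, $D+Q^\ast$ is a coderivation on $(A\ltimes_{-L_\diamond^\ast}A^\ast,\delta)$, i.e.\ Eqs.~\eqref{eq:conda} and~\eqref{eq:condb} hold for the ASI bialgebra of part~\eqref{it:zinb-bialg2}; hence Theorem~\ref{bialgebra-constr}\eqref{it:const1} produces the Novikov bialgebra $(A\oplus A^\ast,\circ_q,\Delta_q)$. (Alternatively, these same identities are Eqs.~\eqref{eq:cond2} and~\eqref{eq:condd2} for the representation $(A,L_\diamond,D,Q)$, so one may route existence through the commutativity of the diagram in Proposition~\ref{pro:o-ybe} together with Proposition~\ref{Nov-coboundary}.)

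For the coincidence claim I would identify the two Novikov bialgebra structures on $A\oplus A^\ast$ first as algebras, then as coalgebras. As algebras: Proposition~\ref{algebra-comm-2} identifies the descendent Novikov algebra of $(A,\lhd_q,\rhd_q)$ with the induced Novikov algebra $(A,\circ_q)$; Proposition~\ref{pp:preoop} gives $l_{L_\diamond,q}=L_{\rhd_q}$ and $r_{L_\diamond,q}=R_{\lhd_q}$; and Proposition~\ref{pro:o-ybe} gives $l_{L_\diamond,q}^\ast+r_{L_\diamond,q}^\ast=l_{-L_\diamond^\ast,q}$ and $-r_{L_\diamond,q}^\ast=r_{-L_\diamond^\ast,q}$ (valid because $q=-\frac{1}{2}$ or Eq.~\eqref{eq:cond2} holds). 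Combining these, $A\ltimes_{L_{\rhd_q}^\ast+R_{\lhd_q}^\ast,-R_{\lhd_q}^\ast}A^\ast=A\ltimes_{l_{-L_\diamond^\ast,q},r_{-L_\diamond^\ast,q}}A^\ast$, which by Proposition~\ref{pro:semi-direct} is precisely the Novikov algebra $(A\oplus A^\ast,\circ_q)$ induced from $(A\ltimes_{-L_\diamond^\ast}A^\ast,\cdot,D+Q^\ast,Q+D^\ast)$. As coalgebras: I would apply Proposition~\ref{pp:rmatbialg} with ambient admissible commutative differential algebra $(A\ltimes_{-L_\diamond^\ast}A^\ast,\cdot,D+Q^\ast,Q+D^\ast)$ and antisymmetric solution $r$; it tells us that the coproduct produced from $\delta=\delta_{-r}$ via Eq.~\eqref{eq:cons2} agrees with the coproduct produced from $r$ via Eq.~\eqref{co1} in the Novikov algebra $(A\oplus A^\ast,\circ_q)$, and by the algebra identification just made the latter coproduct is exactly the $\Delta$ of part~\eqref{it:zinb-bialg1}. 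Putting the two identifications together gives $(A\oplus A^\ast,\circ_q,\Delta_q)=(A\ltimes_{L_{\rhd_q}^\ast+R_{\lhd_q}^\ast,-R_{\lhd_q}^\ast}A^\ast,\circ,\Delta)$, i.e.\ Diagram~\eqref{di:zinb-bialg} commutes.

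The one step I expect to require genuine work is the implication ``$Q$ a derivation on $(A,\diamond)$'' $\Rightarrow$ the hypotheses of Theorem~\ref{bialgebra-constr} (and of Proposition~\ref{pro:o-ybe}) for the semidirect-product ASI bialgebra, i.e.\ verifying that the annihilation identities above really do force Eqs.~\eqref{eq:conda}, \eqref{eq:condb}, \eqref{eq:cond2}, \eqref{eq:condd2} on $A\ltimes_{-L_\diamond^\ast}A^\ast$. Everything else is bookkeeping, stacking the already-proved propositions along the two legs of the square and reading off the common value.
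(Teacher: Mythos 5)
Your proposal is correct and follows essentially the same route as the paper: parts (a) and (b) are read off the same chains of propositions, and part (c) reduces, exactly as in the paper, to checking that ``$Q$ is a derivation on $(A,\diamond)$'' forces the required (co)derivation conditions on the semidirect product $(A\ltimes_{-L_{\diamond}^\ast}A^\ast,\cdot,\delta,D+Q^\ast,Q+D^\ast)$, after which the coincidence follows from the merged commutative diagrams. Your explicit appeal to Propositions~\mref{algebra-comm-2}, \mref{pp:preoop}, \mref{pro:o-ybe} and \mref{pp:rmatbialg} for the identification of the two bialgebra structures merely spells out what the paper's terse citation of Proposition~\mref{pro:o-ybe} and Theorem~\mref{bialgebra-constr} leaves implicit.
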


\begin{proof}
Item (\mref{it:zinb-bialg1}) follows directly by Propositions \mref{pre-Nov-constr2}, \mref{NYB-ND} and \mref{Nov-coboundary}.

Item (\mref{it:zinb-bialg2}) follows directly from Lemmas \mref{diff-zinb-algebra-ndend-2}, \mref{diff-coboundary} and Proposition \mref{O-Oper-2}.

In Item (\mref{it:zinb-bialg3}), for the commutative and
cocommutative differential ASI bialgebra
$(A\ltimes_{-L_{\diamond}^\ast}A^\ast, \cdot, \delta, D+Q^\ast,
Q+D^\ast)$, by the proof of Proposition \ref{pro:o-ybe}, $Q+D^\ast$ is a derivation on $(A\ltimes_{-L_{\diamond}^\ast}A^\ast, \cdot)$ if and only if the following conditions are satisfied.
\begin{eqnarray}\label{eq:conddd}
a\cdot (D+Q)(b)=0,\;\;L_{\diamond}((D+Q)(a))b=0,\;\;(D+Q)(L_{\diamond}(a)b)=0,\;\;\;\;a, b\in A.
\end{eqnarray}
Obviously, Eq. (\mref{eq:conddd}) holds if and only if
\vspb
  \begin{eqnarray}
 \label{cond-Zinb}
a\diamond D(b)=-a\diamond Q(b),\;\;D(a)\diamond b=-Q(a)\diamond
b,\;\;\;a, b\in A.
  \end{eqnarray}
Note that for an admissible differential Zinbiel
algebra $(A, \diamond, D, Q)$, Eq. (\mref{cond-Zinb}) holds if and only if $Q$ is a derivation on $(A, \diamond)$. Then this conclusion follows directly from Proposition \ref{pro:o-ybe} and Theorem \mref{bialgebra-constr}.
\end{proof}

\begin{rmk}
Without giving further details, we remark that combining Theorems \mref{thm-con-Lie-bi} and \mref{Constr-Nov-diff-Zinb-2} gives a natural construction of infinite-dimensional Lie bialgebras from admissible differential Zinbiel algebras.
\end{rmk}
 
We give an example where Theorem~\mref{Constr-Nov-diff-Zinb-2} can be applied to give a deformation family of Novikov bialgebras. Note that a classification of low dimensional Zinbiel algebras over $\CC$ has been achieved in~\mcite{AA}. So more examples can be obtained in the same say.
    \begin{ex} \mlabel{ex:zinb-bialg}
        Consider the Zinbiel algebra $(A=\CC e_1\oplus \CC e_2 \oplus \CC e_3, \diamond)$ from \mcite{AA}, whose non-zero product is given by
\vspb
        \begin{eqnarray*}
            e_1\diamond e_1=e_2, \;\;e_1\diamond e_2=2e_3,\;\; e_2\diamond e_1=e_3.
        \end{eqnarray*}
        Let $D: A\rightarrow A$ be the linear map given by
        \begin{eqnarray*}
            D(e_1)=e_1,\;\;D(e_2)=2e_2,\;\;D(e_3)=3e_3.
        \end{eqnarray*}
        Then $(A, \diamond, D)$ is a differential Zinbiel algebra. Define another linear map $Q: A\rightarrow A$ by
        \begin{eqnarray*}
            Q(e_1)=-e_1+e_3, Q(e_2)=-2e_2,\;\; Q(e_3)=-3e_3.
        \end{eqnarray*}
        Then $(A, \diamond, D, Q)$ is an admissible differential Zinbiel algebra with $Q$ a derivation on $(A,\diamond)$. Then by Theorem~\mref{Constr-Nov-diff-Zinb-2}, the deformation family of Novikov bialgebras $(A\ltimes_{L_{\rhd_{q}}^\ast+R_{\lhd_{q}}^\ast,
            -R_{\lhd_{q}}^\ast}A^\ast, \circ, \Delta)$ agrees with the deformation family of Novikov bialgebras $(A\oplus A^*,\circ_q,\Delta_q)$.
    \end{ex}

We end the paper with a discussion on the construction of Novikov bialgebras from admissible differential Zinbiel algebras that goes beyond Theorem~\mref{Constr-Nov-diff-Zinb-2}. We then illustrate this discussion by an example.

Let $(A,\diamond,D,Q)$ be an admissible differential Zinbiel algebra. Starting from the top-left corner of Diagram~\meqref{di:zinb-bialg} and taking the down-then-right path,  Theorem~\mref{Constr-Nov-diff-Zinb-2} \meqref{it:zinb-bialg1}  gives us a deformation family of Novikov bialgebras $(A\ltimes_{L_{\rhd_{q}}^\ast+R_{\lhd_{q}}^\ast,
    -R_{\lhd_{q}}^\ast}A^\ast, \circ, \Delta)$.
On the other hand, if we follow the right-then-down path of Diagram~\meqref{di:zinb-bialg}, first we have a commutative and cocommutative differential ASI bialgebra, which further gives rise to a triple $(A\oplus A^*,\circ_q,\Delta_q)$ by Eqs. (\mref{eq:k}) and  \meqref{eq:cons2}. Then for a given $q$, there can be three possibilities for the property of $(A\oplus A^*,\circ_q,\Delta_q)$:
\begin{enumerate}
\item $(A\oplus A^*,\circ_q,\Delta_q)$ is a Novikov bialgebra, and it coincides with the Novikov bialgebras $(A\ltimes_{L_{\rhd_{q}}^\ast+R_{\lhd_{q}}^\ast,
    -R_{\lhd_{q}}^\ast}A^\ast, \circ, \Delta)$;
\item $(A\oplus A^*,\circ_q,\Delta_q)$ is a Novikov bialgebra, but it does not coincide with the Novikov bialgebras $(A\ltimes_{L_{\rhd_{q}}^\ast+R_{\lhd_{q}}^\ast,
    -R_{\lhd_{q}}^\ast}A^\ast, \circ, \Delta)$;
\item $(A\oplus A^*,\circ_q,\Delta_q)$ is not a Novikov bialgebra.
\end{enumerate}

In the following example, we see that, for the same admissible commutative Zinbiel algebra, all the three possibilities can indeed occur, depending on the parameter $q\in \bfk$.

\begin{ex}\mlabel{examp-Novikov-2}
Let $(A, \diamond, D)$ be the differential Zinbiel algebra in Example~\mref{ex:zinb-bialg}.
Let $Q:A\rightarrow A$ be the linear map given by
\vspb
            \begin{eqnarray*}
            Q(e_1)=3e_1+e_3,\;\;Q(e_2)=2e_2,\;\;Q(e_3)=e_3.
\end{eqnarray*}
One directly checks that $(A, \diamond, D, Q)$ is an admissible
differential Zinbiel algebra where $Q$ is not a derivation on
$(A, \diamond)$.

We first follow Theorem~\mref{Constr-Nov-diff-Zinb-2} and provide
the steps in constructing a Novikov bialgebras following the down-then-right path in
Diagram~\meqref{di:zinb-bialg}.

Let $(A, \cdot, D, Q)$ be the descendent admissible  commutative differential algebra  of $(A, \diamond, D, Q)$. Its  nonzero product is given by
\vspb
\begin{eqnarray*}
e_1\cdot e_1=2e_2,\;\;e_1\cdot e_2=e_2\cdot e_1=3e_3.
\end{eqnarray*}
Then the nonzero product of the Novikov algebra $(A, \circ_{q})$ induced from $(A, \cdot, D, Q)$ is given by
\begin{eqnarray*}
e_1\circ_{q} e_1=2(1+3q)e_2,\;\;e_1\circ_{q} e_2=6(1+q)e_3,\;\;e_2\circ_{q} e_1=3(1+3q)e_3.
\end{eqnarray*}
Further the pre-Novikov algebra $(A, \lhd_{q}, \rhd_{q})$ induced from $(A, \diamond, D, Q)$ has its nonzero product given by
{\wuhao\begin{eqnarray*}
&&e_1\lhd_{q} e_1=(1+3q)e_2,\;\;e_1 \lhd_{q} e_2=2(1+q)e_3,\;\;e_2\lhd_{q} e_1=2(1+3q)e_3,\\
&& e_1\rhd_{q} e_1=(1+3q)e_2,\;\;e_1\rhd_{q} e_2=4(1+q)e_3,\;\; e_2\rhd_{q} e_1=(1+3q)e_3.
\end{eqnarray*}}
 Taking $\{e_1^\ast, e_2^\ast, e_3^\ast\}$ to be the basis of $A^\ast$ dual to $\{e_1, e_2, e_3\}$, then the nonzero product of the Novikov algebra $(A\ltimes_{L_{\rhd_{q}}^\ast+R_{\lhd_{q}}^\ast,
-R_{\lhd_{q}}^\ast}A^\ast,\circ)$ is given by
{\wuhao\begin{eqnarray*}
\label{eq:exx1}&&e_1\circ e_1=2(1+3q)e_2,\;\;e_1\circ e_2=6(1+q)e_3,\;\;e_2\circ e_1=3(1+3q)e_3,\\
&&e_1\circ e_2^\ast=-2(1+3q)e_1^\ast,\;\;e_2^\ast\circ e_1=(1+3q)e_1^\ast,\;\;e_1\circ e_3^\ast=-2(3+5q)e_2^\ast,\\
\label{eq:exx2}&&e_3^\ast\circ e_1=2(1+3q)e_2^\ast,\;\;e_2\circ e_3^\ast=-(3+5q)e_1^\ast,\;\;e_3^\ast\circ e_2=2(1+q)e_1^\ast.
\end{eqnarray*}}
Set $r=\sum_{i=1}^3(e_i\otimes e_i^\ast-e_i^\ast\otimes e_i)$. Then $r$ is an antisymmetric solution of the NYBE in $(A\ltimes_{L_{\rhd_{q}}^\ast+R_{\lhd_{q}}^\ast,
-R_{\lhd_{q}}^\ast}A^\ast, \circ)$. Then by Proposition \ref{Nov-coboundary}, there is a Novikov bialgebra $(A\ltimes_{L_{\rhd_{q}}^\ast+R_{\lhd_{q}}^\ast,
-R_{\lhd_{q}}^\ast}A^\ast, \circ, \Delta)$, where $\Delta$ is given by
\wuhao{\begin{eqnarray*}
&\Delta(e_1)=(1+3q)e_2\otimes e_1^\ast+2(1+q)e_3\otimes e_2^\ast-2(1+3q)e_1^\ast\otimes e_2-(3+5q)e_2^\ast\otimes e_3,\\
&\Delta(e_2)=2(1+3q)e_3\otimes e_1^\ast-2(3+5q)e_1^\ast\otimes e_3,\;\;\Delta(e_3)=\Delta(e_1^\ast)=0,\\
&\Delta(e_2^\ast)=2(1+3q)e_1^\ast\otimes e_1^\ast,\;\;\Delta(e_3^\ast)=6(1+q)e_1^\ast\otimes e_2^\ast+3(1+3q)e_2^\ast\otimes e_1^\ast.
\end{eqnarray*}}

On the other hand, the admissible differential Zinbiel algebra $(A, \diamond, D, Q)$, following the right arrow in Diagram~\meqref{di:zinb-bialg}, gives the commutative and cocommutative differential ASI bialgebra $(A\ltimes_{-L_{\diamond}^\ast}A^\ast, \cdot,
\delta, D+Q^\ast, Q+D^\ast)$. Its nonzero product and coproduct
are given by
\wuhao{\begin{eqnarray}
\label{eq:ex1}
&e_1\cdot e_1=2e_2,\;\;e_1\cdot e_2=e_2\cdot e_1=3e_3, \;\;e_1\cdot e_2^\ast=e_2^\ast\cdot e_1=e_1^\ast,&\\
&e_1\cdot e_3^\ast=e_3^\ast\cdot e_1=2e_2^\ast,\;\; e_2\cdot e_3^\ast=e_3^\ast\cdot e_2=e_1^\ast,&\\
&\delta(e_1)=e_2\otimes e_1^\ast+e_3\otimes e_2^\ast+e_1^\ast\otimes e_2+e_2^\ast\otimes e_3,\;\;
\delta(e_2)=2e_3\otimes e_1^\ast+2e_1^\ast\otimes e_3,&\\
&\delta(e_3)=\delta(e_1^\ast)=0,\;\;
\label{eq:ex2}\delta(e_2^\ast)=2e_1^\ast\otimes e_1^\ast,\;\;\delta(e_3^\ast)=3e_1^\ast\otimes e_2^\ast+3e_2^\ast\otimes e_1^\ast.&
\end{eqnarray}}
Moreover, we have
\vspb
\begin{eqnarray*}
&&D^\ast(e_1^\ast)=e_1^\ast,\;\;D^\ast(e_2^\ast)=2e_2^\ast,\;\; D^\ast(e_3^\ast)=3e_3^\ast,\;\;Q^\ast(e_1^\ast)=3e_1^\ast,\;\;Q^\ast(e_2^\ast)=2e_2^\ast,\;\;Q^\ast(e_3^\ast)=e_1^\ast+e_3^\ast.
\end{eqnarray*}
Therefore, the nonzero product $\circ_q$ of the Novikov algebra and the coproduct $\Delta_q$ of the Novikov coalgebra induced from  the commutative and cocommutative differential
ASI bialgebra $(A\ltimes_{-L_{\diamond}^\ast}A^\ast, \cdot,
\delta, D+Q^\ast, Q+D^\ast)$ by Eqs. (\mref{eq:k}) and  \meqref{eq:cons2} respectively are given by
\vspb
{\wuhao\begin{eqnarray*}
&e_1\circ_q e_1=2(1+3q)e_2,\;\;e_1\circ_q e_2=6(1+q)e_3,\;\;e_2\circ_q e_1=3(1+3q)e_3,\\
&e_1\circ_q e_2^\ast=2(1+q)e_1^\ast,\;\;e_2^\ast\circ_q e_1=(1+3q)e_1^\ast,\;\;e_1\circ_q e_3^\ast=2(1+3q)e_2^\ast,\\
&e_3^\ast\circ_q e_1=2(1+3q)e_2^\ast,\;\;e_2\circ_q e_3^\ast=(1+3q)e_1^\ast,\;\;e_3^\ast\circ_q e_2=2(1+q)e_1^\ast,\\
&\Delta_q(e_1)=(1+3q)e_2\otimes e_1^\ast+2(1+q)e_3\otimes e_2^\ast+2(1+q)e_1^\ast\otimes e_2+(1+3q)e_2^\ast\otimes e_3,\\
&\Delta_q(e_2)=2(1+3q)e_3\otimes e_1^\ast+2(1+3q)e_1^\ast\otimes e_3,\;\;\Delta_q(e_3)=\Delta_q(e_1^\ast)=0,\\
&\Delta_q(e_2^\ast)=2(1+3q)e_1^\ast\otimes e_1^\ast,\;\;\Delta_q(e_3^\ast)=6(1+q)e_1^\ast\otimes e_2^\ast+3(1+3q)e_2^\ast\otimes e_1^\ast.
\end{eqnarray*}}

Then one can apply Proposition~\mref{prostr1} and directly check that $(A\oplus A^\ast, \circ_q, \Delta_q)$ is a Novikov bialgebra if and only if $q=-\frac{1}{2}$ or $q=-1$.
Furthermore, $\circ=\circ_q$ and $\Delta=\Delta_q$ if and only if $q=-\frac{1}{2}$.

In summary, while the triple $(A\ltimes_{L_{\rhd_{q}}^\ast+R_{\lhd_{q}}^\ast,
-R_{\lhd_{q}}^\ast}A^\ast, \circ, \Delta)$ is a Novikov bialgebra for each $q\in \bfk$; for the triple $(A\oplus A^\ast, \circ_q, \Delta_q)$, there are the following three possibilities:
\begin{enumerate}
\item exactly when $q=-\frac{1}{2}$, the triple is a Novikov bialgebra that agrees with  $(A\ltimes_{L_{\rhd_{q}}^\ast+R_{\lhd_{q}}^\ast,
-R_{\lhd_{q}}^\ast}A^\ast, \circ, \Delta)$. Note that this also
provides the example promised in
    Remark~\mref{rmk:example}\,(\ref{it:a1}); 
\item exactly when $q=-1$, the triple is a Novikov bialgebra that does not agree with
$(A\ltimes_{L_{\rhd_{q}}^\ast+R_{\lhd_{q}}^\ast,
    -R_{\lhd_{q}}^\ast}A^\ast, \circ, \Delta)$.
Note that this also provides the example promised in
Remark~\mref{rmk:example}\,(\ref{it:a2}), since it gives a Novikov bialgebra without the condition that $Q$ is a derivation; 
\item exactly
in the remaining cases, the triple is not a Novikov bialgebra,
providing the examples promised in
    Remark~\mref{rmk:example}\,(\ref{it:a3}).
\end{enumerate}
\end{ex}

\noindent{\bf Acknowledgments.} This research is supported by
NSFC (12171129, 11931009, 12271265, 12261131498, 12326319), and the Fundamental Research Funds for the Central Universities and Nankai Zhide Foundation. We thank Guilai Liu for helpful discussions.

\smallskip

\noindent
{\bf Declaration of interests. } The authors have no conflicts of interest to disclose.

\smallskip

\noindent
{\bf Data availability. } Data sharing is not applicable to this article as no new data were created or analyzed in this study.

\vspace{-.5cm}

\end{document}